\newtheorem{theorem}{Theorem}[section]
\newtheorem{lemma}[theorem]{Lemma}
\newtheorem{proposition}[theorem]{Proposition}
\newtheorem{corollary}[theorem]{Corollary}
\theoremstyle{definition}
\newtheorem{definition}[theorem]{Definition}
\theoremstyle{remark}
\numberwithin{equation}{section}
\newcommand{\E}{e}
\newcommand{\I}{i}
\newcommand{\D}{{\rm d}}
\newcommand{\de}{\partial}
\begin{document}

\title[Hyperbolic series]{Functional relations for hyperbolic cosecant series}


\author{M. Buzzegoli}
\address{Universit\`a di  Firenze and INFN Sezione di Firenze, Florence, Italy}
\email{matteo.buzzegoli@unifi.it}
\thanks{Very useful discussions with F. Becattini and A. Palermo and helpful comments on
the manuscript of C. Dappiaggi are gratefully acknowledged.}


\subjclass[2010]{Primary 11L03; Secondary 11B68}
\keywords{Lambert series, Bernoulli, Norlund, Ramanujan, polynomials, series, q-series.}


\begin{abstract}
We study the function series
$\sum_{n=1}^\infty \phi^{2m+2} \text{cosch}^{2m+2}(n\phi/2)$,
and similar series,
for integers $m$ and
complex $\phi$. This hyperbolic series is linearly related to the
Lambert series. The Lambert series is known to satisfy a functional equation which defines the
Ramanujan polynomials. By using residue theorem (summation theorem) we find the functional equation
satisfied by this hyperbolic series. The functional equation identifies a class of polynomials which
can be seen as a generalization of the Ramanujan polynomials. These polynomials coincide with the
asymptotic expansion of the hyperbolic series at the origin and they all vanish for $\phi=\pm 2\pi\I$.
We furthermore derive several identities between Harmonic numbers and ordinary and generalized
Bernoulli polynomials.
\end{abstract}

\maketitle
\section{Introduction and main results}
\label{sec:intro}
Given an integer number $m\geq 0$ and a complex number $\phi$, we study the function series
\begin{equation*}
S_{2m+2}(\phi)=\sum_{n=1}^\infty \frac{\phi^{2m+2}}{\sinh^{2m+2}(n\phi/2)},
\end{equation*}
henceforth simply denoted as hyperbolic series. 
The hyperbolic series defines a function in $\phi$ that is analytic in any domain
not intersecting the imaginary axis, where the series diverges everywhere except that in the origin.
By using the summation theorem to write the sums on the series as a complex integral,
we find a functional relation satisfied by this series.
The functional relation separates the function into a polynomial part and into a part
that is not analytical in $\Re(\phi)=0$. The polynomial identified in this way coincides
with the asymptotic expansion of the hyperbolic series in $\phi=0$.

The functional relations are easily understood once we establish the connections between
the hyperbolic series and the Lambert series.
\begin{definition}[Lambert series]\label{def:Lambertserie}
We denote with $\mathcal{L}_q(s)$ the Lambert series of the type
\begin{equation}\label{eq:Lambertserie}
\mathcal{L}_q(s)=\sum_{k=1}^\infty \frac{k^s q^k}{1-q^k}\qquad s\in\mathbb{R},\,q\in\mathbb{C}
\end{equation}
with $|q|<1$.
\end{definition}
\begin{proposition}\label{prop:LambertRel_Intro}
For $m\geq 0$ integer and $\Re(\phi)\neq 0$, the hyperbolic series is the linear combination
of the Lambert series $\mathcal{L}_{q}(s)$,
\begin{equation}
\label{eq:LambertRel_0}
S_{2m+2}(\phi)=\frac{(2\phi)^{2m+2}}{(2m+1)!}\sum_{i=0}^{m} c_{2i+1}^{(m)}\,\mathcal{L}_{\E^{-\sigma \phi}}(2i+1),
\end{equation}
where $\sigma=$sgn$(\Re(\phi))$ and $c_i^{(m)}$ are real numbers given by:
\begin{equation*}
    c_{2i+1}^{(m)}=\frac{(2m+1)!}{(2i+1)!}\frac{B_{2m-2i}^{(2m+2)}(m+1)}{(2m-2i)!}
\end{equation*}
with $B_n^{(m)}(t)$ the generalized Bernoulli polynomial.
\end{proposition}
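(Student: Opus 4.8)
The plan is to reduce $S_{2m+2}$ to the Lambert building blocks $\mathcal{L}_q(s)$ by a direct resummation, and then to identify the resulting coefficients with the generalized Bernoulli numbers through a single polynomial identity. First I would fix $\Re(\phi)\neq 0$, set $\sigma=\mathrm{sgn}(\Re(\phi))$ and $q=\E^{-\sigma\phi}$, so that $|q|<1$. Writing $\sinh(n\phi/2)=\tfrac12(\E^{n\phi/2}-\E^{-n\phi/2})$ and pulling out the dominant exponential, one gets $\csch(n\phi/2)=\pm 2q^{n/2}/(1-q^{n})$, where the sign depends on $\sigma$ but becomes irrelevant after raising to the even power $2m+2$. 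Hence $\phi^{2m+2}\csch^{2m+2}(n\phi/2)=(2\phi)^{2m+2}\,q^{n(m+1)}/(1-q^{n})^{2m+2}$, uniformly in $\sigma$.

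Next I would expand each summand by the negative binomial series $w^{m+1}/(1-w)^{2m+2}=\sum_{k\geq1}\binom{k+m}{2m+1}w^{k}$ with $w=q^{n}$, which converges absolutely since $|q^{n}|<1$. Summing over $n$ and exchanging the two (absolutely convergent) sums gives
\begin{equation*}
S_{2m+2}(\phi)=(2\phi)^{2m+2}\sum_{k=1}^{\infty}\binom{k+m}{2m+1}\sum_{n=1}^{\infty}q^{nk}=(2\phi)^{2m+2}\sum_{k=1}^{\infty}\binom{k+m}{2m+1}\frac{q^{k}}{1-q^{k}}.
\end{equation*}
The binomial coefficient is the central factorial $\binom{k+m}{2m+1}=\frac{1}{(2m+1)!}\prod_{\ell=-m}^{m}(k+\ell)$, an \emph{odd} polynomial in $k$ of degree $2m+1$. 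Writing it as $\frac{1}{(2m+1)!}\sum_{i=0}^{m}c^{(m)}_{2i+1}k^{2i+1}$ and summing termwise turns each monomial $k^{2i+1}$ into $\mathcal{L}_q(2i+1)$, which already produces the right-hand side of \eqref{eq:LambertRel_0}, with $c^{(m)}_{2i+1}$ the monomial coefficients of the central factorial.

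It remains to identify these coefficients, and this is the step I expect to be the crux. I would prove the polynomial identity $\prod_{\ell=-m}^{m}(x+\ell)=B^{(2m+2)}_{2m+1}(m+1+x)$. The addition formula $B^{(\nu)}_{N}(a+x)=\sum_{n=0}^{N}\binom{N}{n}B^{(\nu)}_{n}(a)\,x^{N-n}$, applied with $N=2m+1$, $\nu=2m+2$, $a=m+1$, together with the vanishing of the odd-index polynomials $B^{(2m+2)}_{2j+1}(m+1)=0$ at the midpoint $m+1=\nu/2$, rewrites $B^{(2m+2)}_{2m+1}(m+1+x)$ as a sum over even $n=2m-2i$ only. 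On the other hand, the classical Nörlund evaluation $B^{(N)}_{N-1}(t)=\prod_{r=1}^{N-1}(t-r)$ gives, for $N=2m+2$, $B^{(2m+2)}_{2m+1}(m+1+x)=\prod_{r=1}^{2m+1}(m+1+x-r)=\prod_{\ell=-m}^{m}(x+\ell)$ after the reindexing $\ell=m+1-r$, which establishes the identity.

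Reading off the coefficient of $x^{2i+1}$ (i.e. $n=2m-2i$) then yields $c^{(m)}_{2i+1}=\binom{2m+1}{2i+1}B^{(2m+2)}_{2m-2i}(m+1)=\frac{(2m+1)!}{(2i+1)!}\frac{B^{(2m+2)}_{2m-2i}(m+1)}{(2m-2i)!}$, which is exactly the asserted value and completes the proof. The only genuine difficulty is assembling the two Bernoulli facts correctly; the resummation in the first two paragraphs is routine once the absolute convergence for $|q|<1$ is noted.
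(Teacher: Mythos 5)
Your proof is correct, and it reaches the coefficient formula by a genuinely different route than the paper. The resummation half (writing $\csch^{2m+2}(n\phi/2)$ as $2^{2m+2}q^{n(m+1)}/(1-q^n)^{2m+2}$ with $q=\E^{-\sigma\phi}$, expanding by the negative binomial series, and swapping the absolutely convergent sums) is essentially the paper's Lemma~\ref{lem:expseries} and Proposition~\ref{prop:LambertRel}, though your device of absorbing both signs of $\Re(\phi)$ into $q$ at the outset is slightly cleaner than the paper's appeal to evenness of $S_{2m+2}$ in $\phi$. The crux --- identifying the monomial coefficients of $\prod_{\ell=-m}^{m}(k+\ell)$ with $\binom{2m+1}{2i+1}B^{(2m+2)}_{2m-2i}(m+1)$ --- is where you diverge: the paper obtains this from Theorem~\ref{thm:gammaratiorepgen}, the exact Tricomi--Erd\'elyi/Luke expansion of $\Gamma(z+\alpha)/\Gamma(z-\beta)$, proved via a Hankel-type contour integral in which the generating function of the generalized Bernoulli polynomials is recognized under the integral sign. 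You instead combine three purely algebraic facts: N\"orlund's closed form $B^{(N)}_{N-1}(t)=\prod_{r=1}^{N-1}(t-r)$, the Appell addition theorem $B^{(\nu)}_{N}(a+x)=\sum_{n}\binom{N}{n}B^{(\nu)}_{n}(a)x^{N-n}$, and the vanishing $B^{(2m+2)}_{2j+1}(m+1)=0$ coming from evenness of $\bigl(z/(2\sinh(z/2))\bigr)^{2m+2}$ (a fact the paper also uses, but only in the appendix). Your route is more elementary --- no complex analysis at all, and for integer parameters your identity $\prod_{\ell=-m}^m(x+\ell)=B^{(2m+2)}_{2m+1}(m+1+x)$ is exactly the content of the paper's Theorem~\ref{thm:gammaratiorepgen} specialized to $\alpha=m+1$, $\beta=m$. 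What the paper's heavier machinery buys is reusability: the same gamma-ratio theorem handles the even-index coefficients $d^{(m)}_{2i}$ for $S^{(\sinh,1)}_{2m+2}$ and feeds the chain of Bernoulli/Harmonic-number identities in Section~\ref{sec:IdenBer}, which your ad hoc identity would have to be re-derived to cover.
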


\begin{definition}[Generalized Bernoulli polynomials]\label{def:bernpoly}
For integers $n$ and $m$ and real $t$, we denote with $B_n$ the Bernoulli number, with $B_n(t)$ the Bernoulli polynomial and with
$B_n^{(m)}(t)$ the generalized Bernoulli polynomial \cite[p. 145]{Norlund1924} (see also
\cite{LukeVol1}), defined respectively by the following exponential generating functions:
\begin{align*}\label{eq:bernpoly}
\frac{x}{\E^x-1}&=\sum_{n=0}^\infty \frac{B_n}{n!} \,x^n\quad |x|<2\pi\, ;\qquad
\frac{x\,\E^{t\, x}}{\E^x-1}=\sum_{n=0}^{\infty } \frac{B_n(t) x^n}{n!}\quad |x|<2\pi\, ;\\
\frac{x^m}{(\E^x-1)^m} \E^{t\, x} &=\sum_{n=0}^{\infty } \frac{B_n^{(m)}(t)\, x^n}{n!}\quad |x|<2\pi\, .
\end{align*}
\end{definition}
Some functional relations for the Lambert series were given by Ramanujan~\cite{RamNotebook}
and later proved by several authors, see~\cite{BerndtVol2} and reference therein.
When $s$ is a negative odd integer $s=-2m-1$ with $m=0,1,2,\dots$ and  for any $\Re(\phi)>0$, the Lambert series
satisfies the functional relation~\cite[Entry 21(i) Chapter 14]{BerndtVol2}
\begin{equation}
\label{eq:LambertRelNegOdd}
\begin{split}
\mathcal{L}_{\E^{-\phi}}(-2m-1)=&(-1)^m\left(\frac{\phi}{2\pi}\right)^{2m}\mathcal{L}_{\E^{-4\pi^2/\phi}}(-2m-1)+\\
&-\frac{1}{2}\left[1-(-1)^m\frac{\phi^{2m}}{(2\pi)^m}\right]\zeta(2m+1)+\frac{1}{2^{2m+2}\phi}\mathcal{R}_{2m+2}(\phi),
\end{split}
\end{equation}
where $\zeta$ is the Riemann zeta and $\mathcal{R}$ is the Ramanujan polynomial~\cite{RamPoly} defined as
\begin{equation}
\label{eq:RamPol}
\mathcal{R}_{2m+2}(\phi)=-2^{2m+1}\sum_{k=0}^{m+1} (2\pi\I)^{2k} \frac{B_{2k}}{(2k)!}
    \frac{B_{2m+2-2k}}{(2m+2-2k)!}\phi^{2m+2-2k},
\end{equation}
with $B_j$ the $j$-th Bernoulli number.
Taking advantage of the identity
\begin{equation*}
    \coth x=1+ \frac{2}{\E^{2x}-1}
\end{equation*}
we can also write the Ramanujan identity~(\ref{eq:LambertRelNegOdd}) as an identity for hyperbolic cotangent series
\begin{equation*}
\begin{split}
\sum_{n=1}^\infty \frac{\coth(n\phi/2)}{n^{2m+1}}=&(-1)^m\left(\frac{\phi}{2\pi}\right)^{2m}
    \sum_{n=1}^\infty \frac{\coth(4\pi^2 n/2\phi)}{n^{2m+1}}+\\
&+\frac{1}{2^{2m+1}\phi}\mathcal{R}_{2m+2}(\phi).
\end{split}
\end{equation*}

In contrast to this series, the hyperbolic series $S_{2m+2}$ is related to the Lambert
series with positive argument and, for any integer $m\geq 0$, the functional relation for
the Lambert series with $s=2m+1$ becomes~\cite[Entry 13 Chapter 14]{BerndtVol2}
\begin{equation}
\label{eq:LambertRelPosOdd}
\begin{split}
\mathcal{L}_{\E^{-\phi}}(2m+1)=&-(-1)^m\left(\frac{2\pi}{\phi}\right)^{2m+2}\mathcal{L}_{\E^{-4\pi^2/\phi}}(2m+1)
-\frac{\delta_{m,0}}{2\phi}+\\
&+\frac{1}{2}\frac{B_{2m+2}}{2m+2}\left[1+\frac{(-1)^m(2\pi)^{2m+2}}{\phi^{2m+2}}\right].
\end{split}
\end{equation}
We can then plug relations~(\ref{eq:LambertRelPosOdd}) into~(\ref{eq:LambertRel_0}) to find functional
equations relating the hyperbolic series.
\begin{theorem}\label{thm:funcrelS}
The series $S_{2m+2}(\phi)$ for any integer $m\geq 0$ and any complex $\phi$ such that $\Re(\phi)\neq 0$
satisfy the functional relation
\begin{equation}
\label{eq:funcrelS}
S_{2m+2}(\phi)-\sum_{i=0}^m\mathcal{S}_i^{(m)}(\phi)S_{2i+2}\left(\frac{4\pi^2}{\phi}\right)=
\mathcal{B}_{2m+2}(\phi)+(-1)^{m+1}\frac{4\sigma}{m+2}\phi^{2m+1},
\end{equation}
where $\sigma=$sgn$(\Re(\phi))$, $\mathcal{S}_i^{(m)}$ are polynomials on $\phi$ of degree $2m+2$ and
$\mathcal{B}_{2m+2}(\phi)$ is the polynomial
\begin{equation}
\label{eq:PolynB}
\mathcal{B}_{2m+2}(\phi)= -2^{2m+1}\sum_{k=0}^{m+1}(2\pi\I)^{2k}\frac{B_{2k}}{(2k)!}
	\frac{B_{2m+2-2k}^{(2m+2)}(m+1)}{(2m+2-2k)!} \, \phi^{2m+2-2k}.    
\end{equation}
\end{theorem}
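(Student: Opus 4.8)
The plan is to substitute the Lambert functional relation~\eqref{eq:LambertRelPosOdd} directly into the decomposition provided by Proposition~\ref{prop:LambertRel_Intro}. First I would reduce to the case $\Re(\phi)>0$, so that $\sigma=1$; the case $\Re(\phi)<0$ then follows because $S_{2m+2}$ is manifestly even in $\phi$ (the summand $\phi^{2m+2}/\sinh^{2m+2}(n\phi/2)$ is invariant under $\phi\mapsto-\phi$, since $\sinh$ is odd and the exponent is even), which is precisely what reproduces the factor $\sigma$ in front of the last term. With $\sigma=1$ fixed, Proposition~\ref{prop:LambertRel_Intro} writes $S_{2m+2}(\phi)=\frac{(2\phi)^{2m+2}}{(2m+1)!}\sum_{i=0}^m c_{2i+1}^{(m)}\mathcal{L}_{\E^{-\phi}}(2i+1)$, and replacing each $\mathcal{L}_{\E^{-\phi}}(2i+1)$ through~\eqref{eq:LambertRelPosOdd} splits the result into four groups: a ``dual'' part proportional to $\mathcal{L}_{\E^{-4\pi^2/\phi}}(2i+1)$, the two Bernoulli-number pieces (one constant in $\phi$, one carrying a factor $\phi^{-2i-2}$), and the lone Kronecker term $\delta_{i,0}/(2\phi)$ arising from $i=0$.

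For the dual part I would re-assemble the transformed Lambert series back into hyperbolic series evaluated at $4\pi^2/\phi$. Since $\Re(4\pi^2/\phi)>0$ whenever $\Re(\phi)>0$, Proposition~\ref{prop:LambertRel_Intro} applies again and expresses each $S_{2i+2}(4\pi^2/\phi)$ as a linear combination of the same family $\{\mathcal{L}_{\E^{-4\pi^2/\phi}}(2j+1)\}_{j\le i}$. Because this linear map is triangular with unit leading coefficient ($c_{2i+1}^{(i)}=B_0^{(2i+2)}(i+1)=1$), it is invertible, so the dual Lambert series can be rewritten uniquely as $\sum_{i=0}^m \mathcal{S}_i^{(m)}(\phi)\, S_{2i+2}(4\pi^2/\phi)$; matching coefficients determines the $\mathcal{S}_i^{(m)}$, and the scaling $(4\pi^2/\phi)^{2i+2}$ against the prefactor $\phi^{2m+2}$ shows each of them is a polynomial of degree $2m+2$ in $\phi$.

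It remains to show that the three remaining groups collapse to $\mathcal{B}_{2m+2}(\phi)+(-1)^{m+1}\frac{4}{m+2}\phi^{2m+1}$, and here I would organize the Bernoulli pieces by powers of $\phi$: the constant pieces all feed the top degree $\phi^{2m+2}$ (the $k=0$ term of~\eqref{eq:PolynB}), the $\phi^{-2i-2}$ pieces, after the reindexing $k=i+1$, reproduce exactly the monomials $(2\pi)^{2k}\phi^{2m+2-2k}$ for $1\le k\le m+1$ appearing in~\eqref{eq:PolynB} with the sign absorbed into $(2\pi\I)^{2k}$, and the $\delta_{i,0}/(2\phi)$ term yields the residual $\phi^{2m+1}$. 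I expect the main obstacle to lie precisely in this last matching: verifying that the assembled coefficients genuinely coincide with those of $\mathcal{B}_{2m+2}$ and that the $\delta$ term produces exactly $(-1)^{m+1}\frac{4}{m+2}\phi^{2m+1}$ reduces to closed-form evaluations of the generalized Bernoulli polynomials at the midpoint $t=m+1$---in particular to an identity of the shape $B_{2m}^{(2m+2)}(m+1)=(-1)^{m+1}(2m)!/((m+2)2^{2m-1})$ together with the sum rule linking $\sum_i c_{2i+1}^{(m)}B_{2i+2}/(2i+2)$ to $B_{2m+2}^{(2m+2)}(m+1)$. These are exactly the generalized-Bernoulli identities developed separately in the paper, and I anticipate that they, rather than the structural bookkeeping, will carry the real weight of the argument.
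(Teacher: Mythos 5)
Your proposal follows the same route as the paper's second proof of Theorem~\ref{thm:funcrelS} (the proof ``from Lambert series''): restrict to $\Re(\phi)>0$ using evenness, substitute the Lambert functional equation~(\ref{eq:LambertRelPosOdd}) into the decomposition~(\ref{eq:LambertRel_0}), invert the triangular linear relation of Proposition~\ref{prop:LambertRel_Intro} so that the dual Lambert series re-assemble into $\sum_i\mathcal{S}_i^{(m)}(\phi)S_{2i+2}(4\pi^2/\phi)$ (this is exactly the paper's step (\ref{eq:InvertionL})--(\ref{eq:PolSDef})), and identify the leftover Bernoulli pieces with $\mathcal{B}_{2m+2}(\phi)$. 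Your ``sum rule linking $\sum_i c_{2i+1}^{(m)}B_{2i+2}/(2i+2)$ to $B_{2m+2}^{(2m+2)}(m+1)$'' is precisely the identity~(\ref{eq:IdentZeros}), which the paper proves in Section~\ref{sec:gammarep} and applies through Propositions~\ref{prop:AsymS} and~\ref{prop:AltPolB}; your degree count for the $\mathcal{S}_i^{(m)}$ also matches the paper's. Up to this point the proposal is sound and structurally identical to the paper.

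The gap is in your treatment of the $\delta_{i,0}$ term. The identity you posit, $B_{2m}^{(2m+2)}(m+1)=(-1)^{m+1}(2m)!/((m+2)2^{2m-1})$, is false: the paper's own Proposition~\ref{Prop:BernHarmonicIdent} gives $B_{2m}^{(2m+2)}(m+1)=(-1)^m(m!)^2/(2m+1)$, equivalently $c_1^{(m)}=(-1)^m(m!)^2$, which one can also see directly because $c_1^{(m)}$ is the coefficient of $k$ in $k\prod_{j=1}^m(k^2-j^2)$, namely $\prod_{j=1}^m(-j^2)$. With this value the Kronecker term contributes
\begin{equation*}
-\frac{(2\phi)^{2m+2}}{(2m+1)!}\,\frac{c_1^{(m)}}{2\phi}
=(-1)^{m+1}\,\frac{2^{2m+1}(m!)^2}{(2m+1)!}\,\phi^{2m+1},
\end{equation*}
and $2^{2m+1}(m!)^2/(2m+1)!$ equals $4/(m+2)$ only for $m=0,1$ (for $m=2$ it is $16/15$, not $1$). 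So no Bernoulli identity can close your final step: the two quantities are genuinely different numbers for $m\ge 2$. Note that this discrepancy is inherited from the paper itself, whose two proofs assert the factor $4/(m+2)$ (once as the value of $\sigma\phi^{2m+1}\int\sinh^{-2m-2}(y)\,\D y$, once as this same $\delta$ term) but verify it only for $m=0,1$; the reduction formula for that integral actually yields $(-1)^{m+1}2^{2m+1}(m!)^2/(2m+1)!$ as well, in agreement with the Lambert route. The honest conclusion of your argument is therefore the functional relation with $(-1)^{m+1}\sigma\,2^{2m+1}(m!)^2\,\phi^{2m+1}/(2m+1)!$ in place of $(-1)^{m+1}4\sigma\phi^{2m+1}/(m+2)$; you should state that coefficient rather than force agreement with the printed one by means of a false identity.
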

We do not have a general expression for the polynomials $\mathcal{S}_i^{(m)}$ but we
give the procedure to find them.
The main difference with Lambert functional equations is that the hyperbolic
function $S_{2m+2}(\phi)$ is related not just to $S_{2m+2}(4\pi^2/\phi)$ but to all functions
$S_{2i+2}(4\pi^2/\phi)$ with $i=0,1,\dots,m$.

Similarly to functional equation~(\ref{eq:LambertRelNegOdd}), the~(\ref{eq:funcrelS})
defines a class of polynomials which can be seen as a generalization of Ramanujan polynomials.
Indeed the form of the polynomials~(\ref{eq:PolynB}) suggests to define the generalization of
Ramanujan polynomial as
\begin{equation*}
\mathcal{R}^{(s,r)}_{2m+2}(\phi)= -2^{2m+1}\sum_{k=0}^{m+1}(2\pi\I)^{2k}\frac{B_{2k}}{(2k)!}
	\frac{B_{2m+2-2k}^{(2s+r)}(s)}{(2m+2-2k)!} \, \phi^{2m+2-2k}.
\end{equation*}
It follows immediately from the generalized Bernoulli properties that the generalized Ramanujan polynomials
reduce to ordinary Ramanujan polynomials and to $\mathcal{B}$ polynomials for the following choice of
parameters:
\begin{equation*}
\mathcal{R}^{(0,1)}_{2m+2}(\phi)=\mathcal{R}_{2m+2}(\phi),\quad
\mathcal{R}^{(m+1,0)}_{2m+2}(\phi)=\mathcal{B}_{2m+2}(\phi).
\end{equation*}

Notice that each term of the series $S_{2i+2}(4\pi^2/\phi)$ and the last
term in~(\ref{eq:funcrelS}) are non analytical in $\phi=0$. Then, the functional
relation~(\ref{eq:funcrelS}) identifies the polynomial $\mathcal{B}_{2m+2}$ as
the only analytical part of the hyperbolic series and it can be used to assign
a value to the series for imaginary $\phi$. The study of this series and its
analytical part is also motivated by a recent application in Physics.
Exact solutions for thermal states of a quantum relativistic fluid in the presence
of both acceleration and rotation are found by extracting the analytic part
of a function series that do not converge in the whole complex plane~\cite{Becattini:2020qol}.
In particular, it was shown in ref.~\cite{Becattini:2020qol} that every thermal distribution
quantity of an accelerating fluid composed by non-interacting massless scalar particles
can be given as a linear combination of the polynomials~(\ref{eq:PolynB}) derived form the
hyperbolic series discussed here.

The paper is organized as follows.
In Section~\ref{sec:basic} we consider three types of hyperbolic series, we study
their uniform convergence, and we show that they are not analytic in the imaginary axis.
In Proposition~\ref{prop:LambertRel} we establish a linear relation between the hyperbolic
series and the Lambert series. In Sec.~\ref{sec:gammarep}
we show that the linearity coefficients of proposition~\ref{prop:LambertRel} are given by special values
of the generalized Bernoulli polynomials. This is done by relating the ratio of two gamma functions
to the generalized Bernoulli polynomials (Theorem~\ref{thm:gammaratiorepgen}).
In Sec.~\ref{sec:IdenBer} we take advantage of Theorem~\ref{thm:gammaratiorepgen}
to derive several identities involving ordinary and generalized Bernoulli
polynomials as well as Bell polynomials and Harmonic numbers. In Sec.~\ref{sec:FuncRel} we first prove
the functional relation satisfied by Lambert series and then we prove Theorem~\ref{thm:funcrelS}.
We also show that the polynomial $\mathcal{B}$ corresponds
to the asymptotic expansion of $S_{2m+2}$ at the origin and we analyze the zeros of the polynomials.

\section{Function series with hyperbolic cosecant}
\label{sec:basic}
The results quoted in Section \ref{sec:intro} can be extended to the following class of hyperbolic series.
\begin{definition}[Hyperbolic series]\label{def:baseserie}
Given $m$ and $\gamma$ two integers such that $m\geq 0$ and $\gamma<m+1$ and given $\phi$ a complex number
such that $\Re(\phi)\neq 0$, we refer to hyperbolic series as the following function series involving the
hyperbolic cosecant:
\begin{align}\label{eq:baseserie_cosh}
S^{(\gamma)}_{2m+2}(\phi)&=\sum_{n=1}^\infty \frac{\phi^{2m+2}\cosh^\gamma(n\,\phi)}{\sinh^{2m+2}(n\phi/2)},\\
\label{eq:baseserie_sinh}
S^{(\sinh,\gamma)}_{2m+2}(\phi)&=\sum_{n=1}^\infty \frac{\phi^{2m+2}\sinh^\gamma(n\,\phi)}{\sinh^{2m+2}(n\phi/2)},\\
\label{eq:baseserie}
S_{2m+2}(\phi)&=\sum_{n=1}^\infty \frac{\phi^{2m+2}}{\sinh^{2m+2}(n\phi/2)},
\end{align}
where $S_{2m+2}(\phi)$ is the series discussed in Section \ref{sec:intro} and it corresponds to $S^{(\gamma)}_{2m+2}(\phi)$
in Eq. (\ref{eq:baseserie_cosh}) with $\gamma=0$.
\end{definition}
In this section we show that these series are analytic functions in $\phi$ in the whole complex plane
except the imaginary axis, that they are continuous function in the real axis but that they are not real
analytic in any open set containing the zero. These properties are inherited directly from Lambert series
thanks to linearity relation~(\ref{eq:LambertRel_0}).
Furthermore, we do not need to study all of the three type of hyperbolic function defined above.
Indeed, the series (\ref{eq:baseserie_cosh}) and (\ref{eq:baseserie_sinh}) are linear combination of $S_{2m+2}(\phi)$ and
$S^{(\sinh,1)}_{2m+2}(\phi)$.
\begin{proposition}\label{prop:redundant}
The series (\ref{eq:baseserie_cosh}) and (\ref{eq:baseserie_sinh}) are linear combination
of $S_{2m+2}(\phi)$ and $S^{(\sinh,1)}_{2m+2}(\phi)$. Set $\gamma=2p$ if $\gamma$ is even,
otherwise $\gamma=2p+1$, then
\begin{align*}
S^{(\gamma)}_{2m+2}(\phi)&=\sum_{l=0}^{\gamma} \genfrac(){0pt}{0}{\gamma}{l} 2^l\, S_{2(m-l)+2}(\phi);\\
S^{(\sinh,2p)}_{2m+2}(\phi)&=4^p\sum_{l=0}^{p} \genfrac(){0pt}{0}{p}{l} \, S_{2(m+l-2p)+2}(\phi);\\
S^{(\sinh,2p+1)}_{2m+2}(\phi)&= 4^p\sum_{l=0}^{p} \genfrac(){0pt}{0}{p}{l} \, S^{(\sinh,1)}_{2(m+l-2p)+2}(\phi).
\end{align*}
\end{proposition}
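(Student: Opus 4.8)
The plan is to reduce all three identities to the defining series \eqref{eq:baseserie} (and, in the odd case, to $S^{(\sinh,1)}_{2m+2}$) by rewriting every hyperbolic function of the argument $n\phi$ as a polynomial in $\sinh(n\phi/2)$ through the double-angle formulas, and then applying the binomial theorem inside the generic summand. The identities I would use throughout are
\[
\cosh(n\phi)=1+2\sinh^2(n\phi/2),\qquad \sinh(n\phi)=2\sinh(n\phi/2)\cosh(n\phi/2),
\]
together with $\cosh^2(n\phi/2)=1+\sinh^2(n\phi/2)$. Since each expansion is a \emph{finite} sum performed before summing over $n$, the manipulation only requires that each of the finitely many resulting series converge, which is exactly what the convergence analysis of this section provides under the hypothesis $\gamma<m+1$.

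For the first identity I would write $\cosh^\gamma(n\phi)=\bigl(1+2\sinh^2(n\phi/2)\bigr)^\gamma=\sum_{l=0}^{\gamma}\binom{\gamma}{l}2^l\sinh^{2l}(n\phi/2)$, divide by $\sinh^{2m+2}(n\phi/2)$, and observe that the $l$-th term is a constant multiple of $\sinh^{-(2m+2-2l)}(n\phi/2)=\sinh^{-(2(m-l)+2)}(n\phi/2)$. Summing over $n$ and collecting the power of $\phi$ that converts the fixed prefactor $\phi^{2m+2}$ into the prefactor $\phi^{2(m-l)+2}$ of the base series exhibits each term as a multiple of $S_{2(m-l)+2}(\phi)$, which is the first relation.

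For the even sinh case I would use $\sinh^{2p}(n\phi)=4^p\sinh^{2p}(n\phi/2)\cosh^{2p}(n\phi/2)$ and then expand $\cosh^{2p}(n\phi/2)=(1+\sinh^2(n\phi/2))^p$; after dividing by $\sinh^{2m+2}(n\phi/2)$ each term is again a negative even power of $\sinh(n\phi/2)$, reducing to a base series. The odd case is handled identically after factoring $\sinh^{2p+1}(n\phi)=\sinh(n\phi)\,\sinh^{2p}(n\phi)$: the surviving single factor $\sinh(n\phi)$ is precisely what prevents the terms from collapsing to $S$, so they instead reduce to the series $S^{(\sinh,1)}_{2(m-p-l)+2}(\phi)$. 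In both sinh cases the summation index stated in the proposition is reached after the substitution $l\mapsto p-l$, which turns the exponent $2(m-p-l)+2$ produced by the expansion into $2(m+l-2p)+2$.

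The only genuine obstacle is the bookkeeping: one must track simultaneously the exponent of $\sinh(n\phi/2)$ left in the denominator and the compensating power of $\phi$ carried by the fixed prefactor, and then relabel the summation to match the stated range. The hypothesis $\gamma<m+1$ enters exactly here, since it guarantees that every exponent $2(m-l)+2$ (respectively $2(m-p-l)+2$) arising in the expansions is at least $2$, so that each term really is one of the admissible base series and converges for $\Re(\phi)\neq 0$.
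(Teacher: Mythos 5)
Your proposal is correct and follows essentially the same route as the paper's proof: expand the hyperbolic functions of argument $n\phi$ in powers of $\sinh(n\phi/2)$ via the double-angle identities, apply the binomial theorem inside each summand, and reindex the finite sum; your factorization $\sinh^{2p}(n\phi)=4^p\sinh^{2p}(n\phi/2)\cosh^{2p}(n\phi/2)$ with $\cosh^2(n\phi/2)=1+\sinh^2(n\phi/2)$ is identical, after expansion, to the paper's use of $\sinh^2(n\phi)=4\sinh^2(n\phi/2)+4\sinh^4(n\phi/2)$. If anything, you are more explicit than the paper about the bookkeeping, namely the residual powers of $\phi$ that each term carries when matched against the prefactor of the base series, and about the role of the hypothesis $\gamma<m+1$ in keeping every resulting exponent admissible.
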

\begin{proof}
This is easily proved by taking advantage of hyperbolic function relations.
Indeed, using the relation $\cosh(n\phi)=2\sinh^2(n\phi/2) +1$ and the binomial expansion we obtain
\begin{equation*}
\begin{split}
S^{(\gamma)}_{2m+2}(\phi)&=\sum_{n=1}^\infty \frac{\phi^{2m+2}\left( 2\sinh^2(n\phi/2)+1\right)^\gamma}{\sinh^{2m+2}(n\phi/2)}
=\sum_{n=1}^\infty\sum_{l=0}^{\gamma} \genfrac(){0pt}{0}{\gamma}{l}\frac{\phi^{2m+2}2^l\sinh^{2l}(n\phi/2)}{\sinh^{2m+2}(n\phi/2)}\\
&= \sum_{l=0}^{\gamma} \genfrac(){0pt}{0}{\gamma}{l} 2^l\, S_{2(m-l)+2}(\phi).
\end{split}
\end{equation*}
In the same way, taking advantage of
\begin{equation*}
\sinh^2(n\phi)=\cosh^2(n\phi)-1=4\sinh^2(n\phi/2)+4\sinh^4(n\phi/2),
\end{equation*}
the series $S^{(\sinh,\gamma)}_{2m+2}(\phi)$ for an even $\gamma=2p$ can be written as
\begin{equation*}
\begin{split}
S^{(\sinh,2p)}_{2m+2}(\phi)&=
\sum_{n=1}^\infty \frac{\phi^{2m+2}4^p\left(\sinh^2(n\phi/2)+\sinh^4(n\phi/2)\right)^p}{\sinh^{2m+2}(n\phi/2)}\\
&=\sum_{n=1}^\infty\sum_{l=0}^{p} \genfrac(){0pt}{0}{p}{l}\frac{\phi^{2m+2}4^p\sinh^{4p-2l}(n\phi/2)}{\sinh^{2m+2}(n\phi/2)}\\
&= 4^p\sum_{l=0}^{p} \genfrac(){0pt}{0}{p}{l} \, S_{2(m+l-2p)+2}(\phi).
\end{split}
\end{equation*}
The case with odd $\gamma=2p+1$ is the same as the previous equation multiplied by
$\sinh(n\phi)$, thus leading to the result written above.
\end{proof}
Hence, from now on we only consider $S_{2m+2}(\phi)$ and $S^{(\sinh,1)}_{2m+2}(\phi)$.
All their properties are transported to the others hyperbolic series thanks to
these linear relations. Moving on now to find the relation with Lambert series,
we have first to write the hyperbolic series just in terms of exponential functions.
\begin{lemma}\label{lem:expseries}
For any integer $m\geq 0$ and complex number $\phi$ such that $\Re(\phi)\neq 0$, the hyperbolic series are represented by
\begin{align}
\label{eq:expseries_1}
S_{2m+2}(\phi)&=(2\phi)^{2m+2}\sum_{k=0}^\infty \genfrac(){0pt}{0}{k+m}{k-m-1}
    \frac{\E^{-\sigma k\phi}}{1-\E^{-\sigma k\phi}},\\
\label{eq:expseries_2}
S^{(\sinh,1)}_{2m+2}(\phi)&= (2\sigma\phi)^{2m+1} \phi \sum_{k=0}^\infty
	\left[ \genfrac(){0pt}{0}{k+m-1}{k-m-1}+\genfrac(){0pt}{0}{k+m}{k-m}\right]
	\frac{\E^{-\sigma k\phi}}{1-\E^{-\sigma k\phi}};
\end{align}
where $\sigma$ is the sign of the real part of $\phi$.
\end{lemma}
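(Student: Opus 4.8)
The plan is to reduce both series to the exponential (Lambert-type) form by expanding the hyperbolic cosecant as a negative binomial series and then summing the resulting geometric series over $n$. First I would dispose of the sign $\sigma$ once and for all. Writing $x=n\phi/2$ and setting $a=\sigma x$, one has $\Re(a)=n|\Re(\phi)|/2>0$ for every $n\ge1$, while the oddness of $\sinh$ gives $\sinh^{2m+2}(x)=\sinh^{2m+2}(a)$ and $\sinh(2x)=\sigma\,\sinh(2a)$. This lets me work uniformly in the half-plane $\Re(a)>0$ and reinstate the single residual factor of $\sigma$ at the very end.

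The key expansion is the negative binomial series. Factoring $\E^{a}-\E^{-a}=\E^{a}(1-\E^{-2a})$ and using $(1-y)^{-(2m+2)}=\sum_{j\ge0}\binom{j+2m+1}{j}y^{j}$ with $y=\E^{-2a}$ (valid since $|\E^{-2a}|<1$ for $\Re(a)>0$), I obtain
\[\frac{1}{\sinh^{2m+2}(a)}=2^{2m+2}\sum_{j=0}^\infty\binom{j+2m+1}{j}\,\E^{-(2m+2+2j)a}.\]
Substituting $2a=\sigma n\phi$ and multiplying by $\phi^{2m+2}$, the series $S_{2m+2}(\phi)$ becomes a double sum over $n$ and $j$. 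Because $\Re(\sigma\phi)>0$ each term decays geometrically, so the double sum converges absolutely and I may rearrange it; summing the geometric series $\sum_{n\ge1}\E^{-\sigma(m+1+j)n\phi}=\E^{-\sigma(m+1+j)\phi}/(1-\E^{-\sigma(m+1+j)\phi})$ and reindexing by $k=m+1+j$ turns $\binom{j+2m+1}{j}$ into $\binom{k+m}{k-m-1}$. Since the latter vanishes for $0\le k\le m$, the effective sum starts at $k=m+1$ and, absorbing $\phi^{2m+2}2^{2m+2}=(2\phi)^{2m+2}$, I recover \eqref{eq:expseries_1} with the summation written from $k=0$.

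For $S^{(\sinh,1)}_{2m+2}(\phi)$ I would multiply the same cosecant expansion by $\sinh(2a)=\tfrac12(\E^{2a}-\E^{-2a})$, carrying the extra factor $\sigma$ from $\sinh(2x)=\sigma\sinh(2a)$. This produces a difference of exponential series, with terms $\E^{-(2m+2j)a}$ and $\E^{-(2m+4+2j)a}$; after $2a=\sigma n\phi$ and reindexing both pieces to the common power $\E^{-\sigma k n\phi}$, the coefficient of $\E^{-\sigma kn\phi}$ is $\binom{k+m+1}{k-m}-\binom{k+m-1}{k-m-2}$. Two applications of Pascal's rule collapse this to $\binom{k+m}{k-m}+\binom{k+m-1}{k-m-1}$, exactly the stated coefficient; summing the geometric series over $n$ and collecting constants through $(2\sigma\phi)^{2m+1}\phi=\sigma\,2^{2m+1}\phi^{2m+2}$ yields \eqref{eq:expseries_2}.

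I expect the obstacles to be bookkeeping rather than conceptual. The first is keeping $\sigma$ consistent across the two formulas — the single residual $\sigma$ in \eqref{eq:expseries_2} versus none in \eqref{eq:expseries_1} originates precisely from the odd factor $\sinh(2x)=\sigma\sinh(2a)$, so the normalization $a=\sigma x$ is what makes this transparent. The second is justifying the rearrangement of the double sum, which rests on absolute convergence in the open half-plane $\Re(\sigma\phi)>0$, where the polynomially growing $\binom{j+2m+1}{j}$ is dominated by the geometric decay; this should be stated explicitly. A minor point worth flagging is the lower endpoint: in \eqref{eq:expseries_1} the series effectively begins at $k=m+1$, and in \eqref{eq:expseries_2} the coefficient at $k=0$ must vanish for the geometric factor $\E^{-\sigma k\phi}/(1-\E^{-\sigma k\phi})$ to make sense, which it does once the admissible range $\gamma\le m$ (here forcing $m\ge1$) is respected, so the apparent $k=0$ term is harmless.
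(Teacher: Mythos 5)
Your proposal is correct and follows essentially the same route as the paper: expand $\sinh^{-(2m+2)}$ via the negative binomial series, interchange the (absolutely convergent) double sum, sum the geometric series in $n$, reindex so the binomial becomes $\genfrac(){0pt}{1}{k+m}{k-m-1}$ (vanishing for $k\le m$), and use Pascal's rule twice to collapse $\genfrac(){0pt}{1}{k+m+1}{k-m}-\genfrac(){0pt}{1}{k+m-1}{k-m-2}$ into the stated coefficient for the $\sinh$ case. The only (cosmetic) difference is that you normalize $a=\sigma n\phi/2$ at the outset, whereas the paper proves the identities for $\Re(\phi)>0$ and then extends to $\Re(\phi)<0$ by the parity of $S_{2m+2}$ (even) and $S^{(\sinh,1)}_{2m+2}$ (odd); your explicit justification of the summation interchange and of the $k=0$ endpoint is a small improvement in rigor over the paper's wording.
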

\begin{proof}
We start by expressing a generic term of $S_{2m+2}(\phi)$ with exponential functions:
\begin{equation*}
\frac{\phi^{2m+2}}{\sinh^{2m+2}(n\phi/2)}=\frac{2^{2m+2}\phi^{2m+2}}{\left(\E^{n\phi/2}-\E^{-n\phi/2}\right)^{2m+2}}
=\frac{2^{2m+2}\phi^{2m+2}\E^{-n\phi(m+1)}}{\left(1-\E^{-n\phi}\right)^{2m+2}}.
\end{equation*}
When $\Re(\phi)> 0$, we can use the well-known binomial property
\begin{equation*}
	\frac{1}{(1-z)^{\beta+1}} = \sum_{k=0}^{\infty}\genfrac(){0pt}{0}{k+\beta}{k}z^k\qquad |z|<1
\end{equation*}
and the generic term of the hyperbolic series is given by the series
\begin{equation*}
\frac{\phi^{2m+2}}{\sinh^{2m+2}(n\phi/2)}
	=2^{2m+2}\phi^{2m+2} \sum_{k=0}^\infty\genfrac(){0pt}{0}{k+2m+1}{k}\E^{-n(k+m+1)\phi}.
\end{equation*}
We plug this inside the hyperbolic series and we invert the order of summation, 
then the last sum is the geometrical series which is converging for $\Re(\phi)> 0$. We find:
\begin{equation*}
\begin{split}
S_{2m+2}(\phi)&=2^{2m+2}\phi^{2m+2} \sum_{k=0}^\infty\sum_{n=1}^\infty\genfrac(){0pt}{0}{k+2m+1}{k}\E^{-n(k+m+1)\phi}\\
&=\sum_{k=0}^\infty \genfrac(){0pt}{0}{k+2m+1}{k} \frac{2^{2m+2}\phi^{2m+2}}{ \E^{(k+m+1)\phi}-1}.
\end{split}
\end{equation*}
Then, we change the summation index name from $k$ to $k'=k+m+1$, so that
\begin{equation*}
\begin{split}
S_{2m+2}(\phi)&=\!\!\sum_{k=m+1}^\infty\!\! \genfrac(){0pt}{0}{k+m}{k-m-1} \frac{2^{2m+2}\phi^{2m+2}}{ \E^{k\phi}-1}
=\!\!\sum_{k=m+1}^\infty \!\!\genfrac(){0pt}{0}{k+m}{k-m-1}\frac{(2\phi)^{2m+2}\E^{-k\phi}}{1-\E^{-k\phi}}.
\end{split}
\end{equation*}
At last, noticing that the binomial coefficient is vanishing in $k=0,1,\dots,m$ we recover Eq. (\ref{eq:expseries_1}).
The same procedure for $S^{(\sinh,1)}_{2m+2}(\phi)$ leads to
\begin{equation*}
S^{(\sinh,1)}_{2m+2}(\phi)=\frac{(2\phi)^{2m+2}}{2}\sum_{k=0}^\infty
	\left[ \genfrac(){0pt}{0}{k+m+1}{k-m}-\genfrac(){0pt}{0}{k+m-1}{k-m-2}\right]\frac{\E^{-k\phi}}{1-\E^{-k\phi}}.
\end{equation*}
Replacing the first binomial factor with
\begin{equation*}
\genfrac(){0pt}{1}{k+m+1}{k-m}=\genfrac(){0pt}{1}{k+m}{k-m-1}+\genfrac(){0pt}{1}{k+m}{k-m}
=\genfrac(){0pt}{1}{k+m-1}{k-m-1}+\genfrac(){0pt}{1}{k+m-1}{k-m-2}+\genfrac(){0pt}{1}{k+m}{k-m}
\end{equation*}
reproduces Eq. (\ref{eq:expseries_2}) for $\Re(\phi)>0$.
Noticing that $S_{2m+2}(\phi)$ is an even function in the exchange $\phi\to-\phi$,
while $S^{(\sinh,1)}_{2m+2}(\phi)$ is odd, we obtain Eq. (\ref{eq:expseries_1}) and Eq. (\ref{eq:expseries_2})
for $\Re(\phi)<0$.
\end{proof}
\begin{lemma}\label{lem:oddbinomial}
The binomial $\genfrac(){0pt}{1}{k+m}{k-m-1}$ for integer $m\geq 0$ is a odd polynomial in $k$
of degree $2m+1$. And the binomial combination
$\genfrac(){0pt}{1}{k+m-1}{k-m-1}+\genfrac(){0pt}{1}{k+m}{k-m}$ for integer $m\geq 0$
is an even polynomial in $k$ of degree $2m$.
\end{lemma}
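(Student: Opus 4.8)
The plan is to rewrite each binomial coefficient as an explicit polynomial in $k$ using the symmetry $\binom{n}{r}=\binom{n}{n-r}$, and then to read off the parity directly from the symmetry of its linear factors under $k\mapsto-k$.

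First I would note that $\binom{k+m}{k-m-1}=\binom{k+m}{2m+1}$, which as a function of $k$ is the product of $2m+1$ consecutive linear factors,
\[
\binom{k+m}{2m+1}=\frac{1}{(2m+1)!}\prod_{j=-m}^{m}(k+j),
\]
a polynomial of degree exactly $2m+1$ whose factor indices range symmetrically over $j\in\{-m,\dots,m\}$. Substituting $k\mapsto-k$ gives $\prod_{j=-m}^{m}(-k+j)=(-1)^{2m+1}\prod_{j=-m}^{m}(k-j)$, and since $j\mapsto-j$ permutes the index set $\{-m,\dots,m\}$, the product $\prod_{j=-m}^{m}(k-j)$ equals $\prod_{j=-m}^{m}(k+j)$. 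The overall sign is therefore $(-1)^{2m+1}=-1$, which shows the polynomial is odd.

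For the second claim I would handle the two summands together after writing
\[
\binom{k+m-1}{k-m-1}=\frac{1}{(2m)!}\prod_{j=-m}^{m-1}(k+j),\qquad
\binom{k+m}{k-m}=\frac{1}{(2m)!}\prod_{j=-m+1}^{m}(k+j).
\]
The key observation is that $k\mapsto-k$ interchanges these two products. Indeed, since $2m$ is even, $\prod_{j=-m}^{m-1}(-k+j)=\prod_{j=-m}^{m-1}(k-j)$, and the reindexing $j\mapsto-j$ carries $\{-m,\dots,m-1\}$ onto $\{-m+1,\dots,m\}$, turning this product into $\prod_{j=-m+1}^{m}(k+j)$, i.e.\ the second factor; the symmetric computation sends the second product to the first. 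Hence the sum is invariant under $k\mapsto-k$ and so is even. Its degree is at most $2m$, and since both summands have leading coefficient $1/(2m)!$ the sum has leading coefficient $2/(2m)!\neq0$, so the degree is exactly $2m$.

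The argument is entirely elementary, and I expect the only delicate point to be the bookkeeping of the index sets of the linear factors --- in particular checking that the reflection $j\mapsto-j$ fixes the symmetric range $\{-m,\dots,m\}$ in the first part while swapping the two shifted ranges in the second. No substantive obstacle is anticipated.
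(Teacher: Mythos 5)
Your proof is correct and takes essentially the same approach as the paper: both write the binomial coefficients as products of $2m+1$ (respectively $2m$) consecutive linear factors and read off parity from the substitution $k\mapsto -k$. The only cosmetic difference is in the second part, where the paper factors out the $2m-1$ common factors to exhibit the sum as $\frac{2k}{(2m)!}$ times an odd product of degree $2m-1$, while you observe that the reflection interchanges the two products; both yield evenness, and your explicit leading-coefficient check that the degree is exactly $2m$ is a small extra point of care.
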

\begin{proof}
The fact that they are polynomial of finite degree follows from the definition of binomial coefficient,
consider for instance the first binomial coefficient:
\begin{equation*}
\begin{split}
\genfrac(){0pt}{0}{k+m}{k-m-1}&=\frac{(k+m)!}{(k-m-1)!(2m+1)!}\\ &=\frac{1}{(2m+1)!}
\underbrace{(k+m)(k+m-1)\cdots(k+1)k(k-1)\cdots(k-m)}_{2m+1\text{ factors}}.
\end{split}
\end{equation*}
Now, to show that is odd, we first send $k$ to $-k$ in the previous equation and we obtain
\begin{equation*}
\genfrac(){0pt}{0}{m-k}{-k-m-1}=\frac{1}{(2m+1)!}(m-k)(m-1-k)\cdots(1-k)(-k)(-k-1)\cdots(-k-m);
\end{equation*}
then, we gather a $-1$ sign in every round bracket, obtaining an overall $(-1)^{2m+1}=-1$ sign:
\begin{equation*}
\genfrac(){0pt}{0}{m-k}{-k-m-1}=\frac{-1}{(2m+1)!}(k-m)(k-m+1)\cdots(k-1)k(k+1)\cdots(k+m-1)(k+m);
\end{equation*}
therefore, by comparison we see that
\begin{equation*}
\genfrac(){0pt}{0}{k+m}{k-m-1}=-\genfrac(){0pt}{0}{m-k}{-k-m-1}
\end{equation*}
meaning that $\genfrac(){0pt}{1}{k+m}{k-m-1}$ is odd with respect to $k$.
The others binomial coefficients reads
\begin{equation*}
\begin{split}
\genfrac(){0pt}{0}{k+m-1}{k-m-1}+\genfrac(){0pt}{0}{k+m}{k-m}&=\frac{2k}{(2m)!}
\Big[\underbrace{(k+m-1)\cdots(k-m+1)}_{2m-1\text{ factors}}\Big].
\end{split}
\end{equation*}
Using the same method as before we see that the polynomials inside the square bracket is odd, therefore
the combination $\genfrac(){0pt}{1}{k+m-1}{k-m-1}+\genfrac(){0pt}{1}{k+m}{k-m}$ is an even polynomial.
\end{proof}
Thanks to the Lemma \ref{lem:oddbinomial} we can define the following numbers and reveal the relation
between hyperbolic and Lambert series.
\begin{definition}\label{def:coeffbinc}
The binomial coefficient $\genfrac(){0pt}{1}{k+m}{k-m-1}$ expressed as a polynomial in $k$ is given by
\begin{equation}
\label{eq:Coeffbinc}
\genfrac(){0pt}{0}{k+m}{k-m-1}=\frac{1}{(2m+1)!}\sum_{i=0}^{m} c_{2i+1}^{(m)} k^{2i+1};
\end{equation}
instead, the binomial combination $\genfrac(){0pt}{1}{k+m-1}{k-m-1}+\genfrac(){0pt}{1}{k+m}{k-m}$ is
\begin{equation}
\label{eq:Coeffbind}
\genfrac(){0pt}{0}{k+m-1}{k-m-1}+\genfrac(){0pt}{0}{k+m}{k-m}=\frac{1}{(2m)!}\sum_{i=0}^{m} d_{2i}^{(m)} k^{2i}.
\end{equation}
\end{definition}
\begin{proposition}\label{prop:LambertRel}
For $m\geq 0$ integer and $\Re(\phi)\neq 0$, the hyperbolic series
(\ref{eq:baseserie_cosh}), (\ref{eq:baseserie_sinh}) and (\ref{eq:baseserie}) are linear combination
of the Lambert series $\mathcal{L}_{q}(s)$  in Def.~\ref{def:Lambertserie},
\begin{align}
\label{eq:LambertRel_1}
S_{2m+2}(\phi)&=\frac{(2\phi)^{2m+2}}{(2m+1)!}\sum_{i=0}^{m} c_{2i+1}^{(m)}\,\mathcal{L}_{\E^{-\sigma \phi}}(2i+1),\\
\label{eq:LambertRel_2}
S^{(\sinh,1)}_{2m+2}(\phi)&=\frac{(2\sigma \phi)^{2m+1}\phi}{(2m)!}\sum_{i=0}^{m} d_{2i}^{(m)} \mathcal{L}_{\E^{-\sigma\phi}}(2i),
\end{align}
where $\sigma=$sgn$(\Re(\phi))$ and the generating function for $c_i^{(m)}$ is given by (\ref{eq:Coeffbinc})
and for $d_i^{(m)}$ by (\ref{eq:Coeffbind}).
\end{proposition}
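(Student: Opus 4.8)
The plan is to obtain the two explicit identities (\ref{eq:LambertRel_1}) and (\ref{eq:LambertRel_2}) by feeding the polynomial expansions of the binomial coefficients (Definition~\ref{def:coeffbinc}) into the exponential representations of Lemma~\ref{lem:expseries}, and then to dispose of the remaining two series (\ref{eq:baseserie_cosh}) and (\ref{eq:baseserie_sinh}) by invoking Proposition~\ref{prop:redundant}. Concretely, for $S_{2m+2}(\phi)$ I would start from (\ref{eq:expseries_1}) and replace the coefficient $\genfrac(){0pt}{1}{k+m}{k-m-1}$ by its polynomial form $\frac{1}{(2m+1)!}\sum_{i=0}^m c_{2i+1}^{(m)} k^{2i+1}$ given in (\ref{eq:Coeffbinc}). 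Since the sum over $i$ is finite, I can pull the constant $\frac{(2\phi)^{2m+2}}{(2m+1)!}$ and the coefficients $c_{2i+1}^{(m)}$ outside and interchange the $i$-sum with the $k$-sum, leaving inner sums of the form $\sum_k k^{2i+1}\frac{\E^{-\sigma k\phi}}{1-\E^{-\sigma k\phi}}$. With $q=\E^{-\sigma\phi}$ these are precisely the Lambert series $\mathcal{L}_q(2i+1)$ of Definition~\ref{def:Lambertserie}, which yields (\ref{eq:LambertRel_1}).

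The treatment of $S^{(\sinh,1)}_{2m+2}(\phi)$ is identical: starting from (\ref{eq:expseries_2}) I substitute the even-polynomial form (\ref{eq:Coeffbind}) for the binomial combination, interchange the finite $i$-sum with the $k$-sum, and recognize each inner sum $\sum_k k^{2i}\frac{\E^{-\sigma k\phi}}{1-\E^{-\sigma k\phi}}$ as $\mathcal{L}_{\E^{-\sigma\phi}}(2i)$, reproducing (\ref{eq:LambertRel_2}). Finally, the series (\ref{eq:baseserie_cosh}) and (\ref{eq:baseserie_sinh}) are, by Proposition~\ref{prop:redundant}, finite linear combinations of the $S_{2(m-l)+2}$ and $S^{(\sinh,1)}_{2(m+l-2p)+2}$; inserting the two identities just obtained expresses them, too, as finite linear combinations of Lambert series, which establishes the proposition for all three types of series.

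Two routine points require care rather than ingenuity. First, the interchange of the two summations is legitimate because the outer sum over $i$ is finite and each resulting Lambert series converges absolutely: for $\Re(\phi)\neq 0$ one has $\sigma\Re(\phi)=|\Re(\phi)|>0$, so $|q|=\E^{-|\Re(\phi)|}<1$ and the geometric decay of $q^k$ dominates the polynomial growth $k^{2i+1}$. Second, one must handle the $k=0$ term in the representations of Lemma~\ref{lem:expseries}, where the factor $\frac{\E^{-\sigma k\phi}}{1-\E^{-\sigma k\phi}}$ is singular: the sums are to be read over $k\geq 1$, exactly the range of the Lambert series, and this costs nothing because the polynomial coefficients vanish there anyway — the odd polynomial $\sum c_{2i+1}^{(m)} k^{2i+1}$ has $k=0$ as a root, and by Lemma~\ref{lem:oddbinomial} the even combination actually carries a factor $k^2$, so that $d_0^{(m)}=0$. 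I do not expect any genuine obstacle: the substantive work has already been done in the exponential representation of Lemma~\ref{lem:expseries} and in the polynomial identities of Definition~\ref{def:coeffbinc}, and Proposition~\ref{prop:LambertRel} is essentially their bookkeeping consequence.
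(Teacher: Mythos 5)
Your proposal is correct and is essentially the paper's own proof: the paper likewise obtains (\ref{eq:LambertRel_1}) and (\ref{eq:LambertRel_2}) by plugging the polynomial forms (\ref{eq:Coeffbinc}) and (\ref{eq:Coeffbind}) into the exponential representations of Lemma~\ref{lem:expseries} and comparing with Definition~\ref{def:Lambertserie}, with Proposition~\ref{prop:redundant} transporting the result to (\ref{eq:baseserie_cosh}) and (\ref{eq:baseserie_sinh}); your remarks on the summation interchange and the $k=0$ term merely make explicit what the paper leaves implicit. One minor slip: $d_0^{(m)}=0$ holds only for $m\geq 1$ (the paper itself later records $d_0^{(0)}=2$), but this is harmless because the constraint $\gamma<m+1$ in Definition~\ref{def:baseserie} restricts $S^{(\sinh,1)}_{2m+2}$ — and hence (\ref{eq:LambertRel_2}) — to $m\geq 1$.
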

\begin{proof}
By Lemma \ref{lem:expseries}, for $\Re(\phi)>0$ the series $S_{2m+2}(\phi)$ is given by (\ref{eq:expseries_1}).
Plugging the definition (\ref{eq:Coeffbinc}) in (\ref{eq:expseries_1}) and comparing with Eq. (\ref{eq:Lambertserie})
we readily obtain (\ref{eq:LambertRel_1}). The same argument can be used to obtain
(\ref{eq:LambertRel_1}) for $\Re(\phi)<0$ and similarly to derive Eq. (\ref{eq:LambertRel_2}).
\end{proof}
In Sec.~\ref{sec:gammarep} we find that the values of the coefficients $c_{2i+1}^{(m)}$ and $d_{2i}^{(m)}$
are given in terms of the generalized Bernoulli polynomials, see Eq.~(\ref{eq:relBernGen}).
For the topics of this section, it suffices to say that $c_{2i+1}^{(m)}$ and $d_{2i}^{(m)}$
are real coefficients.
\begin{proposition}\label{prop:analLamb}
Let $s$ be any complex number, the Lambert series $\mathcal{L}_q(s)$ as a function of $q$ is
an analytic function for any $|q|<1$.
\end{proposition}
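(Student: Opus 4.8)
The plan is to invoke the Weierstrass theorem on locally uniform limits of holomorphic functions: for fixed $s$, the Lambert series is a sum whose every term is holomorphic in $q$ on the open unit disk, so it suffices to prove that the series converges uniformly on every compact subset of $\{|q|<1\}$ in order to conclude that the sum is itself holomorphic there.

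First I would verify that each summand $f_k(q)=k^s q^k/(1-q^k)$ is holomorphic on $|q|<1$. The prefactor $k^s=\E^{s\log k}$ is a (complex) constant, unambiguous because $k\geq 1$ makes $\log k$ the ordinary real logarithm; the numerator $q^k$ is entire; and the denominator $1-q^k$ vanishes only at the $k$-th roots of unity, all of which lie on $|q|=1$ and hence outside the open disk. Thus each $f_k$ is holomorphic on the whole open unit disk.

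Next I would establish locally uniform convergence by the Weierstrass $M$-test. Fixing a compact set $K\subset\{|q|<1\}$, I choose $r<1$ with $|q|\leq r$ for all $q\in K$. For such $q$ one has $|k^s|=k^{\Re(s)}$, $|q^k|\leq r^k$, and $|1-q^k|\geq 1-|q|^k\geq 1-r^k\geq 1-r>0$, where the last inequality holds because $r^k\leq r$ for $k\geq 1$. Combining these bounds gives the uniform estimate
\[
\left|\frac{k^s q^k}{1-q^k}\right|\leq \frac{k^{\Re(s)}}{1-r}\,r^k =: M_k .
\]
The series $\sum_{k\geq 1} M_k$ converges since $M_{k+1}/M_k=\bigl((k+1)/k\bigr)^{\Re(s)}r\to r<1$, so the ratio test applies to this polynomially-weighted geometric sequence. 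Hence $\sum_k f_k$ converges uniformly on $K$, and applying Weierstrass's theorem yields that $\mathcal{L}_q(s)=\sum_{k\geq 1} f_k(q)$ is holomorphic on $|q|<1$.

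The only step requiring genuine care is the lower bound on $|1-q^k|$, which must be uniform in both $k$ and $q\in K$; this is exactly why one passes to a compact set and extracts the radius $r<1$, after which $1-r^k\geq 1-r$ settles the matter. Beyond that, the argument is routine and I do not anticipate any real obstacle.
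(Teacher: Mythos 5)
Your proof is correct, but it takes a genuinely different route from the paper. The paper first applies the ratio test to obtain pointwise convergence on $|q|<1$, and then invokes the classical identity $\mathcal{L}_q(s)=\sum_{n\geq 1}\sigma_s(n)\,q^n$ (cited from Abramowitz--Stegun) to rewrite the Lambert series as a power series; analyticity then follows because a convergent power series is analytic in its disk of convergence. You instead prove locally uniform convergence directly via the Weierstrass $M$-test --- the key estimate being the uniform lower bound $|1-q^k|\geq 1-r^k\geq 1-r$ on $|q|\leq r<1$, which you correctly identify as the one step needing care --- and then conclude by Weierstrass's theorem on locally uniform limits of holomorphic functions. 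Your argument is self-contained: it does not outsource the rearrangement into a power series to a citation, and the locally uniform convergence you establish is strictly stronger than the pointwise convergence given by the paper's ratio test (that first step alone would not suffice without the cited identity). What the paper's route buys in exchange is extra arithmetic information --- the Taylor coefficients of $\mathcal{L}_q(s)$ are the divisor sums $\sigma_s(n)$ --- and an immediate conclusion once the identity is accepted. Both proofs are valid; yours is the more elementary and self-sufficient of the two.
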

\begin{proof}
First, we prove that the functions series $\mathcal{L}_q(s)$ is a convergent series.
The ratio test on Lambert series
\begin{equation*}
\lim_{k\to\infty}\left|\frac{(k+1)^s q^{k+1}}{1-q^{k+1}}\frac{1-q^k}{k^s q^k}\right|=|q|
\end{equation*}
shows that $\mathcal{L}_q(s)$ is pointwise convergent for $|q|<1$ and $\forall\,s\in\mathbb{C}$.
But the Lambert series can also be written as a power series \cite[\S 24.3.3]{abramowitz+stegun}:
\begin{equation*}
\mathcal{L}_q(s)=\sum_{k=1}^\infty \frac{k^s q^k}{1-q^k}=\sum_{n=1}^\infty \sigma_s(n) q^n\qquad |q|<1.
\end{equation*}
Therefore, the Lambert series is a convergent power series in $|q|<1$, which proves that it is analytic.
\end{proof}
\begin{proposition}
Choose $q=\E^{-\I\theta}$ with $\theta$ a real positive number, the Lambert series of definition~\ref{def:Lambertserie}
is a divergent series.
\end{proposition}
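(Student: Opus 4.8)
The plan is to defeat the necessary condition for convergence, $a_k\to0$, where $a_k=k^sq^k/(1-q^k)$ is the $k$-th summand. Since $\theta$ is real we have $|q|=|\E^{-\I\theta}|=1$, so we sit exactly on the boundary $|q|=1$ and the power-series representation used in Proposition~\ref{prop:analLamb} is unavailable. First I would evaluate the denominator: from $q^k=\E^{-\I k\theta}$ one finds $|1-q^k|^2=(1-\cos k\theta)^2+\sin^2 k\theta=2-2\cos k\theta=4\sin^2(k\theta/2)$, hence $|1-q^k|=2\,|\sin(k\theta/2)|$. With $|q^k|=1$ and $|k^s|=k^s$ for real $s$, this yields the exact value $|a_k|=k^s/\bigl(2\,|\sin(k\theta/2)|\bigr)$, and, since $|\sin|\le1$, the uniform lower bound $|a_k|\ge k^s/2$ whenever $a_k$ is defined.

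For $s\ge0$ --- in particular for the odd integers $s=2i+1$ that feed the hyperbolic series via Proposition~\ref{prop:LambertRel} --- this already finishes the proof: $|a_k|\ge1/2$, so $a_k\not\to0$ and the series diverges by the term test. The only caveat is that all terms be defined. If $q$ is a root of unity, say $q^N=1$ with $N\ge1$, the summands with $k$ a multiple of $N$ have vanishing denominator, so the series is not even well defined and diverges trivially; if $q$ is not a root of unity then $\sin(k\theta/2)\ne0$ for every $k\ge1$ and the bound above applies verbatim. Either way convergence fails.

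The main obstacle is the range $s<0$, where $k^s\to0$ and the crude estimate $|\sin|\le1$ no longer prevents $a_k\to0$; the question then becomes how small $\sin(k\theta/2)$ can be along subsequences, i.e.\ a matter of Diophantine approximation of $\alpha=\theta/2\pi$. Here I would use the continued-fraction convergents of $\alpha$: writing $k\theta/2=\pi k\alpha$, for any irrational $\alpha$ there are infinitely many convergent denominators $k$ with $|\sin(\pi k\alpha)|\le\pi\|k\alpha\|<\pi/k$, whence $|a_k|>k^{s+1}/(2\pi)$, which stays bounded away from $0$ as long as $s\ge-1$ and again defeats the term test. For $s<-1$ the terms genuinely decay along such subsequences and convergence becomes sensitive to the arithmetic of $\theta$ (badly approximable $\theta$ can make the series converge), so the clean statement is really for $s\ge-1$. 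Since the paper only ever needs $s$ a positive odd integer, I would present the $s\ge0$ term-test argument as the proof and relegate the negative-$s$ discussion to a remark.
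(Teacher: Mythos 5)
Your proof is correct, and it takes a genuinely different---and in fact more robust---route than the paper's. The paper starts from the same computation $|1-q^k|^2=2-2\cos(k\theta)$, but then applies the \emph{root} test: it argues that because $\cos(n\theta)>1-\epsilon$ holds for infinitely many $n$, the limit superior of $|a_k|^{1/k}$ is infinite. That step interchanges the limits in $\epsilon$ and $k$ illegitimately: to get $\limsup_k|a_k|^{1/k}>1$ one would need $2-2\cos(k\theta)$ to be \emph{exponentially} small in $k$ along a subsequence, a Liouville-type property of $\theta/2\pi$ that fails for almost every $\theta$ (for badly approximable $\theta$ one has $2-2\cos(k\theta)\geq c/k^{2}$, so the root-test limit is $1$ and the test is inconclusive, even though divergence still holds). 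Your term test needs no such arithmetic input in the range that matters: for $s\geq0$ the bound $|a_k|\geq k^{s}/2\geq 1/2$ defeats the necessary condition for convergence outright, and your separate treatment of roots of unity covers a degenerate case the paper ignores. Your continued-fraction extension to $-1\leq s<0$, and your observation that for $s<-1$ and badly approximable $\theta$ the series can actually converge---so the proposition, stated for arbitrary real $s$, is false in that range---go beyond the paper and identify the statement's correct scope; since the paper only ever feeds the Lambert series into the hyperbolic series through Proposition~\ref{prop:LambertRel} with $s$ a nonnegative integer, presenting the $s\geq0$ argument as the proof and the rest as a remark, as you propose, is exactly the right division.
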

\begin{proof}
In this case the series becomes
\begin{equation*}
    \sum_{k=1}^\infty \frac{k^s}{\E^{\I k\theta}-1}.
\end{equation*}
The root test for convergence gives
\begin{equation*}
\alpha=\limsup_{k\to \infty} \left(\frac{|k^s|}{|\E^{\I k\theta}-1|}\right)^{1/k}
    =\limsup_{k\to \infty} \frac{k^{s/k}}{\left[2-2\cos(k\theta)\right]^{1/2k}}.
\end{equation*}
Since for any $\bar{k}>0$ and small $\epsilon>0$, it exist an integer $n>\bar{k}$ such that
$\cos(n\theta)>1-\epsilon$, then the sup diverges; that is
\begin{equation*}
\alpha=\lim_{k\to \infty} \lim_{\epsilon\to 0}\frac{k^{s/k}}{\epsilon^{1/2k}}=\infty.
\end{equation*}
Therefore the series diverges.
\end{proof}
As a consequence, also the hyperbolic series are divergent series for $\phi$ in the imaginary
axis (except in $\phi=0$). This also means that for any $\phi$ not in the imaginary axis,
the radius of convergence of the hyperbolic series could not be greater than the distance of $\phi$
from the imaginary axis. This fact prevent the hyperbolic series to be real analytic in a open
set containing $\phi=0$.
\begin{proposition}\label{prop:analseries}
The hyperbolic series (\ref{eq:baseserie_cosh}), (\ref{eq:baseserie_sinh}) and (\ref{eq:baseserie}) are analytic
function for $\phi$ in the complex plane except the imaginary axis. The hyperbolic series are continuous function
in the real axis.
\end{proposition}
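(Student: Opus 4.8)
The plan is to split the statement into two parts — analyticity off the imaginary axis and continuity on the whole real axis — and to reduce both, via Propositions~\ref{prop:redundant} and~\ref{prop:LambertRel}, to facts we already control. First I would invoke Proposition~\ref{prop:redundant} to note that the series~(\ref{eq:baseserie_cosh}) and~(\ref{eq:baseserie_sinh}) are finite linear combinations, with constant coefficients, of the functions $S_{2j+2}$ and $S^{(\sinh,1)}_{2j+2}$ (and the constraint $\gamma<m+1$ guarantees that every index appearing is $\geq 2$). Since finite linear combinations of analytic (resp. continuous) functions are again analytic (resp. continuous), it suffices to treat $S_{2m+2}$ and $S^{(\sinh,1)}_{2m+2}$.

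For analyticity, I would work on each connected component of $\mathbb{C}\setminus\I\mathbb{R}$ separately, where $\sigma=\text{sgn}(\Re\phi)$ is constant. On the right half-plane $\Re(\phi)>0$ the map $\phi\mapsto q=\E^{-\phi}$ is holomorphic and satisfies $|q|=\E^{-\Re(\phi)}<1$, so it sends this half-plane into the open unit disc; by Proposition~\ref{prop:analLamb} each $\mathcal{L}_q(s)$ is analytic in $q$ on $|q|<1$, hence $\phi\mapsto\mathcal{L}_{\E^{-\phi}}(s)$ is analytic by composition. Multiplying by the entire prefactors $(2\phi)^{2m+2}$ resp. $(2\sigma\phi)^{2m+1}\phi$ and summing the finitely many terms in~(\ref{eq:LambertRel_1}) and~(\ref{eq:LambertRel_2}) shows $S_{2m+2}$ and $S^{(\sinh,1)}_{2m+2}$ are analytic there. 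The left half-plane is identical with $q=\E^{\phi}$, and since $\mathbb{C}\setminus\I\mathbb{R}$ is the disjoint union of the two half-planes, analyticity holds throughout.

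Continuity away from the origin is then free, because the real axis minus the origin lies inside the two open half-planes. The remaining and genuinely delicate point is continuity at $\phi=0$, which the Lambert representation cannot deliver: the origin sits on the imaginary axis where that representation breaks down, and $\sigma$ jumps across it. Here I would argue directly on the defining series for real $\phi$. Using $|\sinh x|\geq|x|$ for real $x$, each term of $S_{2m+2}$ is bounded by $2^{2m+2}n^{-(2m+2)}$ uniformly in $\phi\in\mathbb{R}$, so the Weierstrass M-test gives uniform convergence of a series of functions continuous on all of $\mathbb{R}$ (each term extending continuously through $0$ with value $(2/n)^{2m+2}$), whence $S_{2m+2}$ is continuous on $\mathbb{R}$. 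For $S^{(\sinh,1)}_{2m+2}$ I would rewrite $\sinh(n\phi)=2\sinh(n\phi/2)\cosh(n\phi/2)$ and bound the $n$-th term on a real interval $[-\delta,\delta]$ using that $\coth$ is decreasing on $(0,\infty)$; the decisive point is that the constraint $\gamma<m+1$ forces $m\geq1$, so the resulting comparison series is $\sum_n |\phi|\,n^{-2m}$, which converges (since $2m\geq2$) and tends to $0$ as $\phi\to0$, matching the value of the sum there.

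I expect the main obstacle to be precisely this last estimate at the origin for $S^{(\sinh,1)}_{2m+2}$: unlike the $S_{2m+2}$ case, the naive bound $\cosh(n\phi/2)\leq\cosh(n\delta/2)$ is too lossy, since it produces an exponentially growing, non-summable majorant. One must instead keep the quotient $\cosh(n\phi/2)/|\sinh(n\phi/2)|^{2m+1}$ together and exploit the monotonicity of $\coth$ together with the dimensional constraint $m\geq1$ to extract a convergent $\zeta$-type majorant carrying an overall factor $|\phi|$. Everything else is routine bookkeeping of entire prefactors and finite sums.
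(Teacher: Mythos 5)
Your proposal is correct. For analyticity off the imaginary axis and for the continuity of $S_{2m+2}$ at the origin it follows the paper exactly: analyticity comes from the linearity relations of Proposition~\ref{prop:LambertRel} composed with the analyticity of the Lambert series in $|q|<1$ (Proposition~\ref{prop:analLamb}), and continuity of $S_{2m+2}$ on an interval $[-a,a]$ comes from the Weierstrass M-test with majorant $M_n=2^{2m+2}n^{-(2m+2)}$ (the paper gets this bound by observing that each term is maximal at $\phi=0$, you get it from $|\sinh x|\geq|x|$; it is the same estimate). Where you genuinely depart from the paper is the delicate case of $S^{(\sinh,1)}_{2m+2}$ near $\phi=0$. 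The paper handles it by Dini's lemma: for fixed real $\phi$ the terms have one sign, so the partial sums converge monotonically, and Dini yields uniform convergence on compact sets --- but Dini's theorem presupposes that the limit function is continuous, so it can legitimately be applied only on compacta avoiding the origin (the paper itself says ``uniformly convergent in a set not containing the origin''), and the continuity of $S^{(\sinh,1)}_{2m+2}$ at $0$ is then left essentially implicit. Your replacement argument closes exactly this point: writing $\sinh(n\phi)=2\sinh(n\phi/2)\cosh(n\phi/2)$, using the monotonicity of $\coth$ on $(0,\infty)$ (equivalently $\coth x\leq 1+1/x$) together with $\sinh x\geq x$, and using that the constraint $\gamma<m+1$ forces $m\geq 1$, one gets on $[-\delta,\delta]$ a majorant of the form $C\,|\phi|\,n^{-2m}$ (indeed $2\phi^{2m+2}\coth(n\phi/2)\sinh^{-2m}(n\phi/2)\leq 2^{2m+1}\bigl(\phi^2 n^{-2m}+2|\phi|\,n^{-2m-1}\bigr)$), which is summable since $2m\geq 2$, gives uniform convergence \emph{through} the origin, and shows the sum tends to $0=S^{(\sinh,1)}_{2m+2}(0)$. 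So your route is quantitative and self-contained where the paper's is soft: it avoids the circularity in Dini's hypothesis and explicitly covers the one point ($\phi=0$ for the $\sinh$-type series) that the paper's argument sidesteps, at the modest cost of an elementary estimate.
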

\begin{proof}
Since the hyperbolic series for $\Re(\phi)>0$ ($\Re(\phi)<0$) are linear combination
of Lambert series which are analytic function in that domain, they are analytic function in that
region too.

To prove that in the real axis they are continuous functions, we show that they converge
uniformly in a open set containing the origin.
When we set $\phi$ to zero we obtain the well known series related to the Riemann zeta function $\zeta$:
\begin{equation*}
\begin{split}
S^{(\gamma)}_{2m+2}(0)&=S_{2m+2}(0)=\sum_{n=1}^\infty \frac{2^{2m+2}}{n^{2m+2}}=2^{2m+2}\zeta\left(2m+2\right)\\
&=(-1)^m\frac{2^{2m+1}(2\pi)^{2m+2}B_{2m+2}}{(2m+2)!};\\
S^{(\sinh,\gamma)}_{2m+2}(0)&=0;
\end{split}
\end{equation*}
where $B_n$ are the Bernoulli numbers, Definition \ref{def:bernpoly}.
The series are clearly pointwise convergent in $\phi=0$.

When $\phi\neq 0$ to prove absolute convergence it suffices to use the ratio test; for $S_{2m+2}(\phi)$ we have
\begin{equation*}
\lim_{n\to\infty} \left|\frac{\left(\E^{n\phi/2}-\E^{-n\phi/2}\right)^{2m+2}}{\left(\E^{(n+1)\phi/2}-\E^{-(n+1)\phi/2}\right)^{2m+2}}\right|
=\E^{-|\phi|(m+1)}<1\quad \text{for }m>-1;
\end{equation*}
instead, for $S^{(\sinh,1)}_{2m+2}(\phi)$ we have
\begin{equation*}
\lim_{n\to\infty} \left|\frac{\left(\E^{n\phi/2}-\E^{-n\phi/2}\right)^{2m+2}}{\left(\E^{(n+1)\phi/2}-\E^{-(n+1)\phi/2}\right)^{2m+2}}
\frac{\E^{(n+1)\phi}-\E^{-(n+1)\phi}}{\E^{n\phi}-\E^{-n\phi}}\right|
=\E^{-|\phi|m}
\end{equation*}
that is smaller than 1, hence absolutely convergent, for $m>1$.

For uniform convergence of $S^{(\sinh,1)}_{2m+2}(\phi)$, we notice that for all $n$ and $\phi\in\mathbb{R}$
the sequence functions in the series are monotonically increasing.
Therefore, by Dini's lemma, see \cite[Theorem 7.13 on p. 150]{book:Rudin}
and \cite[Theorem 12.1 on p. 157]{book:Jurgen}, the hyperbolic function $S^{(\sinh,1)}_{2m+2}(\phi)$ is uniformly
convergent in every compact set contained in $\mathbb{R}$. In particular, we showed that is uniformly convergent
in a set not containing the origin $\phi=0$.

Consider now a closed interval $I_0=[-a,a]$ with $a>0$ a real number, we use Weierstrass M-test to
prove uniform convergence of $S_{2m+2}(\phi)$ on $I_0$. Let be $f_n$ the sequence functions in the series $S_{2m+2}(\phi)$:
\begin{equation*}
f_n(\phi)=\frac{\phi^{2m+2}}{\sinh^{2m+2}(n\phi/2)}.
\end{equation*}
Notice that $f_n$ have a maximum in $\phi=0$. Then we can built the sequence $M_n$ by
\begin{equation*}
|f_n(\phi)|\leq |f_n(0)|=\frac{2^{2m+2}}{n^{2m+2}}\equiv M_n,\quad \forall\phi\in I_0.
\end{equation*}
Since the sequence $M_n$ converges, then by Weierstrass M-test the series converges absolutely and uniformly on $I_0$.

Because the hyperbolic functions are defined by an uniformly convergent series and every functions in the succession is
$C^\infty(\mathbb{R})$, then the hyperbolic functions are continuous in $I_0$ and because we know that they are
analytic in every other part of the real axis then they are continuous in all $\mathbb{R}$.
\end{proof}

\section{Bernoulli polynomials representation for a ratio of Gamma functions}
\label{sec:gammarep}
In this section we give the values of the coefficients $c_{2i+1}^{(m)}$ and $d_{2i}^{(m)}$ of
Def.~\ref{def:coeffbinc} which are needed to take advantage of the linearity relations between
hyperbolic series and the Lambert series. By using a power series representation of the ratio of
two gamma functions, we find that these coefficients can be given in terms of the generalized Bernoulli
polynomials. First of all, we can invert the definition of the coefficients and compute them
through the following derivatives:
\begin{equation}
\label{eq:CoeffCDDervGamma}
\begin{split}
c_i^{(m)}&=\frac{(2m+1)!}{i!}\frac{\de^i}{\de k^i}\genfrac(){0pt}{0}{k+m}{k-m-1}\Big|_{k=0},\\
d_i^{(m)}&=\frac{(2m)!}{i!}\frac{\de^i}{\de k^i}\left[\genfrac(){0pt}{0}{k+m-1}{k-m-1}
    +\genfrac(){0pt}{0}{k+m}{k-m}\right]\Big|_{k=0}.
\end{split}
\end{equation}
The binomial coefficient can also be written as a ratio of Euler gamma functions:
\begin{equation*}
\begin{split}
c_i^{(m)}&=\frac{1}{i!}\frac{\de^i}{\de k^i}\frac{\Gamma(k+m+1)}{\Gamma(k-m)}\Big|_{k=0},\\
d_i^{(m)}&=\frac{1}{i!}\frac{\de^i}{\de k^i}\left[\frac{\Gamma(k+m)}{\Gamma(k-m)}
    +\frac{\Gamma(k+m+1)}{\Gamma(k-m+1)}\right]\Big|_{k=0}.
\end{split}
\end{equation*}
Therefore, if we express the ratio of gamma functions in the previous equation as a power series of $k$
we will readily obtain the coefficients.
Tricomi and Erdélyi~\cite{tricomi1951} gave the asymptotic expansion for the ratio
of two gamma functions
\begin{equation*}
\frac{\Gamma(z+\alpha)}{\Gamma(z+\beta)}\sim\sum_{n=0}^\infty
\frac{\Gamma(1+\alpha-\beta)}{\Gamma(\alpha-\beta-n+1)}
\frac{B_n^{(\alpha-\beta+1)}(\alpha)}{n!}z^{\alpha-\beta-n}
\quad \text{as }z\to\infty,
\end{equation*}
valid for $\alpha,\beta$ such that $\alpha-\beta\neq -1,-2,\dots$.
The asymptotic expansion becomes an exact relation when $\alpha-\beta\geq 0$ is an
integer~\cite[Sec 2.11, Eq. (12)]{LukeVol1}. That is the case we are interested in.
We can then reformulate the representation of the ratio of two gamma functions of~\cite{tricomi1951}
in a form suitable for our scope.
\begin{theorem}[Luke 1969 \cite{LukeVol1}]\label{thm:gammaratiorepgen}
Given $z\in\mathbb{C}$ and two integer $\alpha,\beta\geq 0$, then
\begin{equation}
\label{eq:gammaratiorepgen}
\frac{\Gamma(z+\alpha)}{\Gamma(z-\beta)}=\sum_{i=0}^{\alpha+\beta} \frac{(\alpha+\beta)!}{i!}
    \frac{B_{\alpha+\beta-i}^{(1+\alpha+\beta)}(\alpha)}{(\alpha+\beta-i)!} z^{i}
\end{equation}
where $\Gamma$ is the Euler Gamma.
\end{theorem}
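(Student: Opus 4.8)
The plan is to derive the identity directly from the Tricomi--Erd\'elyi asymptotic expansion quoted above, exploiting the fact that both sides are polynomials. First I note that when $\alpha,\beta\geq 0$ are integers the left-hand side is literally a polynomial in $z$ of degree $N:=\alpha+\beta$: repeated use of $\Gamma(w+1)=w\,\Gamma(w)$ gives the Pochhammer product
\begin{equation*}
\frac{\Gamma(z+\alpha)}{\Gamma(z-\beta)}=(z-\beta)(z-\beta+1)\cdots(z+\alpha-1),
\end{equation*}
a product of exactly $N$ linear factors. The strategy is therefore to produce the coefficients of this polynomial from the Tricomi--Erd\'elyi series, rather than to prove an analytic identity between transcendental functions.

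Next I would specialise the Tricomi--Erd\'elyi expansion by replacing $\beta$ with $-\beta$, so that the exponent difference becomes $\alpha-(-\beta)=\alpha+\beta=N$, a non-negative integer. This turns the prefactor into
\begin{equation*}
\frac{\Gamma(1+N)}{\Gamma(N-n+1)}=\frac{N!}{(N-n)!},
\end{equation*}
which vanishes for every $n>N$ because $N-n+1$ is then a non-positive integer and $1/\Gamma$ has zeros there. Consequently the a priori infinite series truncates at $n=N$, leaving
\begin{equation*}
\frac{\Gamma(z+\alpha)}{\Gamma(z-\beta)}=\sum_{n=0}^{N}\frac{N!}{(N-n)!}\frac{B_n^{(N+1)}(\alpha)}{n!}\,z^{N-n}.
\end{equation*}
The final step is a change of summation index $i=N-n$, which reorders the terms into ascending powers of $z$ and reproduces exactly the claimed formula with $N=\alpha+\beta$ and generalized Bernoulli order $1+\alpha+\beta$.

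The one point requiring care is the passage from an \emph{asymptotic} relation to an \emph{exact} one, which is the content of the citation to Luke. I would justify it as follows. The left-hand side is a genuine polynomial of degree $N$, and the truncated right-hand side is also a polynomial of degree $N$ in $z$ (the $n=0$ term, with $B_0^{(N+1)}(\alpha)=1$, supplies the leading $z^{N}$, matching the monic Pochhammer product). An asymptotic expansion in descending powers of $z$ is unique, and the asymptotic expansion of a polynomial is that very polynomial; hence a terminating power series that is asymptotic to a degree-$N$ polynomial must coincide with it identically. This removes any residual exponentially small ambiguity and upgrades the $\sim$ to an equality. I expect this verification of exactness --- together with the bookkeeping of the two Gamma prefactors under $\beta\mapsto-\beta$ --- to be the only delicate part; everything else is the substitution and reindexing described above.
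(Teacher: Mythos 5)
Your proof is correct, but it takes a genuinely different route from the paper's. The paper does not treat the Tricomi--Erd\'elyi expansion as a black box to be upgraded; it re-runs their \emph{method} in the exact setting: it writes $\Gamma(z+\alpha)/\Gamma(z-\beta)$ as a Hankel-type loop integral, recognizes under the integral sign the generating function of the generalized Bernoulli polynomials, and extracts the Taylor coefficients of the ratio at $z=0$ term by term via $\frac{1}{2\pi i}\int e^{zt}\,t^{-a-1}\,dt=z^{a}/\Gamma(a+1)$. Since the ratio is a polynomial, its Taylor series at the origin is automatically exact, and the series terminates through the factor $1/\Gamma(1+\alpha+\beta-i)$, which vanishes for $i>\alpha+\beta$ --- the same vanishing mechanism you exploit, but applied at the opposite end of the expansion. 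Your argument instead works at infinity: the left side is the monic Pochhammer polynomial $(z-\beta)(z-\beta+1)\cdots(z+\alpha-1)$ of degree $N=\alpha+\beta$, the quoted asymptotic series truncates at $n=N$ because $\Gamma(1+N)/\Gamma(N-n+1)=0$ for $n>N$, and a polynomial is uniquely determined by its asymptotic expansion in descending powers along a ray, since the difference of two polynomials with the same expansion tends to zero at infinity and must therefore vanish identically. That uniqueness step is sound and is exactly what legitimizes replacing $\sim$ by $=$; it yields a shorter, more elementary justification of the exactness claim that the paper simply attributes to Luke, at the price of taking the Tricomi--Erd\'elyi asymptotic statement as an external input, whereas the paper's contour-integral derivation never needs a separate exactness argument because it directly produces a terminating power series at the origin. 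Both proofs ultimately rest on Tricomi--Erd\'elyi, just on different ingredients of their work: the integral representation in the paper's case, the asymptotic theorem in yours.
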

\begin{proof}
We follow the proof of~\cite{tricomi1951} which was based on Watson's lemma.
In our case $\beta$ is negative and we are not interested in the value for $z\to\infty$
and $\alpha$ and $\beta$ are integers.
Furthermore, our case is much more simple because we know that the ratio
$\Gamma(z+\alpha)/\Gamma(z-\beta)$ is analytic and can be written as a power series:
\begin{equation}
\label{eq:coeffgamma}
\frac{\Gamma(z+\alpha)}{\Gamma(z-\beta)}=\sum_{i=0}^{\infty} \gamma_{i} z^{i}.
\end{equation}
Following~\cite{tricomi1951} for every $\alpha$ and $\beta$ such that $\alpha+\beta\neq-1,-2,\dots$
and for every $z$ in the complex plane which does not lay in the real segment from $-\alpha$ to $-\alpha-\infty$,
the ratio between gamma functions has the integral representation
\begin{equation*}
\begin{split}
\frac{\Gamma(z+\alpha)}{\Gamma(z-\beta)}=&\frac{\Gamma(1+\alpha+\beta)}{2\pi\,\I}
\int_{-\infty\cdot\E^{\I\delta}}^{(0^+)}\frac{\E^{z\, t}\,\E^{\alpha\, t}}{(\E^t-1)^{1+\alpha+\beta}}\D t\\
=&\frac{\Gamma(1+\alpha+\beta)}{2\pi\,\I}\int_{-\infty\cdot\E^{\I\delta}}^{(0^+)}
\frac{\E^{z\, t}}{t^{1+\alpha+\beta}}\left(\frac{1}{\E^t-1}\right)^{1+\alpha+\beta}\E^{\alpha\, t}\,\D t,
\end{split}
\end{equation*}
where $-\pi/2<\delta<\pi/2$.
The coefficient $\gamma_i$ of Eq.~(\ref{eq:coeffgamma}) can be found through
\begin{equation*}
\gamma_{i}=\lim_{z\to 0} \frac{\gamma_{i}(z)}{i!}
\end{equation*}
where $\gamma_{i}(z)$ is the derivative
\begin{equation*}
\begin{split}
\gamma_{i}(z)=&\frac{\D^{i}}{\D z^{i}}\frac{\Gamma(z+\alpha)}{\Gamma(z-\beta)}\\
=&\frac{\Gamma(1+\alpha+\beta)}{2\pi\,\I}\int_{-\infty\cdot\E^{\I\delta}}^{(0^+)}
\frac{\E^{z\, t}}{t^{1+\alpha+\beta-i}}\left(\frac{1}{\E^t-1}\right)^{1+\alpha+\beta}\E^{\alpha\, t}\,\D t.
\end{split}
\end{equation*}
In the previous equation we recognize the generating function of the generalized
Bernoulli polynomials (Def.~\ref{def:bernpoly}), from which the derivative becomes
\begin{equation*}
\gamma_{i}(z)=\sum_{n=0}^\infty
\frac{\Gamma(1+\alpha+\beta)}{n!}B_{n}^{(1+\alpha+\beta)}(\alpha)\frac{1}{2\pi\,\I}
\int_{-\infty\cdot\E^{\I\delta}}^{(0^+)}\frac{\E^{z\, t}}{t^{1+\alpha+\beta-i-n}} \D t.
\end{equation*}
Then, using the integral~\cite{tricomi1951}
\begin{equation*}
\frac{1}{2\pi\,\I}\int_{-\infty\cdot\E^{\I\delta}}^{(0^+)}\frac{\E^{z\, t}}{t^{a+1}} \D t=
\frac{z^a}{\Gamma(a+1)}
\end{equation*}
we obtain
\begin{equation*}
\gamma_{i}(z)=\sum_{n=0}^{\infty}
\frac{\Gamma(1+\alpha+\beta)}{\Gamma(1+\alpha+\beta-i-n)}\frac{B_{n}^{(1+\alpha+\beta)}(\alpha)}{n!}
    z^{\alpha+\beta-i-n}.
\end{equation*}
In the limit $z\to 0$ we are just left with the term in $n=1+\alpha+\beta-i$,
therefore the coefficient of the series (\ref{eq:coeffgamma}) is
\begin{equation*}
\gamma_{i}=\frac{(\alpha+\beta)!}{i!}
    \frac{B_{\alpha+\beta-i}^{(1+\alpha+\beta)}(\alpha)}{\Gamma(1+\alpha+\beta-i)}.
\end{equation*}
At last, noticing that
\begin{equation*}
1/\Gamma(1+\alpha+\beta-i)=0\quad \text{for }i>\alpha+\beta,
\end{equation*}
we have that the powers series stops at $i=\alpha+\beta$.
\end{proof}
In the same way we can prove the following proposition.
\begin{proposition}\label{prop:gammaratiorep}
Given $z\in\mathbb{C}$ and an integer $m$, then
\begin{equation}
\label{eq:gammaratiorepEven}
\frac{\Gamma(z+m+1)}{\Gamma(z-m)}=\sum_{i=0}^{m} \frac{(2m+1)!}{(2i+1)!}
    \frac{B_{2m-2i}^{(2m+2)}(m+1)}{(2m-2i)!} z^{2i+1}
\end{equation}
and
\begin{equation}
\label{eq:gammaratiorepOdd}
\begin{split}
\frac{\Gamma(z+m)}{\Gamma(z-m)}+\frac{\Gamma(z+m+1)}{\Gamma(z-m+1)}=
    &\sum_{i=0}^{m} \frac{2(2m)!}{(2i)!(2m-2i)!} B_{2m-2i}^{(2m+1)}(m) z^{2i}.
\end{split}
\end{equation}
\end{proposition}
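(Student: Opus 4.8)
The plan is to derive both identities directly from the exact expansion of Theorem~\ref{thm:gammaratiorepgen}, specialized to the relevant integer values of $\alpha$ and $\beta$, and then to use the reflection symmetry of the generalized Bernoulli polynomials to collapse each resulting sum into the stated form. No new analytic input (Watson's lemma, contour integrals) should be needed beyond what was already established for Theorem~\ref{thm:gammaratiorepgen}.

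For the first identity (\ref{eq:gammaratiorepEven}) I would apply Theorem~\ref{thm:gammaratiorepgen} with $\alpha=m+1$ and $\beta=m$, so that $\alpha+\beta=2m+1$ and the superscript becomes $1+\alpha+\beta=2m+2$. This yields immediately
\[
\frac{\Gamma(z+m+1)}{\Gamma(z-m)}=\sum_{i=0}^{2m+1}\frac{(2m+1)!}{i!}\frac{B_{2m+1-i}^{(2m+2)}(m+1)}{(2m+1-i)!}\,z^{i}.
\]
By Definition~\ref{def:coeffbinc} and Lemma~\ref{lem:oddbinomial} this ratio equals $(2m+1)!\,\genfrac(){0pt}{0}{z+m}{z-m-1}$, which is an \emph{odd} polynomial in $z$; hence every coefficient of an even power of $z$ must vanish and only the terms with $i=2j+1$, $j=0,\dots,m$, survive. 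Relabelling $i=2j+1$ then reproduces exactly (\ref{eq:gammaratiorepEven}).

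For the second identity (\ref{eq:gammaratiorepOdd}) I would expand each ratio separately: the first with $\alpha=\beta=m$ and the second with $\alpha=m+1$, $\beta=m-1$ (valid for $m\ge 1$; the case $m=0$ reduces to $1+1=2$ and is checked directly). Both choices share $\alpha+\beta=2m$ and the same superscript $2m+1$, so the two expansions run over the same powers $z^{i}$, $i=0,\dots,2m$, and add to
\[
\frac{\Gamma(z+m)}{\Gamma(z-m)}+\frac{\Gamma(z+m+1)}{\Gamma(z-m+1)}=\sum_{i=0}^{2m}\frac{(2m)!}{i!\,(2m-i)!}\Big[B_{2m-i}^{(2m+1)}(m)+B_{2m-i}^{(2m+1)}(m+1)\Big]z^{i}.
\]
The decisive step is the reflection symmetry $B_n^{(\ell)}(\ell-t)=(-1)^n B_n^{(\ell)}(t)$, which follows at once from the generating function of Definition~\ref{def:bernpoly} by the substitutions $x\mapsto -x$ and $t\mapsto \ell-t$, using $(\E^{-x}-1)^\ell=(-1)^\ell\E^{-\ell x}(\E^{x}-1)^\ell$. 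Taking $\ell=2m+1$ and $t=m$, so that $\ell-t=m+1$, gives $B_n^{(2m+1)}(m+1)=(-1)^n B_n^{(2m+1)}(m)$. Thus for odd $n=2m-i$ the two Bernoulli values cancel—consistent with Lemma~\ref{lem:oddbinomial}, which guarantees the left-hand side is even in $z$—while for even $n=2m-2j$ they coincide and the bracket collapses to $2\,B_{2m-2j}^{(2m+1)}(m)$. Keeping only the terms $i=2j$ produces precisely (\ref{eq:gammaratiorepOdd}).

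I expect the only genuine subtleties to be the sign bookkeeping in the reflection formula and the boundary value $m=0$, where the shift $\beta=m-1$ would fall outside the admissible range of Theorem~\ref{thm:gammaratiorepgen}; the latter is harmless since the corresponding term is identically $1$ there and can be treated by hand. Everything else is a direct substitution into an already-proven exact expansion, so the argument genuinely proceeds ``in the same way'' as the proof of Theorem~\ref{thm:gammaratiorepgen}.
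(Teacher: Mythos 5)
Your proposal is correct and takes essentially the same route as the paper's proof: specialize Theorem~\ref{thm:gammaratiorepgen} to $(\alpha,\beta)=(m+1,m)$ for the first identity and to $(\alpha,\beta)=(m,m)$ and $(m+1,m-1)$ for the second, invoke the parity statement of Lemma~\ref{lem:oddbinomial}, and collapse the bracket $B_{2m-2i}^{(2m+1)}(m)+B_{2m-2i}^{(2m+1)}(m+1)$ via the reflection symmetry $B_n^{(\ell)}(\ell-t)=(-1)^n B_n^{(\ell)}(t)$. If anything, your write-up is slightly more careful than the paper's, since you derive the reflection formula from the generating function instead of citing it and you notice that $\beta=m-1$ falls outside the hypotheses of Theorem~\ref{thm:gammaratiorepgen} when $m=0$, checking that boundary case by hand.
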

\begin{proof}
This are just special cases of theorem \ref{thm:gammaratiorepgen}.
We already know from Lemma~\ref{lem:oddbinomial} that the ratio
$\Gamma(z+m+1)/\Gamma(z-m)$ is a odd polynomial of degree $2m+1$.
Choosing $\alpha=m+1$ and $\beta=m$ in theorem \ref{thm:gammaratiorepgen} we find (\ref{eq:gammaratiorepEven}).

It also follows from lemma~\ref{lem:oddbinomial} that the (\ref{eq:gammaratiorepOdd})
is an even polynomial of degree $2m$.
Following the demonstration of theorem \ref{thm:gammaratiorepgen} we end up with
\begin{equation*}
\begin{split}
\frac{\Gamma(z+m)}{\Gamma(z-m)}+\frac{\Gamma(z+m+1)}{\Gamma(z-m+1)}=&\sum_{i=0}^{m} \frac{(2m)!}{(2i)!(2m-2i)!}
    \left(B_{2m-2i}^{(2m+1)}(m)\right.\\
    &\left.+ B_{2m-2i}^{(2m+1)}(m+1) \right) z^{2i}.
\end{split}
\end{equation*}
Then from the definition of generalized Bernoulli polynomials in Def.~\ref{def:bernpoly}
it clearly follows that~\cite{LukeVol1}
\begin{equation*}
B^{(m)}_n(t)=(-1)^n B_n^{(m)}(m-t),
\end{equation*}
and in particular that $B_{2m-2i}^{(2m+1)}(m)=B_{2m-2i}^{(2m+1)}(m+1)$.
\end{proof}
The coefficients $c_{2k+1}^{(m)}$ and $d_{2k+1}^{(m)}$ immediately follow from
proposition~\ref{prop:gammaratiorep}.
\begin{proposition}\label{prop:relCBernGen}
The coefficient of linearity between hyperbolic series and Lambert series of Def.~\ref{def:coeffbinc} are
\begin{equation}
\label{eq:relBernGen}
\begin{split}
c_{2k+1}^{(m)}&=\frac{(2m+1)!}{(2k+1)!}\frac{B_{2m-2k}^{(2m+2)}(m+1)}{(2m-2k)!}\quad k\geq 0,\\
d_{2k}^{(m)}&=\frac{2(2m)!}{(2k)!(2m-2k)!}B_{2m-2k}^{(2m+1)}(m)\quad k\geq 0.
\end{split}
\end{equation}
\end{proposition}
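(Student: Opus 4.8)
The plan is to recognize that both coefficient families are literally the Taylor coefficients in $k$ of ratios of Gamma functions that have already been evaluated in Proposition~\ref{prop:gammaratiorep}, so the proof reduces to a coefficient comparison with no new analytic input.

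First I would rewrite the defining binomial coefficient of $c_{2k+1}^{(m)}$ in Def.~\ref{def:coeffbinc} as a ratio of Gamma functions,
\begin{equation*}
\genfrac(){0pt}{0}{k+m}{k-m-1}=\frac{(k+m)!}{(k-m-1)!\,(2m+1)!}=\frac{1}{(2m+1)!}\frac{\Gamma(k+m+1)}{\Gamma(k-m)},
\end{equation*}
so that the defining relation~(\ref{eq:Coeffbinc}) becomes the identity
\begin{equation*}
\frac{\Gamma(k+m+1)}{\Gamma(k-m)}=\sum_{i=0}^{m} c_{2i+1}^{(m)}\,k^{2i+1}.
\end{equation*}
The right-hand side is exactly the polynomial expansion~(\ref{eq:gammaratiorepEven}) of Proposition~\ref{prop:gammaratiorep} evaluated at $z=k$; matching the coefficient of $k^{2i+1}$ yields the first formula in~(\ref{eq:relBernGen}) after renaming $i\to k$.

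For the even coefficients I would proceed identically, writing the combination appearing in~(\ref{eq:Coeffbind}) as
\begin{equation*}
\genfrac(){0pt}{0}{k+m-1}{k-m-1}+\genfrac(){0pt}{0}{k+m}{k-m}=\frac{1}{(2m)!}\left[\frac{\Gamma(k+m)}{\Gamma(k-m)}+\frac{\Gamma(k+m+1)}{\Gamma(k-m+1)}\right],
\end{equation*}
and then substituting the closed form~(\ref{eq:gammaratiorepOdd}). Comparing the coefficient of $k^{2i}$ against the defining expansion~(\ref{eq:Coeffbind}) gives the second formula in~(\ref{eq:relBernGen}).

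The coefficient comparison is legitimate because Lemma~\ref{lem:oddbinomial} guarantees that each side is a genuine polynomial of the stated finite degree, so the two expansions are equal as polynomials in $k$ and their coefficients may be matched term by term. There is no real obstacle here: the analytic work was already carried out in Proposition~\ref{prop:gammaratiorep}, and what remains is only the bookkeeping of translating the binomial notation of Def.~\ref{def:coeffbinc} into the Gamma-ratio notation of Proposition~\ref{prop:gammaratiorep}. The single point deserving a word of care is confirming that the factorial prefactors $1/(2m+1)!$ and $1/(2m)!$ introduced in the two definitions cancel correctly against the prefactors appearing in~(\ref{eq:gammaratiorepEven}) and~(\ref{eq:gammaratiorepOdd}), which they do exactly.
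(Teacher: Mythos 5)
Your proposal is correct and follows essentially the same route as the paper: the paper also rewrites the defining binomial expressions of Def.~\ref{def:coeffbinc} as ratios of Gamma functions and reads the coefficients off the polynomial expansions~(\ref{eq:gammaratiorepEven}) and~(\ref{eq:gammaratiorepOdd}) of Proposition~\ref{prop:gammaratiorep}, stating that the result ``immediately follows'' from that proposition. Your write-up merely makes explicit the coefficient-matching and the cancellation of the $1/(2m+1)!$ and $1/(2m)!$ prefactors, which is exactly the intended argument.
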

From the representation of the ratio of two gamma function in theorem \ref{thm:gammaratiorepgen}
we can also find some special zeros of the generalized Bernoulli polynomials.
\begin{lemma}\label{lemma:ZerosGenBer}
For any integer $m\geq 1$ the followings are zeros of the generalized Bernoulli polynomials
\begin{equation}
\label{eq:ZerosBern}
B^{(2m+1)}_{2m}\left(m\right)=0.
\end{equation}
\end{lemma}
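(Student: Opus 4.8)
The plan is to read off the constant term (the coefficient of $z^0$) on both sides of the polynomial identity (\ref{eq:gammaratiorepOdd}) established in Proposition \ref{prop:gammaratiorep}. On the right-hand side the term with $i=0$ is $\frac{2(2m)!}{0!\,(2m)!}B_{2m}^{(2m+1)}(m)=2B_{2m}^{(2m+1)}(m)$, so the value of that polynomial at $z=0$ equals $2B_{2m}^{(2m+1)}(m)$. The whole lemma will then follow by showing that the left-hand side of (\ref{eq:gammaratiorepOdd}) vanishes at $z=0$.

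First I would evaluate the left-hand side at $z=0$. By Lemma \ref{lem:oddbinomial} each ratio of gamma functions is a genuine polynomial, explicitly a product of $2m$ consecutive linear factors,
\[
\frac{\Gamma(z+m)}{\Gamma(z-m)}=\prod_{j=-m}^{m-1}(z+j),\qquad
\frac{\Gamma(z+m+1)}{\Gamma(z-m+1)}=\prod_{j=-m+1}^{m}(z+j).
\]
For any integer $m\geq 1$ the index $0$ lies in both ranges $[-m,m-1]$ and $[-m+1,m]$, so each product contains the factor $(z+0)=z$. Hence both ratios vanish at $z=0$, and therefore the left-hand side of (\ref{eq:gammaratiorepOdd}) is $0$ at $z=0$.

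Equating the two constant terms then gives $2B_{2m}^{(2m+1)}(m)=0$, which is exactly the claimed identity (\ref{eq:ZerosBern}). Equivalently, one may argue that $1/\Gamma(z-m)$ and $1/\Gamma(z-m+1)$ vanish at $z=0$ because $-m$ and $-m+1$ are non-positive integers for $m\geq1$, giving the same conclusion.

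I expect no real obstacle here; the single point demanding care is the hypothesis $m\geq 1$, which is precisely what places the index $0$ inside the two factor ranges and thus produces the vanishing factor $z$. For $m=0$ the two ratios degenerate to empty products equal to $1$, their sum at $z=0$ is $2$, and one recovers $B_0^{(1)}(0)=1\neq 0$; this confirms that the restriction $m\geq 1$ cannot be dropped.
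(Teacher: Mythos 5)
Your proof is correct and takes essentially the same route as the paper: both evaluate the identity (\ref{eq:gammaratiorepOdd}) at $z=0$, observe that the left-hand side vanishes while the constant term of the right-hand side is $2B_{2m}^{(2m+1)}(m)$, and conclude. The only cosmetic difference is how the vanishing of the left side is justified --- you exhibit the explicit factor $z$ in each product (the paper instead invokes the reflection formula, equivalent to your remark that $1/\Gamma(-m)$ and $1/\Gamma(1-m)$ vanish) --- and your $m=0$ check showing why the hypothesis $m\geq 1$ is needed is a nice addition.
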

\begin{proof}
We recall that from theorem \ref{prop:gammaratiorep} given an integer $m$ we have
\begin{equation*}
\begin{split}
\frac{\Gamma(z+m)}{\Gamma(z-m)}+\frac{\Gamma(z+m+1)}{\Gamma(z-m+1)}=
    &\sum_{i=0}^{m} \frac{2(2m)!}{(2i)!(2m-2i)!} B_{2m-2i}^{(2m+1)}(m) z^{2i}.
\end{split}
\end{equation*}
If we evaluate the previous expression in $z=0$ for integer $m\geq 1$, the two ratios of gamma
functions are vanishing
\begin{equation*}
\frac{\Gamma(m)}{\Gamma(-m)}+\frac{\Gamma(m+1)}{\Gamma(-m+1)}=
2\Gamma(m)\Gamma(m+1)\frac{\sin(m\pi)}{\pi}=0
\end{equation*}
and the sum contains only one terms:
\begin{equation*}
0=\frac{2(2m)!}{(2m)!}B^{(2m+1)}_{2m}\left(m\right),
\end{equation*}
which gives the zeros in (\ref{eq:ZerosBern}).
\end{proof}
To find some special zeros of the polynomial $\mathcal{B}_{2m+2}(\phi)$, coming from the functional
relation (\ref{eq:funcrelS}), we need to prove the following identity, which uses the previous lemma.
\begin{lemma}
For every integer $m$ such that $m\geq 0$, the following identity holds true
\begin{equation}
\label{eq:IdentZeros}
\sum_{k=0}^{m+1}\frac{B_{2k}\,B_{2m+2-2k}^{(2m+2)}(m+1)}{(2k)!(2m+2-2k)!}=0.
\end{equation}
\end{lemma}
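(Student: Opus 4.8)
The plan is to read the left-hand side of~(\ref{eq:IdentZeros}) as (almost) a coefficient in a Cauchy product of two of the generating functions in Definition~\ref{def:bernpoly}, and then to reduce it to the vanishing already proved in Lemma~\ref{lemma:ZerosGenBer}. Concretely, I would multiply the ordinary Bernoulli generating function $x/(\E^x-1)=\sum_n B_n x^n/n!$ by the generalized one $x^{2m+2}\E^{(m+1)x}/(\E^x-1)^{2m+2}=\sum_n B_n^{(2m+2)}(m+1)\,x^n/n!$. Their product is again a generalized Bernoulli generating function, now of order $2m+3$:
\begin{equation*}
\frac{x^{2m+3}\E^{(m+1)x}}{(\E^x-1)^{2m+3}}=\sum_{n=0}^\infty \frac{B_n^{(2m+3)}(m+1)}{n!}\,x^n .
\end{equation*}
Comparing the coefficient of $x^{2m+2}$ on both sides yields
\begin{equation*}
\frac{B_{2m+2}^{(2m+3)}(m+1)}{(2m+2)!}=\sum_{j=0}^{2m+2}\frac{B_j}{j!}\,\frac{B_{2m+2-j}^{(2m+2)}(m+1)}{(2m+2-j)!}.
\end{equation*}

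Next I would split the right-hand sum according to the parity of $j$. Since $B_j=0$ for every odd $j\geq 3$, the surviving even-$j$ terms together form exactly the sum appearing in~(\ref{eq:IdentZeros}), and the only extra contribution is the single term $j=1$:
\begin{equation*}
\frac{B_{2m+2}^{(2m+3)}(m+1)}{(2m+2)!}=\sum_{k=0}^{m+1}\frac{B_{2k}\,B_{2m+2-2k}^{(2m+2)}(m+1)}{(2k)!\,(2m+2-2k)!}+\frac{B_1}{1!}\,\frac{B_{2m+1}^{(2m+2)}(m+1)}{(2m+1)!}.
\end{equation*}

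I would then dispose of the stray $j=1$ term using the reflection formula $B_n^{(M)}(t)=(-1)^n B_n^{(M)}(M-t)$ recalled in the proof of Proposition~\ref{prop:gammaratiorep}. With $M=2m+2$, $n=2m+1$ and $t=m+1$ one has $M-t=m+1$, so the odd sign $(-1)^{2m+1}=-1$ forces $B_{2m+1}^{(2m+2)}(m+1)=-B_{2m+1}^{(2m+2)}(m+1)$, whence $B_{2m+1}^{(2m+2)}(m+1)=0$. The $j=1$ contribution thus drops out, leaving the even sum equal to $B_{2m+2}^{(2m+3)}(m+1)/(2m+2)!$. To finish, I would identify this remaining quantity as a known zero: setting $m'=m+1\geq 1$, the number $B_{2m+2}^{(2m+3)}(m+1)$ is precisely $B_{2m'}^{(2m'+1)}(m')$, which vanishes by Lemma~\ref{lemma:ZerosGenBer}. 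This proves~(\ref{eq:IdentZeros}) for all $m\geq 0$.

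The only genuinely delicate point is the bookkeeping around $j=1$. Because $B_1=-\tfrac12\neq 0$ (unlike the higher odd Bernoulli numbers), the Cauchy-product coefficient is \emph{not} simply the even sum, so one must verify that the reflection symmetry of the generalized Bernoulli polynomials annihilates the spurious odd term. Once that is checked, everything else is routine generating-function manipulation together with a direct appeal to the preceding lemma.
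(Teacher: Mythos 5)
Your proof is correct and follows essentially the same route as the paper: both expand the product $\frac{x}{\E^x-1}\cdot\bigl(\frac{x\,\E^{x/2}}{\E^x-1}\bigr)^{2m+2}=\bigl(\frac{x}{\E^x-1}\bigr)^{2m+3}\E^{(m+1)x}$ as a Cauchy product of the two Bernoulli generating functions and reduce the identity to the vanishing of $B_{2m+2}^{(2m+3)}(m+1)$, i.e.\ to Lemma~\ref{lemma:ZerosGenBer}. The only cosmetic difference is the treatment of the $B_1$ term: the paper adds $x/2$ to the ordinary Bernoulli generating function so that both factors are even series, whereas you keep the raw product and annihilate the $j=1$ cross term via the reflection identity $B_{2m+1}^{(2m+2)}(m+1)=0$ --- the same parity fact in different clothing.
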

\begin{proof}
Consider the function
\begin{equation}
\label{auxf}
f(x)=\left(\frac{x}{\E^x-1}+\frac{x}{2}\right)\left(\frac{x\,\E^{x/2}}{\E^x-1}\right)^{2m+2}.
\end{equation}
The function $f$ admits an expansion in power series of $x$
\begin{equation}
\label{svilf}
f(x)=\sum_{k=0}^\infty d_k\, x^k
\end{equation}
which is converging for $|x|\leq 2\pi$.
Let us focus just on the even powers of $x$, i.e. on  $d_{2k}$.
By expanding the product in (\ref{auxf}) as
\begin{equation*}
f(x)=\frac{x}{\E^x-1}\left(\frac{x\,\E^{x/2}}{\E^x-1}\right)^{2m+2}
	+\frac{x}{2}\left(\frac{x\,\E^{x/2}}{\E^x-1}\right)^{2m+2},
\end{equation*}
we see that the second term is odd, so it does not contributes to the even coefficients $d_{2k}$.
The first term instead can be expanded with the generalized Bernoulli polynomials
\begin{equation*}
\frac{x}{\E^x-1}\left(\frac{x\,\E^{x/2}}{\E^x-1}\right)^{2m+2}=
\left(\frac{x}{\E^x-1}\right)^{2m+3} \E^{(m+1)x}=
\sum_{k=0}^\infty \frac{1}{k!}B^{(2m+3)}_{k}(m+1) x^k
\end{equation*}
from which, thanks to lemma~\ref{lemma:ZerosGenBer}, follows that for every integer $m\geq 0$ 
\begin{equation*}
 d_{2m+2}=\frac{1}{(2m+2)!}B^{(2m+3)}_{2m+2}(m+1)=0.
\end{equation*}

The $f$ function has been built such that the first factor in brackets of (\ref{auxf}) is
the generating function of Bernoulli numbers of which we have subtracted the only odd term in $x$:
\begin{equation*}
\frac{x}{\E^x-1}+\frac{x}{2}=\sum_{i=0}^\infty \frac{B_{2i}}{(2i)!} x^{2i};
\end{equation*}
instead, the second brackets is the generating function of Bernoulli polynomials involved
in the identity we want to prove:
\begin{equation*}
\left(\frac{x\,\E^{x/2}}{\E^x-1}\right)^{2m+2}=\sum_{j=0}^\infty \frac{B^{(2m+2)}_{2j}(m+1)}{(2j)!} x^{2j}.
\end{equation*}
Using the Cauchy product on (\ref{auxf}) we therefore obtain the expansion
\begin{equation*}
f(x)=\sum_{k=0}^\infty \sum_{l=0}^k \frac{B_{2l}}{(2l)!} \frac{B^{(2m+2)}_{2k-2l}(m+1)}{(2k-2l)!} x^{2k}.
\end{equation*}
Equating this expansion with the one in (\ref{svilf}), we obtain an expression for the
vanishing coefficient $d_{2m+2}$
\begin{equation*}
\sum_{l=0}^{m+1} \frac{B_{2l}}{(2l)!} \frac{B^{(2m+2)}_{2m+2-2l}(m+1)}{(2m+2-2l)!}=d_{2m+2}=0
\end{equation*}
which is the identity we wanted.
\end{proof}
That is all we need to know to study the functional relations of the hyperbolic series.
However, the theorem \ref{thm:gammaratiorepgen} and alternative way of finding the coefficients
(\ref{eq:relBernGen}) make it possible to establish several identities between the generalized
Bernoulli polynomials and other special functions. Those relation are established in the
next section, while the functional relations of the hyperbolic series are derived in Sec.~\ref{sec:FuncRel}.

\section{Bernoulli polynomial identities}
\label{sec:IdenBer}
The coefficients in Proposition \ref{prop:LambertRel} are given by (\ref{eq:relBernGen})
in terms of the generalized Bernoulli polynomials of the type
\begin{equation*}
    B_n^{2m+2}(m+1),\qquad B_n^{2m+1}(m).
\end{equation*}
Those defines a new class of polynomials in $m$ called the reduced Bernoulli polynomials~\cite{2016Elezovic}.
The representation of the ratio of two gamma functions (\ref{eq:gammaratiorepgen}) and different
methods to derive the coefficients $c_i^{(m)}$ and $d_i^{(m)}$ of Proposition \ref{prop:LambertRel},
allow to establish identities for ordinary Bernoulli polynomials and reduced Bernoulli polynomials.

For instance, the coefficients can also be derived either by using the Polygamma function
\begin{equation*}
\psi^{(s-1)}(z)=\frac{\de^s}{\de z^s}\log\left(\Gamma(z)\right)
\end{equation*}
or by the Pochhammer symbol $(x)_n$, which is related to generalized Bernoulli polynomials by~\cite{LukeVol1}
\begin{equation*}
 (x)_n=\frac{\Gamma(x+n)}{\Gamma(x)}=(-1)^{n-1}B_{n-1}^{(n)}(x).
\end{equation*}
%
\begin{lemma}\label{lemma:relCBernGen}
The coefficients of linearity between hyperbolic series and Lambert series of Def.~\ref{def:coeffbinc} are
\begin{equation}
\label{eq:CoeffCBell}
\begin{split}
c_{i}^{(m)}=&\frac{\Gamma(m+1)}{\Gamma(-m)(i!)}\sum_{l=0}^i \sum_{s=1}^{i-l} \sum_{j=1}^{l}(-1)^j \genfrac(){0pt}{0}{i}{l}
	Y_{i-l,s}\left(\psi^{(0)}(m+1),\dots,\psi^{(i-l-s)}(m+1)\right)\\
    &\times Y_{l,j}\left(\psi^{(0)}(-m),\dots,\psi^{(l-j)}(-m)\right),\\
d_{2k+1}^{(m)}=&\sum_{l=0}^i \sum_{s=1}^{i-l} \sum_{j=1}^{l}\frac{(-1)^j}{i!} \genfrac(){0pt}{0}{i}{l}
    \left\{\frac{\Gamma(m)}{\Gamma(-m)}	Y_{i-l,s}\left(\psi^{(0)}(m),\dots,\psi^{(i-l-s)}(m)\right)\right.\\
      &\times  Y_{l,j}\left(\psi^{(0)}(-m),\dots,\psi^{(l-j)}(-m)\right)\\
    &+\frac{\Gamma(m+1)}{\Gamma(1-m)}	Y_{i-l,s}\left(\psi^{(0)}(m+1),\dots,\psi^{(i-l-s)}(m+1)\right)\\
       &\left.\times Y_{l,j}\left(\psi^{(0)}(1-m),\dots,\psi^{(l-j)}(1-m)\right)\right\}.
\end{split}
\end{equation}
where $Y_{n,k}$ are the incomplete Bell Polynomials
\begin{equation*}
\begin{split}
Y_{n,k}(x_{1},x_{2},\dots ,x_{n-k+1})=&\sum \frac{n!}{j_{1}!j_{2}!\cdots j_{n-k+1}!}\times\\
&\times\left(\frac{x_1}{1!}\right)^{j_{1}}\left(\frac{x_2}{2!}\right)^{j_{2}}\cdots
\left(\frac{x_{n-k+1}}{(n-k+1)!}\right)^{j_{n-k+1}}.
\end{split}
\end{equation*}
Instead in terms of the ordinary Bernoulli polynomials and the Stirling number
of the first kind $s(n,k)=\genfrac[]{0pt}{1}{n}{k}$,
the coefficients are given by
\begin{equation}
\label{eq:CoeffCBern}
\begin{split}
c_i^{(m)}=&\frac{1}{i!}\sum_{j=i-1}^{2m}\frac{2m+1}{j+1}\genfrac[]{0pt}{0}{2m}{j}\,(j+2-i)_i \, B_{j+1-i}(m),\\
d_i^{(m)}=&\frac{1}{i!}\sum_{j=i-1}^{2m-1}\frac{4m}{j+1}\genfrac[]{0pt}{0}{2m-1}{j}\,(j+2-i)_i \, B_{j+1-i}(m).
\end{split}
\end{equation}
\end{lemma}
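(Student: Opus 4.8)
The starting point for both representations is the pair of derivative formulas in Eq.~(\ref{eq:CoeffCDDervGamma}), rewritten with Euler Gamma functions, so that $c_i^{(m)}=\frac{1}{i!}\de_k^i[\Gamma(k+m+1)/\Gamma(k-m)]|_{k=0}$ and $d_i^{(m)}$ is the analogous expression for the sum of the two ratios. Throughout it is convenient to treat $m$ as a generic complex parameter and to recover the integer case at the end by continuity.

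To obtain the Bell-polynomial form~(\ref{eq:CoeffCBell}), the plan is to factor the ratio as a product $\Gamma(k+m+1)\cdot\Gamma(k-m)^{-1}$ and to differentiate each factor with the Fa\`a di Bruno formula applied to the exponentials $\Gamma(k+m+1)=\exp\log\Gamma(k+m+1)$ and $\Gamma(k-m)^{-1}=\exp(-\log\Gamma(k-m))$. Since $\de_k^r\log\Gamma(k+a)=\psi^{(r-1)}(k+a)$, the $n$-th derivative of each exponential equals that exponential times the complete Bell polynomial $\sum_{s\geq1}Y_{n,s}$ in the $\psi^{(r)}$'s. The homogeneity property $Y_{l,j}(-x_1,-x_2,\dots)=(-1)^jY_{l,j}(x_1,x_2,\dots)$ produces the sign $(-1)^j$ attached to the factor coming from $\Gamma(k-m)^{-1}$. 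Applying the Leibniz rule to the product, evaluating at $k=0$, and collecting $\Gamma(m+1)/\Gamma(-m)$ out of the sum gives~(\ref{eq:CoeffCBell}) for $c_i^{(m)}$; the formula for $d_i^{(m)}$ follows verbatim by adding the two contributions, whose Gamma prefactors and polygamma arguments are $(m,-m)$ and $(m+1,1-m)$.

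For the Bernoulli form~(\ref{eq:CoeffCBern}) I would first note that, after the shift $x=k+m$, the ratio becomes the falling factorial $\Gamma(k+m+1)/\Gamma(k-m)=x(x-1)\cdots(x-2m)$, while the $d$-combination becomes $x(x-1)\cdots(x-2m+1)+(x-1)(x-2)\cdots(x-2m)$. The key step is the antidifference identity
\begin{equation*}
x(x-1)\cdots(x-N)=(N+1)\sum_{j=0}^{N}\frac{s(N,j)}{j+1}B_{j+1}(x)+\text{const},
\end{equation*}
which I would prove by applying the forward difference $\Delta f(x)=f(x+1)-f(x)$ to both sides: using $B_{j+1}(x+1)-B_{j+1}(x)=(j+1)x^j$ and the Stirling expansion $x(x-1)\cdots(x-N+1)=\sum_j s(N,j)x^j$, both sides have the same difference $(N+1)\sum_j s(N,j)x^j$, hence agree up to a constant. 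Taking $N=2m$ and extracting the coefficient of $k^i$ with $i\geq1$ (so the constant is irrelevant), together with the addition formula $B_{j+1}(m+k)=\sum_i\binom{j+1}{i}B_{j+1-i}(m)k^i$ and the identity $\binom{j+1}{i}=(j+2-i)_i/i!$, reproduces the $c_i^{(m)}$ line of~(\ref{eq:CoeffCBern}). For $d_i^{(m)}$ I would use the reflection $(m-k)(m-k-1)\cdots(m-k-2m+1)=(m+k-1)(m+k-2)\cdots(k-m)$ to recognise the $d$-combination as the even part (in $k$) of $2\,x(x-1)\cdots(x-2m+1)$; the antidifference identity with $N=2m-1$ then yields the second line of~(\ref{eq:CoeffCBern}) upon extracting the even powers of $k$.

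The routine parts are the Leibniz/Fa\`a di Bruno bookkeeping and the coefficient extraction. The main obstacle is the constant of antidifference in the master identity: it shows that~(\ref{eq:CoeffCBern}) is an identity only for the coefficients of $k^i$ with $i\geq1$, which is precisely the range of the meaningful indices ($c_{2i+1}^{(m)}$ and $d_{2i}^{(m)}$ with $i\geq1$), and for $d$ it forces the parity/reflection argument above. A secondary delicate point is the removable nature of the $\Gamma(m+1)/\Gamma(-m)$ singularity in the Bell-polynomial representation at integer $m$: there the prefactor vanishes while the polygamma values diverge, so the right-hand side of~(\ref{eq:CoeffCBell}) must be read as the finite limit $k\to0$ of the analytic product $\Gamma(k-m)^{-1}\sum_j(-1)^jY_{l,j}(\dots)$.
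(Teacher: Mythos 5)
Your proof is correct and, in its skeleton, the same as the paper's: for (\ref{eq:CoeffCBell}) you apply the Leibniz rule to the product $\Gamma(k+m+1)\cdot\Gamma(k-m)^{-1}$ and Fa\`a di Bruno to $\E^{\pm\log\Gamma}$, the sign $(-1)^j$ arising from the inverse factor exactly as in the paper (where it comes from choosing $q(y)=\E^{-y}$); for (\ref{eq:CoeffCBern}) both you and the paper expand the falling factorial $\Gamma(k+m+1)/\Gamma(k-m)=(k+m)(k+m-1)\cdots(k-m)$ in Bernoulli polynomials through Stirling numbers of the first kind and then read off Taylor coefficients at $k=0$. You do, however, supply two things the paper leaves implicit, and both are genuine improvements. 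First, the paper merely quotes the Pochhammer--Bernoulli identity $(x)_n=\sum_{j=0}^{n-1}\frac{n}{j+1}\genfrac[]{0pt}{1}{n-1}{j}\left(B_{j+1}(x)-B_{j+1}\right)$, while your forward-difference argument proves it (up to the additive constant, which, as you correctly note, is invisible to the coefficients with $i\geq 1$). Second, and more substantively, the paper disposes of the second line of (\ref{eq:CoeffCBern}) with ``the same procedure gives $d_i^{(m)}$''; but a literal repetition of the procedure, applied to each of the two Gamma ratios separately, produces the symmetric combination $B_{j+1-i}(m-1)+B_{j+1-i}(m)$ --- the form that actually appears in Corollary~\ref{cor:VanishingBernSum} --- rather than the printed $2B_{j+1-i}(m)$. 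Your reflection step, writing the $d$-combination as twice the even part in $k$ of $(k+m)(k+m-1)\cdots(k-m+1)$, is precisely what converts one form into the other, and it also explains why the printed formula (with its factor $4m$) holds exactly for the even, i.e.\ meaningful, indices $i$. Your closing remark that (\ref{eq:CoeffCBell}) must be read as a limit at integer $m$ (vanishing prefactor $\Gamma(m+1)/\Gamma(-m)$ against divergent $\psi^{(n)}(-m)$) is likewise correct, and is the same point the paper only confronts later, in the proof of Proposition~\ref{Prop:BernHarmonicIdent}, via the reflection formulas.
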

\begin{proof}
The (\ref{eq:CoeffCBell}) are simply obtained by evaluating the
derivatives in (\ref{eq:CoeffCDDervGamma}) using the Faà di Bruno formula.
Using the chain rules inside (\ref{eq:CoeffCDDervGamma}) the
coefficient is written as
\begin{equation}
\label{eq:FaaCoeff}
\begin{split}
c_i^{(m)}=&\frac{1}{i!}\frac{\de^i}{\de k^i}\frac{\Gamma(k+m+1)}{\Gamma(k-m)}\Big|_{k=0}\\
=&\frac{1}{i!}\sum_{l=0}^i \genfrac(){0pt}{0}{i}{l}\left[\left(\frac{\D^{i-l} }{\D z^{i-l}} \Gamma(k+m+1)\right)
	\left(\frac{\D^{l} }{\D z^{l}}\frac{1}{\Gamma(k-m)}\right)\right]_{k=0}.
\end{split}
\end{equation}
Consider now the derivative of the Gamma function.
In terms of the Polygamma function, if we set
\begin{equation*}
q(y)=\E^y,\qquad g(z)=\log \Gamma(z)
\end{equation*}
then the Faà di Bruno formula for the derivative of the Gamma function gives
\begin{equation*}
\begin{split}
\frac{\de^s}{\de z^s}\Gamma(z)=&\frac{\de^s}{\de z^s}\E^{\log \Gamma(z)}=\frac{\D^n}{\D z^n}q(g(z))\\
=&\sum_{s=1}^n q^{(s)}(g(z))
	Y_{n,s}\left(g^{(1)}(z),g^{(2)}(z),\dots,g^{(n-s+1)}(z)\right),
\end{split}
\end{equation*}
where we defined
\begin{equation*}
\begin{split}
q^{(s)}(y)=\frac{\de^s}{\de y^s }q(y)=\E^y,\quad q^{(s)}(g(z))=\Gamma(z)\quad
g^{(s)}(z)=\psi^{(s-1)}(z).
\end{split}
\end{equation*}
Therefore, we have
\begin{equation}
\label{eq:FaaGamma}
\frac{\de^s}{\de z^s}\Gamma(z)=\Gamma(z)\sum_{s=1}^n 
	Y_{n,s}\left(\psi^{(0)}(z),\psi^{(1)}(z),\dots,\psi^{(n-s)}(z)\right).
\end{equation}
Similarly, choosing $q(y)=\E^{-y}$, for the derivative of the inverse Gamma function we find
\begin{equation}
\label{eq:FaaInvGamma}
\frac{\de^s}{\de z^s}\frac{1}{\Gamma(z)}=\frac{1}{\Gamma(z)}\sum_{s=1}^n (-1)^s 
	Y_{n,s}\left(\psi^{(0)}(z),\psi^{(1)}(z),\dots,\psi^{(n-s)}(z)\right).
\end{equation}
Plugging (\ref{eq:FaaGamma}) and (\ref{eq:FaaInvGamma}) in (\ref{eq:FaaCoeff}) proves the proposition for $c_i^{(m)}$.
The same procedure gives $d_i^{(m)}$.

The Eq.s (\ref{eq:CoeffCBern}) are obtained taking advantage of the relation between
the Pochhammer symbol and the Bernoulli polynomials:
\begin{equation*}
\frac{\Gamma(x+1)}{\Gamma(x-n+1)}=(x)_n=\sum_{j=0}^{n-1}\frac{n}{j+1}\genfrac[]{0pt}{0}{n-1}{j}
	\left( B_{j+1}(x)-B_{j+1} \right).
\end{equation*}
The coefficient (\ref{eq:CoeffCDDervGamma}) can then be written as
\begin{equation*}
\begin{split}
c_i^{(m)}=&\frac{1}{i!}\frac{\de^i}{\de k^i}\frac{\Gamma(k+m+1)}{\Gamma(k-m)}\Big|_{k=0}
=\frac{1}{i!}\left(\frac{\de^i}{\de k^i} (k+m)_{2m+1}\right)\Big|_{k=0}\\
=&\frac{1}{i!}\sum_{j=0}^{2m}\frac{2m+1}{j+1}\genfrac[]{0pt}{0}{2m}{j} \left( \frac{\de^i}{\de k^i} B_{j+1}(k+m) \right)\Big|_{k=0}.
\end{split}
\end{equation*}
Therefore, making use of
\begin{equation*}
\frac{\D}{\D x}B_n(x)=\begin{cases} n B_{n-1}(x) & n\geq 1\\
    0 & n=0
 \end{cases}
\end{equation*}
we obtain
\begin{equation*}
\frac{\D^s}{\D x^s}B_n(x)=\begin{cases} 
	(n+1-s)_s \, B_{n-s}(x) & n\geq s\\
    0 & n<s
 \end{cases}
\end{equation*}
and finally the (\ref{eq:CoeffCBern}).
\end{proof}
Since the coefficient $c_i^{(m)}$ ($d_i^{(m)}$) is vanishing for any even (odd) $i$, we can immediately have
the result for those cases.
\begin{corollary}
\label{cor:VanishingBernSum}
For all integers $m\geq 0$ and $i\leq m$, the following sums are vanishing
\begin{equation}
\label{eq:VanishingBernSum}
\begin{split}
\sum_{j=2i-1}^{2m}\frac{1}{j+1}\genfrac[]{0pt}{0}{2m}{j}\,(j+2-2i)_{2i} \, B_{j+1-2i}(m)=&0,\\
\sum_{j=2i}^{2m-1}\frac{1}{j+1}\genfrac[]{0pt}{0}{2m-1}{j}\,(j+1-2i)_{2i+1} \,
    \left(B_{j-2i}(m)+B_{j-2i}(m-1)\right)=&0.
\end{split}
\end{equation}
\end{corollary}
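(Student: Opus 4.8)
The plan is to read off both identities directly from the vanishing of half of the coefficients $c_i^{(m)}$ and $d_i^{(m)}$, a vanishing already supplied by the parity established in Lemma~\ref{lem:oddbinomial}. Since $\genfrac(){0pt}{1}{k+m}{k-m-1}$ is an \emph{odd} polynomial in $k$, its expansion (\ref{eq:Coeffbinc}) contains only odd powers, so $c_i^{(m)}=0$ whenever $i$ is even; dually, since $\genfrac(){0pt}{1}{k+m-1}{k-m-1}+\genfrac(){0pt}{1}{k+m}{k-m}$ is an \emph{even} polynomial in $k$, the expansion (\ref{eq:Coeffbind}) contains only even powers, so $d_i^{(m)}=0$ whenever $i$ is odd. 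Each such vanishing, fed into an explicit closed form for the coefficient, produces one of the two stated sums.

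For the first identity I would substitute the even index $i\mapsto 2i$ (with $0\le i\le m$) into the Bernoulli-polynomial formula (\ref{eq:CoeffCBern}) for $c_i^{(m)}$. The left-hand side is then $c_{2i}^{(m)}$, which is zero by the parity above, while the right-hand side is $\frac{2m+1}{(2i)!}$ times precisely the first sum in (\ref{eq:VanishingBernSum}). As the prefactor $\frac{2m+1}{(2i)!}$ is a nonzero rational number, cancelling it yields the first identity.

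For the second identity I would use the explicit Bernoulli representation of $d_i^{(m)}$ built in the proof of Lemma~\ref{lemma:relCBernGen}. The key is that $d_i^{(m)}=\frac{1}{i!}\,\de_k^i\!\big[\Gamma(k+m)/\Gamma(k-m)+\Gamma(k+m+1)/\Gamma(k-m+1)\big]\big|_{k=0}$, and expanding \emph{each} of the two products $(k+m-1)_{2m}$ and $(k+m)_{2m}$ through the Pochhammer--Bernoulli relation produces, term by term, the symmetric combination $B_{j+1-i}(m-1)+B_{j+1-i}(m)$ carried by the common coefficient $\frac{2m}{j+1}\genfrac[]{0pt}{1}{2m-1}{j}(j+2-i)_i$. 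Setting the odd index $i\mapsto 2i+1$ and invoking $d_{2i+1}^{(m)}=0$ then gives $\frac{2m}{(2i+1)!}$ times the second sum in (\ref{eq:VanishingBernSum}).

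The only care needed concerns the prefactor $2m$: for $m\ge 1$ it is nonzero and can be divided out, delivering the second identity; for $m=0$ the constraint $i\le m$ forces $i=0$, the summation range $2i\le j\le 2m-1$ is empty, and the identity holds vacuously. Thus the main obstacle is mild and purely a matter of bookkeeping—retaining \emph{both} gamma ratios so that the $B_{j-2i}(m)+B_{j-2i}(m-1)$ combination emerges rather than a single Bernoulli polynomial—after which both identities fall out of the parity of the generating binomial coefficients.
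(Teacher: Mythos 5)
Your proof is correct, and its skeleton is the paper's own: the paper derives this corollary in one line by feeding the parity of the binomial polynomials (Lemma~\ref{lem:oddbinomial}, through Definition~\ref{def:coeffbinc} and (\ref{eq:CoeffCDDervGamma})) into the closed forms of Lemma~\ref{lemma:relCBernGen}, and your treatment of the first sum is exactly that argument. Where you genuinely diverge from the paper is the second sum, and your extra bookkeeping there is not a matter of taste but a necessary repair. The formula for $d_i^{(m)}$ actually printed in (\ref{eq:CoeffCBern}) (prefactor $4m$, single polynomial $B_{j+1-i}(m)$) cannot yield the symmetric combination $B_{j-2i}(m)+B_{j-2i}(m-1)$ of (\ref{eq:VanishingBernSum}): since $\frac{\Gamma(k+m+1)}{\Gamma(k-m+1)}-\frac{\Gamma(k+m)}{\Gamma(k-m)}=2m\,k\prod_{l=1}^{m-1}(k^2-l^2)$ is an odd polynomial in $k$, the two gamma ratios share their even-order derivatives at $k=0$ --- which is why the paper's ``doubled single-ratio'' formula is right for even $i$ --- but their odd-order derivatives differ, and for odd $i$ the printed formula does not vanish (at $m=1$, $i=1$ it gives $2$, whereas $d_1^{(1)}=0$). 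Keeping both ratios, as you do, produces the correct closed form with prefactor $2m$ and the symmetric Bernoulli pair, from which $d_{2i+1}^{(m)}=0$ gives the stated identity; your $m=0$ observation is also correct. One caveat you share with the paper: the Pochhammer--Bernoulli relation carries constants $-B_{j+1}$ that are annihilated only after at least one differentiation, so (\ref{eq:CoeffCBern}) --- and with it the first identity --- requires $i\geq 1$; for $i=0$ the first sum in (\ref{eq:VanishingBernSum}) begins with the ill-defined term $j=-1$, and, discarding that term, it equals $-1/12\neq 0$ already at $m=1$.
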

Furthermore equating the different expression for the coefficients (\ref{eq:relBernGen})
(\ref{eq:CoeffCBell}) and (\ref{eq:CoeffCBern}), we can establish several identities relating
the reduced Bernoulli polynomials to the ordinary Bernoulli polynomials and we can give
the form of reduced Bernoulli polynomials in terms of Harmonic numbers.
Equating (\ref{eq:relBernGen}) with (\ref{eq:CoeffCBern}) we readily obtain the following identities.
\begin{proposition}
\label{Prop:GenOrdBernIdent}
For all integers $m\geq 0$ and $n\leq m$,
\begin{equation}
\label{eq:GenOrdBernIdent}
\begin{split}
B_{2n}^{(2m+2)}(m+1)=&\frac{(2n)!}{(2m)!}\sum_{j=2m-2n}^{2m}\frac{1}{j+1}
    \genfrac[]{0pt}{0}{2m}{j}\\
    &\times(j+2n-2m+1)_{2m-2n+1} B_{j+2n-2m}(m),\\
B_{2n}^{(2m+1)}(m)=&\frac{(2n)!}{(2m-1)!}\sum_{j=2m-1-2n}^{2m-1}\frac{1}{j+1}
    \genfrac[]{0pt}{0}{2m-1}{j}\\
    &\times(j+2n-2m+2)_{2m-2n} B_{j+2n-2m+1}(m).
\end{split}
\end{equation}
\end{proposition}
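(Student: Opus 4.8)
The identity in Proposition~\ref{Prop:GenOrdBernIdent} asserts the equality of two expressions for the same quantity, so the natural strategy is to exploit the two independent formulas for the linearity coefficients $c_i^{(m)}$ and $d_i^{(m)}$ already established in the preceding results. Concretely, Proposition~\ref{prop:relCBernGen} gives the representation
\begin{equation*}
c_{2k+1}^{(m)}=\frac{(2m+1)!}{(2k+1)!}\frac{B_{2m-2k}^{(2m+2)}(m+1)}{(2m-2k)!},\qquad
d_{2k}^{(m)}=\frac{2(2m)!}{(2k)!(2m-2k)!}B_{2m-2k}^{(2m+1)}(m),
\end{equation*}
in terms of the reduced Bernoulli polynomials, while Lemma~\ref{lemma:relCBernGen}, Eq.~(\ref{eq:CoeffCBern}), gives a second representation in terms of ordinary Bernoulli polynomials and Stirling numbers of the first kind. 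Both expressions compute the very same coefficient, so I would simply equate them and solve for the reduced Bernoulli polynomial.

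\textbf{Key steps.} First I would set $i=2k+1$ in the formula (\ref{eq:CoeffCBern}) for $c_i^{(m)}$, namely
\begin{equation*}
c_{2k+1}^{(m)}=\frac{1}{(2k+1)!}\sum_{j=2k}^{2m}\frac{2m+1}{j+1}\genfrac[]{0pt}{0}{2m}{j}\,(j+1-2k)_{2k+1}\,B_{j-2k}(m).
\end{equation*}
Equating this with the first line of (\ref{eq:relBernGen}), the prefactors $1/(2k+1)!$ cancel, and after multiplying through by $(2m-2k)!/(2m+1)!$ and reindexing I would isolate $B_{2m-2k}^{(2m+2)}(m+1)$. The final relabelling is to write $n=m-k$, so that $2m-2k=2n$ and the summation lower limit $j=2k$ becomes $j=2m-2n$; this directly produces the first identity in (\ref{eq:GenOrdBernIdent}), with the Pochhammer index $(j+1-2k)_{2k+1}=(j+2n-2m+1)_{2m-2n+1}$ and the Bernoulli argument $B_{j-2k}(m)=B_{j+2n-2m}(m)$. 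The condition $n\leq m$ corresponds exactly to $k\geq 0$, which is required for the coefficient formulas to apply. The second identity follows by the identical manipulation applied to $d_{2k}^{(m)}$, using $i=2k$ in (\ref{eq:CoeffCBern}), the second line of (\ref{eq:relBernGen}), and the symmetry $B_{2m-2k}^{(2m+1)}(m)=B_{2m-2k}^{(2m+1)}(m+1)$ established in Proposition~\ref{prop:gammaratiorep} together with the reflection $B_n^{(\mu)}(t)=(-1)^n B_n^{(\mu)}(\mu-t)$ to handle the symmetric combination $B_{j-2k}(m)+B_{j-2k}(m-1)$ that appears once the $d$-formula is unpacked.

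\textbf{Main obstacle.} Since both representations are already proved, there is no analytic difficulty: the entire proof is a bookkeeping exercise in matching index ranges and Pochhammer factors. The one point deserving care is the precise alignment of summation limits and the Pochhammer offsets after the substitution $n=m-k$, because an off-by-one slip in the lower limit or in $(j+2n-2m+1)_{2m-2n+1}$ would silently corrupt the identity; I would verify the endpoints by checking the extremal cases $n=0$ and $n=m$ directly. A secondary subtlety for the $d_{2k}^{(m)}$ identity is correctly propagating the reflection symmetry so that the ordinary-Bernoulli side is expressed purely at the argument $m$ (or the symmetric pair $m,m-1$) rather than at $m$ and $m+1$; this is where the parity relation $B_n^{(\mu)}(t)=(-1)^n B_n^{(\mu)}(\mu-t)$ does the essential reconciling work.
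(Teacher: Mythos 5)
Your proposal is correct and is exactly the paper's argument: the paper proves Proposition~\ref{Prop:GenOrdBernIdent} precisely by equating the Stirling--Bernoulli representation (\ref{eq:CoeffCBern}) with the reduced-Bernoulli representation (\ref{eq:relBernGen}) and solving for $B_{2m-2k}^{(2m+2)}(m+1)$ and $B_{2m-2k}^{(2m+1)}(m)$, with the relabelling $n=m-k$ giving the stated index ranges and Pochhammer offsets. Your bookkeeping (cancellation of $1/(2k+1)!$, absorption of the factors $2m+1$ and $4m/2$ into the factorials, and the endpoint checks) matches what the paper leaves implicit in its one-line derivation.
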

\begin{proposition}
\label{Prop:BernHarmonicIdent}
Given an integer $m\geq 0$, the generalized Bernoulli polynomials of even order have
the following special values:
\begin{equation}
\label{eq:BernEvenHarmonic}
\begin{split}
B^{(2m+2)}_{2m}(m+1)&=\frac{(-1)^m (m!)^2}{2m+1}, \\
B^{(2m+2)}_{2m-2}(m+1)&=\frac{(-1)^{m+1}(2m-2)!}{(2m+1)!} (m!)^2 6 H_{m}^{(2)},\\
B^{(2m+2)}_{2m-4}(m+1)&=\frac{(-1)^{m}5!(2m-4)!}{2(2m+1)!} (m!)^2 \left({H_m^{(2)}}^2-H_m^{(4)}\right),\\
B^{(2m+2)}_{2m-6}(m+1)&=\frac{(-1)^{m+1}7!(2m-6)!}{6(2m+1)!} (m!)^2 \left({H_m^{(2)}}^3
    -3H_m^{(2)} H_m^{(4)}+2H_m^{(6)}\right);\\
\vdots
\end{split}
\end{equation}
and the special values for Bernoulli polynomials of even order are
\begin{equation}
\label{eq:BernOddHarmonic}
\begin{split}
B_0^{(1)}(0)=&1,\quad B^{(2m+1)}_{2m}(m)=0\quad \text{for }m\geq 1, \\
B^{(2m+1)}_{2m-2}(m)&=\frac{(-1)^{m+1}2(2m-2)!}{(2m)!} ((m-1)!)^2,\\
B^{(2m+1)}_{2m-4}(m)&=\frac{(-1)^{m}4!(2m-4)!}{(2m)!} ((m-1)!)^2 {H_{m-1}^{(2)}},\\
B^{(2m+1)}_{2m-6}(m)&=\frac{(-1)^{m+1}6!(2m-6)!}{2(2m)!} ((m-1)!)^2 \left({H_m^{(2)}}^2-H_m^{(4)}\right).\\
\vdots
\end{split}
\end{equation}
\end{proposition}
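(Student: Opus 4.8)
The plan is to bypass the Bell-polynomial and Stirling-number representations of Lemma~\ref{lemma:relCBernGen} entirely and instead exploit the fact that the gamma ratios underlying these coefficients factor completely into linear factors, so that their power-series coefficients are elementary symmetric functions that convert directly into harmonic numbers. First I would record the product form of the relevant ratio, which is already implicit in the factorisation used in Lemma~\ref{lem:oddbinomial}:
\begin{equation*}
\frac{\Gamma(z+m+1)}{\Gamma(z-m)}=\prod_{j=-m}^{m}(z+j)=z\prod_{j=1}^{m}\left(z^2-j^2\right).
\end{equation*}
Expanding the product, the coefficient of $z^{2k+1}$ equals $(-1)^{m-k}e_{m-k}(1^2,\dots,m^2)$, where $e_r$ is the $r$-th elementary symmetric polynomial. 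Comparing with the coefficient $\frac{(2m+1)!}{(2k+1)!}\frac{B^{(2m+2)}_{2m-2k}(m+1)}{(2m-2k)!}$ supplied by Theorem~\ref{thm:gammaratiorepgen} and solving for the Bernoulli polynomial gives
\begin{equation*}
B^{(2m+2)}_{2m-2k}(m+1)=(-1)^{m-k}\frac{(2k+1)!\,(2m-2k)!}{(2m+1)!}\,e_{m-k}(1^2,\dots,m^2).
\end{equation*}

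The second step is purely symmetric-function bookkeeping. Using $e_{m-r}(x_1,\dots,x_m)=e_m(x)\,e_r(1/x_1,\dots,1/x_m)$ with $x_j=j^2$, so that $e_m=(m!)^2$ and the power sums of the reciprocals are $p_s=\sum_{j=1}^m j^{-2s}=H^{(2s)}_m$, Newton's identities give $e_1=H^{(2)}_m$, $e_2=\tfrac12\big((H^{(2)}_m)^2-H^{(4)}_m\big)$ and $e_3=\tfrac16\big((H^{(2)}_m)^3-3H^{(2)}_mH^{(4)}_m+2H^{(6)}_m\big)$. Substituting $k=0,1,2,3$ then reproduces the four displayed values in~(\ref{eq:BernEvenHarmonic}), the case $k=0$ using only $e_m=(m!)^2$.

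For the family $B^{(2m+1)}_{2m-2k}(m)$ I would start from the combination in~(\ref{eq:gammaratiorepOdd}) and factor out the $2m-1$ common linear factors to obtain
\begin{equation*}
\frac{\Gamma(z+m)}{\Gamma(z-m)}+\frac{\Gamma(z+m+1)}{\Gamma(z-m+1)}=2z^2\prod_{j=1}^{m-1}\left(z^2-j^2\right),
\end{equation*}
whose coefficient of $z^{2k}$ is $2(-1)^{m-k}e_{m-k}(1^2,\dots,(m-1)^2)$. Matching against~(\ref{eq:gammaratiorepOdd}) yields
\begin{equation*}
B^{(2m+1)}_{2m-2k}(m)=(-1)^{m-k}\frac{(2k)!\,(2m-2k)!}{(2m)!}\,e_{m-k}(1^2,\dots,(m-1)^2),
\end{equation*}
and the same Newton-identity conversion, now over the $m-1$ values $1^2,\dots,(m-1)^2$ with power sums $H^{(2s)}_{m-1}$, produces~(\ref{eq:BernOddHarmonic}). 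In particular $k=0$ gives $B^{(2m+1)}_{2m}(m)=0$ for $m\geq1$, recovering Lemma~\ref{lemma:ZerosGenBer}, since $e_m$ of only $m-1$ variables vanishes.

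There is no genuine obstacle once the two product forms are in hand, as the statement then reduces to coefficient extraction plus Newton's identities. The only thing demanding care is the index bookkeeping, and in particular the distinction between the two families: the first rests on $m$ squares and hence involves $H^{(2s)}_m$, whereas the second rests on $m-1$ squares and involves $H^{(2s)}_{m-1}$. This same distinction indicates that the harmonic numbers in the last line of~(\ref{eq:BernOddHarmonic}) should carry the subscript $m-1$, consistently with the $B^{(2m+1)}_{2m-4}$ line above it.
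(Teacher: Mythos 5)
Your proof is correct, and it takes a genuinely different route from the paper's. The paper obtains these values by evaluating the coefficients $c^{(m)}_{2i+1}$ and $d^{(m)}_{2i}$ through the Bell-polynomial representation of Lemma~\ref{lemma:relCBernGen}: derivatives of the gamma ratios are expanded via Fa\`a di Bruno in incomplete Bell polynomials of polygamma functions, the apparent singularities of $\psi^{(n)}(-m)$ are cancelled using the reflection formula, the surviving polygamma values are turned into zeta values and harmonic numbers, and the results are finally equated with~(\ref{eq:relBernGen}). You bypass all of that special-function machinery: the complete factorizations $\Gamma(z+m+1)/\Gamma(z-m)=z\prod_{j=1}^m(z^2-j^2)$ and, for $m\geq 1$, $\Gamma(z+m)/\Gamma(z-m)+\Gamma(z+m+1)/\Gamma(z-m+1)=2z^2\prod_{j=1}^{m-1}(z^2-j^2)$ (the case $m=0$, i.e.\ $B_0^{(1)}(0)=1$, being trivial from the definition) reduce everything to coefficient extraction, so that comparison with Theorem~\ref{thm:gammaratiorepgen} and Proposition~\ref{prop:gammaratiorep} gives the closed forms
\begin{align*}
B^{(2m+2)}_{2m-2k}(m+1)&=(-1)^{m-k}\frac{(2k+1)!\,(2m-2k)!}{(2m+1)!}\,e_{m-k}(1^2,\dots,m^2),\\
B^{(2m+1)}_{2m-2k}(m)&=(-1)^{m-k}\frac{(2k)!\,(2m-2k)!}{(2m)!}\,e_{m-k}(1^2,\dots,(m-1)^2),
\end{align*}
after which Newton's identities convert the elementary symmetric polynomials into harmonic numbers. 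This buys two things the paper's argument does not deliver: a formula valid for \emph{every} $k$ rather than only the first few entries, and full rigor where the paper's computation is only sketched (``we evaluated the coefficient \dots\ following these steps and we found''), with no divergent quantities to cancel along the way.

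Your closing remark about the subscripts is also correct and flags a genuine typo in the statement: the last line of~(\ref{eq:BernOddHarmonic}) must carry $H_{m-1}^{(2)}$ and $H_{m-1}^{(4)}$, not $H_m^{(2)}$ and $H_m^{(4)}$. A concrete check at $m=3$: the left-hand side is $B_0^{(7)}(3)=1$, and your formula gives $\tfrac{1}{2}(2!)^2\bigl((H_2^{(2)})^2-H_2^{(4)}\bigr)=2\bigl(\tfrac{25}{16}-\tfrac{17}{16}\bigr)=1$, whereas the printed version with $H_3$ yields $2\cdot\tfrac{7}{9}\neq 1$. (The same slip appears in the paper's intermediate value for $d_6^{(m)}$, which moreover carries a spurious factor of $2$ relative to consistency with~(\ref{eq:relBernGen}).)
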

%
\begin{proof}
It is possible to compute the coefficient $c_{2i+1}^{(m)}$ for a generic integer $m$ and for
a specific value of $i$ from its representation in (\ref{eq:CoeffCBell}).
The Gamma function in the denominator of (\ref{eq:CoeffCBell}) can be moved into
the nominator using the reflection formula
\begin{equation*}
\Gamma (1-z)\Gamma (z)=\frac{\pi}{\sin(\pi z)},\qquad z\not \in \mathbb {Z}.
\end{equation*}
Deriving the previous formula we obtain the reflection formula for the Polygamma function
\begin{equation*}
\psi(1-z)-\psi(z)=\pi\cot(\pi z)
\end{equation*}
and deriving again
\begin{equation*}
(-1)^n\psi^{(n)}(1-z)-\psi(z)=\frac{\D^n}{\D z^n}\pi\cot(\pi z).
\end{equation*}
Using the reflection formula for the Polygamma functions we can cancel the apparent
divergences coming from the $\psi^{(n)}(-m)$ inside the Bell polynomials.
Then, the non diverging Polygamma functions can be written in terms of the zeta function
and of $H_{z}^{(r)}$, which is the Harmonic numbers of order $r$:
\begin{equation*}
\psi^{(n)}(z)=\zeta(2,m)=(-1)^{n+1}n!\left(\zeta(n+1)-H_{z-1}^{(n+1)}\right).
\end{equation*}
We evaluated the coefficient $c_{2i+1}^{(m)}$ for particular values of $i$ following
these steps and we found:
\begin{equation}
\label{coeffcH}
\begin{split}
c_1^{(m)}&=\cos(\pi\,m)\Gamma^2 (m+1)=(-1)^m (m!)^2, \\
c_3^{(m)}&=(-1)^{m+1} (m!)^2 H_m^{(2)},\quad m\geq 1,\\
c_5^{(m)}&=\frac{(-1)^{m}}{2} (m!)^2 \left({H_m^{(2)}}^2-H_m^{(4)}\right),\quad m\geq 2,\\
c_7^{(m)}&=\frac{(-1)^{m+1}}{6} (m!)^2 \left({H_m^{(2)}}^3
    -3H_m^{(2)} H_m^{(4)+2H_m^{(6)}}\right),\quad m\geq 3,\\
\vdots\,\,.
\end{split}
\end{equation}
Equating the previous equations with the one in (\ref{eq:relBernGen}) we obtain the (\ref{eq:BernEvenHarmonic}).
Instead for the coefficient $d_i^{(m)}$ we find
\begin{equation*}
\begin{split}
d_0^{(0)}&=2,\quad d_0^{(m)}=0\quad m\geq 1, \\
d_2^{(m)}&=(-1)^{m+1}2((m-1)!)^2,\quad m\geq 1,\\
d_4^{(m)}&=(-1)^{m}2((m-1)!)^2 H_{m-1}^{(2)},\quad m\geq 2,\\
d_6^{(m)}&=(-1)^{m+1}2((m-1)!)^2 \left({H_m^{(2)}}^2-H_m^{(4)}\right),\quad m\geq 3,\\
\vdots\,\,.
\end{split}
\end{equation*}
In this way we could obtain all the values $B^{(2m+1)}_{2m-2k}(m)$ for a chosen $k$
as in (\ref{eq:BernOddHarmonic}).
\end{proof}
Before moving to the functional relations of the hyperbolic series we anticipate that
the polynomial $\mathcal{B}$ in (\ref{eq:PolynB}) is derived evaluating the residue of a function
in a specific point. The details of the calculation of the residue are reported in the
Appendix~\ref{sec:residue}. There, it is evaluated in two different methods. When equating the two
results one found a relation between the reduced Bernoulli polynomials and the ordinary Bernoulli
polynomial.
\begin{proposition}\label{prop:RelOrdGenBer}
Given an integer $n$, the reduced Bernoulli polynomial is
\begin{equation}
\label{eq:RelOrdGenBer}
B_n^{(2m+2)}(m+1)=\sum_{s=1}^n \frac{(2m+2)!}{(2m+2-s)!}
	Y_{n,s}\left(B_1(1/2),B_2(1/2),\dots,B_{n-s+1}(1/2)\right).
\end{equation}
\end{proposition}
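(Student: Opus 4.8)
The plan is to read the identity directly off the exponential generating function of $B_n^{(2m+2)}(m+1)$, bypassing the two residue evaluations of Appendix~\ref{sec:residue} entirely. First I would observe that the shift factor $\E^{(m+1)x}$ appearing in the generating function of Definition~\ref{def:bernpoly} can be absorbed into the power, since $\E^{(m+1)x}=(\E^{x/2})^{2m+2}$. This collapses the generalized Bernoulli generating function into a pure $(2m+2)$-th power,
\begin{equation*}
\sum_{n=0}^\infty \frac{B_n^{(2m+2)}(m+1)}{n!}x^n=\frac{x^{2m+2}\,\E^{(m+1)x}}{(\E^x-1)^{2m+2}}=\left(\frac{x\,\E^{x/2}}{\E^x-1}\right)^{2m+2}.
\end{equation*}
Writing $g(x)=x\,\E^{x/2}/(\E^x-1)$ for the base, Definition~\ref{def:bernpoly} identifies it as the generating function of the ordinary Bernoulli polynomials at $t=1/2$, namely $g(x)=\sum_{j=0}^\infty B_j(1/2)\,x^j/j!$, so that $g^{(j)}(0)=B_j(1/2)$ and in particular $g(0)=B_0(1/2)=1$.

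Next I would apply the Faà di Bruno formula to the composite $f(g(x))$ with outer function $f(u)=u^{2m+2}$, in exactly the Bell-polynomial form already used in the proof of Lemma~\ref{lemma:relCBernGen}. Its derivatives are $f^{(s)}(u)=\frac{(2m+2)!}{(2m+2-s)!}\,u^{2m+2-s}$, and since $g(0)=1$ one gets $f^{(s)}(g(0))=\frac{(2m+2)!}{(2m+2-s)!}$. The formula then yields
\begin{equation*}
\frac{\D^n}{\D x^n}g(x)^{2m+2}\Big|_{x=0}=\sum_{s=1}^n \frac{(2m+2)!}{(2m+2-s)!}\,Y_{n,s}\!\left(g'(0),g''(0),\dots,g^{(n-s+1)}(0)\right),
\end{equation*}
and substituting $g^{(j)}(0)=B_j(1/2)$ turns the arguments of each incomplete Bell polynomial into $B_1(1/2),\dots,B_{n-s+1}(1/2)$.

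Finally I would match coefficients. By the first display the left-hand side is the $n$-th derivative at the origin of $\sum_k B_k^{(2m+2)}(m+1)\,x^k/k!$, which equals $B_n^{(2m+2)}(m+1)$; equating this with the Faà di Bruno expansion gives precisely~(\ref{eq:RelOrdGenBer}) for $n\geq 1$. I expect no genuine obstacle here: the only step needing care is the elementary identity $\E^{(m+1)x}=(\E^{x/2})^{2m+2}$ that exposes the pure power, after which the falling-factorial structure $(2m+2)!/(2m+2-s)!$ in the statement is forced by the derivatives of $u\mapsto u^{2m+2}$. This route is considerably more direct than the agreement of the two independent residue computations in Appendix~\ref{sec:residue}, which is what the proposition ultimately encodes.
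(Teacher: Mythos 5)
Your proof is correct and is essentially the paper's own argument: Appendix~\ref{sec:residue} proves the proposition by expanding the very same $(2m+2)$-th power two ways --- once via the generalized Bernoulli generating function (giving $F^{(n)}(\phi,0)=2^{2m+2}B_n^{(2m+2)}(m+1)\phi^n$) and once via Fa\`a di Bruno with outer function $u\mapsto u^{2m+2}$ and inner function $g(z)=\phi z/\sinh(z\phi/2)$, whose derivatives at the origin are $2B_j(1/2)\phi^j$ --- and then equating the two. Your version differs only in normalization (your base function has $g(0)=1$ and no $\phi$, so you avoid the factor-of-two and Bell-polynomial homogeneity bookkeeping the paper must do), which is a cosmetic simplification rather than a different route.
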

The previous identity generates the reduced Bernoulli polynomials similarly to the
formula in~\cite[Eq. (23)]{2016Elezovic}
\begin{equation*}
B_{2n}^{(2m+2)}(m+1)=-\frac{m+1}{n}\sum_{k=0}^{n-1}\genfrac(){0pt}{0}{2n}{2k}
    B_{2n-2k} B_{2k}^{(2m+2)}(m+1).
\end{equation*}
%
\begin{corollary}\label{cor:SpecialValuesGenBer}
From (\ref{eq:RelOrdGenBer}) we obtain
\begin{align*}
B_0^{(2m+2)}(m+1) = & 1,\quad B_{2n+1}^{2m+2}(m+1)=0,\\
B_2^{(2m+2)}(m+1) = & -\frac{m+1}{6},\\
B_4^{(2m+2)}(m+1) = & \frac{5 m^2+11 m+6}{60},\\
B_6^{(2m+2)}(m+1) = & \frac{-35 m^3-126 m^2-151 m-60}{504},\\
B_8^{(2m+2)}(m+1) = & \frac{175 m^4+910 m^3+1781 m^2+1550 m+504}{2160},\\
B_{10}^{(2m+2)}(m+1) = & \frac{-385 m^5-2695 m^4-7601 m^3-10769 m^2-7638 m-2160}{3168}.\\
\vdots
\end{align*}
\end{corollary}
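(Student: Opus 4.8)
The plan is to obtain every entry by direct evaluation of the generating identity~(\ref{eq:RelOrdGenBer}), since Corollary~\ref{cor:SpecialValuesGenBer} is a pure consequence of Proposition~\ref{prop:RelOrdGenBer}. The first step is to assemble the input data, namely the Bernoulli polynomials at the midpoint. Using $B_j(1/2)=(2^{1-j}-1)B_j$ one has $B_{2k+1}(1/2)=0$ for all $k\geq 0$, while the even values $B_2(1/2)=-1/12$, $B_4(1/2)=7/240$, $\dots$ are nonzero. The case $n=0$ is handled separately: the sum in~(\ref{eq:RelOrdGenBer}) is then empty, and $B_0^{(2m+2)}(m+1)=1$ follows at once from the generating function in Definition~\ref{def:bernpoly}.

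Next I would dispose of the odd-index claim $B_{2n+1}^{(2m+2)}(m+1)=0$. The cleanest route is the reflection symmetry $B_n^{(m)}(t)=(-1)^nB_n^{(m)}(m-t)$ recorded in the proof of Proposition~\ref{prop:gammaratiorep}: specializing to order $2m+2$ and $t=m+1$ gives $B_n^{(2m+2)}(m+1)=(-1)^nB_n^{(2m+2)}(m+1)$, which forces the value to vanish whenever $n$ is odd. The same conclusion also drops out of~(\ref{eq:RelOrdGenBer}) itself: every monomial of $Y_{n,s}(B_1(1/2),\dots)$ carries arguments $x_i=B_i(1/2)$ that vanish for odd $i$, so only partitions of $n$ into even parts contribute, and no such partition exists for odd $n$.

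For the even-index entries I would, for each $n=2p$, expand the finite sum $\sum_{s=1}^{2p}\frac{(2m+2)!}{(2m+2-s)!}\,Y_{2p,s}$ and compute the incomplete Bell polynomials $Y_{2p,s}$ from the definition in Lemma~\ref{lemma:relCBernGen}, immediately discarding every monomial that contains an odd-indexed argument. After inserting the surviving even values $B_{2j}(1/2)$ and the falling-factorial weights, one collects powers of $m$. I would run $n=2$ and $n=4$ explicitly to pin down the Bell-polynomial conventions; for instance, at $n=2$ only $Y_{2,1}=x_2$ survives (since $Y_{2,2}=x_1^2$ vanishes), giving $(2m+2)B_2(1/2)=-(m+1)/6$, and at $n=4$ the contributing monomials $x_4$ from $Y_{4,1}$ and $3x_2^2$ from $Y_{4,2}$ (its $x_1x_3$ term being killed by $x_1=x_3=0$) reproduce $(5m^2+11m+6)/60$. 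The higher rows are obtained mechanically in the same way.

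The calculation presents no conceptual obstacle, being a corollary; the only difficulty is bookkeeping. The number of Bell-polynomial partitions of $n$ grows quickly, but the vanishing of all odd-indexed arguments prunes the sum drastically, leaving only partitions into even parts. The care required is therefore in keeping the Bell-polynomial multinomial coefficients consistent with the falling factorials $(2m+2)!/(2m+2-s)!$, and in simplifying the resulting rational expression in $m$ to the reduced polynomial form displayed in the corollary.
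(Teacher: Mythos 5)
Your proposal is correct and follows essentially the same route as the paper, which offers no separate argument for this corollary beyond direct evaluation of the identity~(\ref{eq:RelOrdGenBer}) with the midpoint values $B_j(1/2)=(2^{1-j}-1)B_j$; your worked cases $n=2$ and $n=4$ (and the pruning to partitions into even parts) reproduce the displayed values exactly. Your observation that $n=0$ must be handled by the generating function rather than by the formula (whose sum is empty there), and that the odd-index vanishing also follows from the reflection symmetry $B_n^{(m)}(t)=(-1)^nB_n^{(m)}(m-t)$, is a point of care the paper leaves implicit.
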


\section{Functional relations for hyperbolic series}
\label{sec:FuncRel}
We give two different proofs of the functional relations for the hyperbolic series.
In the first one, the functional relation is derived by using the summation theorem applied to
the sum that defined the hyperbolic series. In the second one, the functional relations is
inherited by the functional relation for the Lambert series. Before doing that we prove
the functional relation of the Lambert series using the summation theorem.
\begin{theorem}[see \cite{BerndtVol2} Entry 13 Chapter 14]\label{thm:funcrelLambert}
Let $\phi$ be a complex number such that $\Re(\phi)>0$ and let $m\geq 0$ be an integer, then
the Lambert series satisfies the functional equation
\begin{equation}
\label{eq:funcrelL}
\begin{split}
\mathcal{L}_{\E^{-\phi}}(2m+1)=&-(-1)^m\left(\frac{2\pi}{\phi}\right)^{2m+2}\mathcal{L}_{\E^{-4\pi^2/\phi}}(2m+1)
-\frac{\delta_{m,0}}{2\phi}+\\
&+\frac{1}{2}\frac{B_{2m+2}}{2m+2}\left[1+\frac{(-1)^m(2\pi)^{2m+2}}{\phi^{2m+2}}\right].
\end{split}
\end{equation}
\end{theorem}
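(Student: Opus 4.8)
The plan is to realize the Lambert series $\mathcal{L}_{\E^{-\phi}}(2m+1)=\sum_{k=1}^\infty k^{2m+1}\E^{-k\phi}/(1-\E^{-k\phi})$ as the sum of residues of a suitable meromorphic function, then re-evaluate the same contour integral by deforming it and collecting a different set of residues; equating the two evaluations produces the functional equation. The natural kernel to use is a function whose poles along the real axis reproduce the summand and whose poles along the imaginary axis (coming from the zeros of $\E^{-\phi z}-1$, i.e. $z=2\pi\I n/\phi$) reproduce the companion series $\mathcal{L}_{\E^{-4\pi^2/\phi}}(2m+1)$ after the inversion $\phi\mapsto 4\pi^2/\phi$. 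Concretely I would integrate
\begin{equation*}
F(z)=\frac{\pi z^{2m+1}\cot(\pi z)}{\E^{-\phi z}-1}
\end{equation*}
(or a closely related variant using $\coth$) around a large contour, e.g. a circle or a sequence of expanding squares $C_N$ avoiding all poles. The factor $\pi\cot(\pi z)$ has simple poles at every integer $z=k$ with residue $1$, so at $z=k\geq 1$ the residue of $F$ reproduces $k^{2m+1}\E^{-k\phi}/(1-\E^{-k\phi})$ up to sign, generating $\mathcal{L}_{\E^{-\phi}}(2m+1)$; the zeros of $\E^{-\phi z}-1$ at $z=2\pi\I n/\phi$ generate, after simplification, the dual Lambert series.

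\textbf{Collecting and identifying the three pole families.}
Next I would classify the poles of $F$ inside $C_N$ into three groups: (i) the integer poles of $\cot$ on the real axis, giving $\mathcal{L}_{\E^{-\phi}}(2m+1)$ and its reflection; (ii) the poles at $z=2\pi\I n/\phi$, giving the inverted series $\mathcal{L}_{\E^{-4\pi^2/\phi}}(2m+1)$ after substituting and using $\cot$ of an imaginary argument; and (iii) the high-order pole at $z=0$, where both $\cot(\pi z)$ and $1/(\E^{-\phi z}-1)$ are singular, so that $F$ has a pole of order $2m+3$. The residue at $z=0$ is where the Bernoulli-number content enters: I would expand $\pi z\cot(\pi z)=\sum_k (-1)^k(2\pi)^{2k}B_{2k}z^{2k}/(2k)!$ and $\phi z/(\E^{\phi z}-1)=\sum_j B_j(\phi z)^j/j!$, multiply the two generating functions, and read off the coefficient of $z^{-1}$. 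This Cauchy-product extraction is exactly what yields the bracketed term $\tfrac12\frac{B_{2m+2}}{2m+2}[1+(-1)^m(2\pi)^{2m+2}/\phi^{2m+2}]$ together with the $-\delta_{m,0}/(2\phi)$ arising from the $m=0$ boundary case.

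\textbf{The convergence estimate and the main obstacle.}
The crucial analytic step is showing that $\oint_{C_N}F\,\D z\to 0$ as $N\to\infty$ along a well-chosen family of contours, so that the total sum of residues vanishes and the three groups balance. Here the competition between the growth of $z^{2m+1}$ and the decay (or lack thereof) of $1/(\E^{-\phi z}-1)$ on different parts of the contour is delicate: on rays where $\Re(\phi z)>0$ the denominator is harmless, but on rays where $\Re(\phi z)<0$ one must use $1/(\E^{-\phi z}-1)=-1-1/(\E^{\phi z}-1)$ to re-expose decay, and the constant $-1$ contributes an extra polynomial term that must be tracked. I expect this contour estimate, and the careful bookkeeping of which half-plane each arc lies in relative to the line $\Re(\phi z)=0$, to be the main obstacle; the factor $\pi\cot(\pi z)$ stays bounded away from the real integers, which helps, but matching the orientation and the reflection $z\mapsto -z$ to fold the two halves of each series together requires attention to signs. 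Once the integral is shown to vanish, setting the sum of all residues to zero and solving for $\mathcal{L}_{\E^{-\phi}}(2m+1)$ yields~(\ref{eq:funcrelL}) directly.
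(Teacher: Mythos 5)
Your overall strategy (a cotangent-type kernel integrated over expanding contours, with the two Lambert series arising from the residues at the integers and at $z=2\pi\I n/\phi$) is the same one the paper uses, but two of your central claims fail for your specific integrand $F(z)=\pi z^{2m+1}\cot(\pi z)/(\E^{-\phi z}-1)$. First, the origin is \emph{not} a pole of order $2m+3$: the factor $z^{2m+1}$ in the numerator cancels the two simple zeros of the denominators, so $F$ is regular at $z=0$ for $m\geq 1$ and has only a simple pole for $m=0$, with residue $-1/\phi$. Your Cauchy-product extraction of the coefficient of $z^{-1}$ therefore produces only the $\delta_{m,0}$ term; it cannot produce the bracketed Bernoulli term. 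You are importing the mechanism of the companion identity (\ref{eq:LambertRelNegOdd}), where the exponent of $z$ is \emph{negative}, the origin genuinely is a high-order pole, and its residue yields the Ramanujan polynomial (\ref{eq:RamPol}); here the exponent is positive and the Bernoulli content must enter elsewhere. Second, the step you flag as the main obstacle is not merely delicate, it is false: in the region $\Re(\phi z)>0$ (near the \emph{positive} real axis when $\Re\phi>0$ --- you have the two half-planes swapped) one has $1/(\E^{-\phi z}-1)\to -1$, so $|F(z)|\sim\pi|z|^{2m+1}$ and $\oint_{C_N}F\,\D z$ grows like $N^{2m+2}$ rather than vanishing. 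Equivalently, the residue sums themselves diverge: at $z=k>0$ the residue is $k^{2m+1}/(\E^{-k\phi}-1)=-k^{2m+1}-k^{2m+1}\E^{-k\phi}/(1-\E^{-k\phi})$, and $\sum_k k^{2m+1}$ diverges; the same divergence appears at the poles $z=2\pi\I n/\phi$, where $\cot(\pi z)$ tends to a nonzero constant instead of decaying. So the identity ``sum of all residues $=0$'' that your plan rests on is not available.

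The paper's proof is instructive precisely because it confronts these divergences instead of assuming they are absent. It uses a contour pinched onto the real axis (so only the integer poles are enclosed), and every divergent piece --- the $\sum_k k^{2m+1}$ produced by the reflection $f(\phi,-k)=f(\phi,k)+k^{2m+1}$, the left vertical segment, and $\int_0^\infty z^{2m+1}\D z$ --- is shown to be independent of $\phi$ and collected into a single constant $D$, which must be finite because every other term in the identity is finite. The dual series comes from closing the horizontal lines in the lower half-plane; the term $\frac{(-1)^m}{2}\frac{B_{2m+2}}{2m+2}(2\pi/\phi)^{2m+2}$ comes not from a residue at the origin but from the convergent integral $\int_0^\infty z^{2m+1}/(\E^{\phi z}-1)\,\D z=(2m+1)!\,\zeta(2m+2)/\phi^{2m+2}$; and the remaining constant, the ``$1$'' inside your bracket, is obtained only at the very end by matching both sides against the known asymptotic expansion of the Lambert series as $\phi\to0$, which forces $D=\frac12\frac{B_{2m+2}}{2m+2}$. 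Your sketch contains none of these three mechanisms, and without them (or a substitute, such as a rigorous zeta-regularization of $\sum_k k^{2m+1}$ or a Mellin-transform argument) the right-hand side of (\ref{eq:funcrelL}) cannot be recovered from your contour.
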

\begin{proof}
From the definition of Lambert series we define a function $f(\phi,k)$ as
\begin{equation*}
\mathcal{L}_{\E^{-\phi}}(2m+1)=\sum_{k=1}^\infty \frac{k^{2m+1}\E^{-\phi k}}{1-\E^{-\phi k}}
=\sum_{k=1}^\infty f(\phi,k).
\end{equation*}
Notice that if we exchange $k$ to $-k$ in $f$ we obtain the identity
\begin{equation}
\label{eq:ReflectionfLam}
f(\phi,-k)=f(\phi,k)+k^{2m+1}.
\end{equation}
Then, from (\ref{eq:ReflectionfLam}) if we sum on all positive and negative $k$ we obtain
\begin{equation*}
\sum_{k=-\infty,k\neq 0}^\infty \frac{k^{2m+1}\E^{-\phi k}}{1-\E^{-\phi k}}=2\mathcal{L}_{\E^{-\phi}}(2m+1)
+\sum_{k=1}^\infty k^{2m+1}.
\end{equation*}
The last term is divergent and does not depend on $\phi$, we denote it with
\begin{equation*}
    D_1(2m+1)=\sum_{k=1}^\infty k^{2m+1}.
\end{equation*}
The Lambert series is nevertheless finite and can be obtained as
\begin{equation*}
\mathcal{L}_{\E^{-\phi}}(2m+1)=\frac{1}{2}\sum_{k=-\infty,k\neq 0}^\infty f(\phi,k) - \frac{1}{2}D_1(2m+1).
\end{equation*}
To adopt the summation theorem we promote $k$ to a complex variable $z$.
The function $f(\phi,z)$ has poles in $z=2\pi\I n/\phi$ for $n=1,2,3,\dots$; in particular
it does not have poles on the real axis.
Consider now the meromorphic function $h$
\begin{equation*}
h(z)=\frac{2\pi\,\I}{\E^{2\pi\,\I\, z}-1};
\end{equation*}
whose only poles are single poles at the integers (including the zero) where the residues are all 1.
Let $C_N$ be a rectangular closed curve enclosing $-N,-N+1,\dots,0,1,$ $\dots,N$  and cutting the
imaginary axis in $\pm\I\epsilon$ with $\epsilon<\Im(2\pi\I/\phi)$ (see Figure~\ref{fig:PathCN}),
then the residue theorem gives:
\begin{equation*}
\sum_{k=-N,k\neq 0}^N f(\phi,k)=\oint_{C_N}f(\phi,z) h(z)\frac{\D z}{2\pi\I}
-\text{Res}\left[h(z)f(\phi,z),z=0\right].
\end{equation*}
\begin{figure}[t!hb]
\centering
\includegraphics[width=0.4\columnwidth]{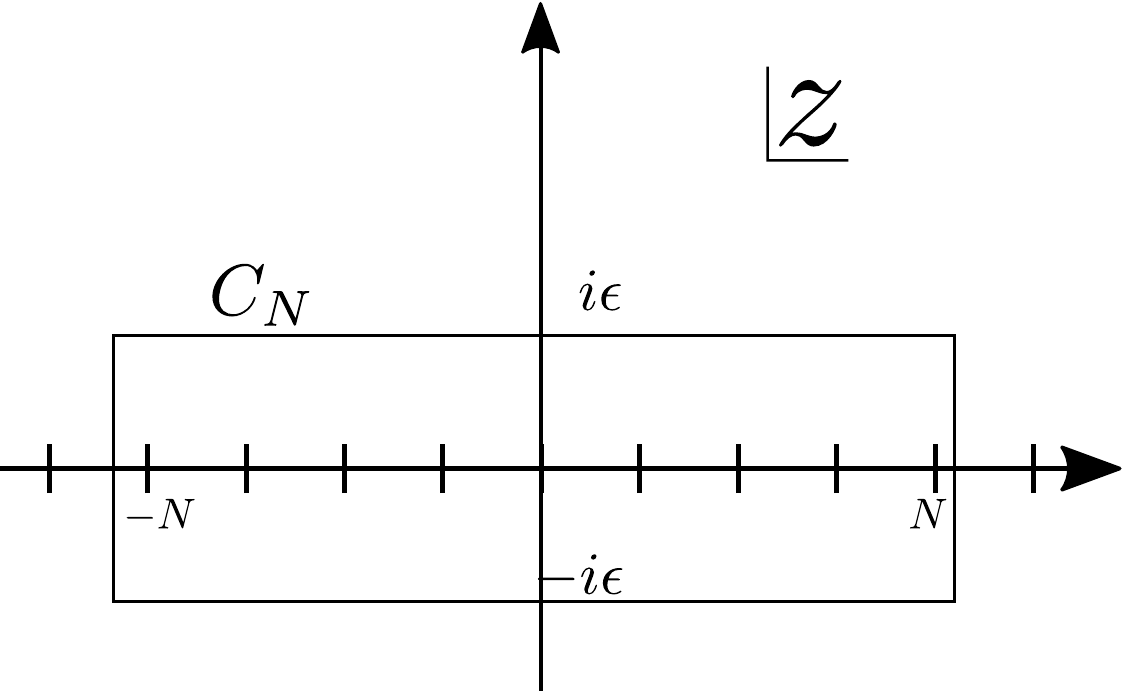}
\caption{The $C_N$ path. If you choose $\epsilon<\Im(2\pi\I/\phi)$ none of the pole of $f$ is enclosed
    by $C_N$. }
\label{fig:PathCN}
\end{figure}
The summation theorem~\cite{book:145191} is obtained performing the limit for $N\to\infty$.
In this case, as seen earlier, the infinite sum $\sum_{k\neq 0} f(\phi, k)$ reproduces the Lambert series:
\begin{equation*}
\mathcal{L}_{\E^{-\phi}}(2m+1)=
\frac{1}{2}\oint_C f(\phi,z) h(z)\frac{\D z}{2\pi\I}
-\frac{1}{2}\text{Res}\left[h(z)f(\phi,z),z=0\right]-\frac{1}{2}D_1(2m+1)
\end{equation*}
where we denote with Res$\left[f(z),z=z_0\right]$ the residue of $f$ in $z_0$ and
$C$ is the limit of the closed path $C_N$ for $N\to\infty$ and $\epsilon$ approaching to $0^+$.
It is easy to verify that
\begin{equation*}
-\frac{1}{2}\text{Res}\left[h(z)f(\phi,z),z=0\right]=-\frac{\delta_{m,0}}{2\phi}.
\end{equation*}

The integral on the complex path can be decomposed in the four segments of the rectangle:
\begin{equation*}
\begin{split}
\oint_C f(\phi,z) h(z)\frac{\D z}{2\pi\I}=&
\int_{-\infty-\I 0^+}^{+\infty-\I 0^+} f(\phi,z) h(z)\frac{\D z}{2\pi\I}
+\int_{+\infty+\I 0^+}^{-\infty+\I 0^+} f(\phi,z) h(z)\frac{\D z}{2\pi\I}\\
&+\int_{V^+} f(\phi,z) h(z)\frac{\D z}{2\pi\I}
+\int_{V^-} f(\phi,z) h(z)\frac{\D z}{2\pi\I},
\end{split}
\end{equation*}
where $V^+$ is the vertical line in the area where $\Re(z)>0$,
while $V^-$ is the vertical line for $\Re(z)<0$.
Noticing that
\begin{equation*}
    \lim_{\Re(z)\to\infty} f(\phi,z)h(z)=0
\end{equation*}
we conclude that the integral in $V^+$ is vanishing in the limit. Instead the integral in
$V^-$ is diverging, indeed we have
\begin{equation*}
    \lim_{\Re(z)\to-\infty} f(\phi,z)h(z)=\infty.
\end{equation*}
Since the Lambert series is converging and finite, we expect that at the end all the divergent parts cancel.
The integral over $V^-$ can be re-written performing the change of variable from $z$ to $-z$ and
taking advantage of (\ref{eq:ReflectionfLam}) and of the relation
\begin{equation*}
    h(-z)=-h(z)-2\pi\I.
\end{equation*}
We find
\begin{equation*}
\begin{split}
\int_{V^-} f(\phi,z) h(z)\frac{\D z}{2\pi\I}=&
\lim_{\substack{\epsilon\to 0 \\ R\to \infty}}\int_{-R+\I\epsilon}^{-R-\I\epsilon} f(\phi,z) h(z)\frac{\D z}{2\pi\I}\\
=&\int_{V^+} f(\phi,z) h(z)\frac{\D z}{2\pi\I}+\lim_{\substack{\epsilon\to 0 \\ R\to \infty}}
\int_{-R+\I\epsilon}^{-R-\I\epsilon} z^{2m+1}( h(z)+2\pi\I)\frac{\D z}{2\pi\I}.
\end{split}
\end{equation*}
Since the integral in $V^+$ is vanishing only the second term contributes. Then, we see that
the integral in $V^-$ is diverging but does not depend on $\phi$, we indicate it as
\begin{equation*}
\int_{V^-} f(\phi,z) h(z)\frac{\D z}{2\pi\I}=D_2(2m+1).
\end{equation*}
The integrals on the horizontal paths are made again with residue theorem closing them in the
appropriate way. Since for $\Im(z)<0$ we have
\begin{equation*}
    \lim_{|z|\to\infty} |f(\phi,z)h(z)|=0,
\end{equation*}
the lower horizontal line can be closed in the lower-half plane. The upper horizontal line
can be moved to the lower one performing again the change of variable from $z$ to $-z$.
This time we find
\begin{equation*}
\begin{split}
\int_{+\infty+\I 0^+}^{-\infty+\I 0^+} f(\phi,z) h(z)\frac{\D z}{2\pi\I}=&
\int_{-\infty}^\infty \D z\left[f(\phi,z)+z^{2m+1} \right]+
\int_{-\infty-\I 0^+}^{+\infty-\I 0^+} f(\phi,z) h(z)\frac{\D z}{2\pi\I}\\
&+\int_{-\infty-\I 0^+}^{\infty-\I 0^+}\frac{\D z}{2\pi \I}z^{2m+1} h(z).
\end{split}
\end{equation*}
The second term is identical to the integration to the lower horizontal line.
The last term is vanishing because we can evaluate it with residue theorem closing the path in the lower
half-plane but the integrand does not have poles. Instead the first term can be decomposed in
two parts and transformed again as follows
\begin{equation*}
\begin{split}
\int_{-\infty}^\infty \D z &\left[f(\phi,z)+z^{2m+1} \right]=
\int_{-\infty}^0 \D z\left[f(\phi,z)+z^{2m+1} \right]+\int_{0}^\infty \D z\left[f(\phi,z)+z^{2m+1} \right]\\
=&\int_{0}^\infty\D z\left[f(\phi,z)+z^{2m+1}-z^{2m+1}\right]
    +\int_{0}^\infty\D z f(\phi,z)+\int_{0}^\infty\D z\, z^{2m+1}\\
=&2\int_{0}^{\infty}\D z f(\phi,z) + D_3(2m+1).
\end{split}
\end{equation*}
Again we have decomposed the integral into a diverging part that does not depend on $\phi$ and into a finite
part which gives:
\begin{equation*}
    \int_{0}^{\infty}\D z f(\phi,z)=\frac{(-1)^m}{2}\frac{B_{2m+2}}{2m+2}\left(\frac{2\pi}{\phi}\right)^{2m+2}.
\end{equation*}
The last piece we need to evaluate is the integral over the lower horizontal line.
As said, we close the path with a semicircle in the lower half-plane and we compute it with
the residue theorem. Inside that path the function $h$ does not have any pole but $f$ has
infinite many poles located in $z=-2\pi\I n/\phi$ counted by integer $n$ starting from 1.
As a consequence, the result is
\begin{equation*}
\begin{split}
\int_{-\infty-\I 0^+}^{+\infty-\I 0^+} f(\phi,z) h(z)\frac{\D z}{2\pi\I}=&
\sum_{n=1}^\infty \text{Res}\left[h(z)f(\phi,z),z=-\frac{2\pi\I n}{\phi}\right]\\
=&-(-1)^m \left(\frac{2\pi}{\phi}\right)^{2m+2} \sum_{n=1}^\infty
    \frac{n^{2m+1} \E^{-4\pi^2 n/\phi}}{1-\E^{-4\pi^2 n/\phi}}\\
=&-(-1)^m \left(\frac{2\pi}{\phi}\right)^{2m+2} \mathcal{L}_{\E^{-4\pi^2/\phi}}(2m+1),
\end{split}
\end{equation*}
where in the last step we recognized the Lambert series. If we put all the pieces together we obtain
\begin{equation*}
\begin{split}
\mathcal{L}_{\E^{-\phi}}(2m+1)=&-(-1)^m\left(\frac{2\pi}{\phi}\right)^{2m+2}\mathcal{L}_{\E^{-4\pi^2/\phi}}(2m+1)
-\frac{\delta_{m,0}}{2\phi}+\\
&+\frac{(-1)^m}{2}\frac{B_{2m+2}}{2m+2}\left(\frac{2\pi}{\phi}\right)^{2m+2}+D(2m+1),
\end{split}
\end{equation*}
where $D$ is the sum of all the diverging terms. Since for $\Re(\phi)>0$ all the pieces of the previous
equation are finite then it also follows that $D(2m+1)$ must also be finite. Furthermore, since $D$ does
not depend on $\phi$ it can be fixed choosing a value for $\phi$ and enforcing the relation. In particular
we choose to evaluate the relation as an asymptotic expansion around $\phi=0$. In this case we have
that the contribution from the Lambert series on the r.h.s of the functional relation is vanishing
\begin{equation*}
    \phi^{-2m-2}\mathcal{L}_{\E^{-4\pi^2/\phi}}(2m+1)\to 0,
\end{equation*}
while the asymptotic expansion for Lambert series in $\phi=0$ is~\cite{2016Banerjee}
\begin{equation*}
\mathcal{L}_{\E^{-\phi}}(2m+1)\sim \frac{1}{2}\frac{B_{2m+2}}{2m+2}\left[1+
    (-1)^m \left(\frac{2\pi}{\phi}\right)^{2m+2}\right]-\frac{\delta_{m,0}}{2\phi}
\end{equation*}
then we conclude that it must be
\begin{equation*}
D(2m+1)=\frac{1}{2}\frac{B_{2m+2}}{2m+2}
\end{equation*}
proving the functional relation.
\end{proof}
Functional relation similar to (\ref{eq:funcrelL}) for the Lambert series with odd negative argument
involves the Riemann zeta function and where used in~\cite{2018Banerjee} to give rapid converging
formula of $\zeta(4k\pm 1)$.
The method used in the proof above can not be applied to derive the functional relation for the Lambert
series with even positive argument.
Consequently, the functional relations for the hyperbolic series $S^{(\sinh,1)}_{2m+2}(\phi)$
can not be inferred from those of the Lambert series, neither can be derived from the summation theorem.
Indeed, for even positive argument the Lambert series is
\begin{equation*}
\mathcal{L}_{\E^{-\phi}}(2m)=\sum_{k=1}^\infty \frac{k^{2m}\E^{-\phi k}}{1-\E^{-\phi k}}
=\sum_{k=1}^\infty f(\phi,k),
\end{equation*}
but the reflection properties of $f$
\begin{equation*}
f(\phi,-k)=-f(\phi,k)-k^{2m}
\end{equation*}
does not allow to use the summation theorem. The same problem arise for the hyperbolic
series $S^{(\sinh,1)}_{2m+2}(\phi)$.
Nevertheless, we can still take advantage of the linearity between the hyperbolic series and the Lambert
series (see Proposition~\ref{prop:LambertRel}) to derive its asymptotic expansion in $\phi=0$.
Recently, the authors of~\cite{Dorigoni:2020oon} derived the functional relation for Lambert series
using resurgent expansion and extended their validity to complex arguments and to positive even integer.
They found that for any complex $s$ and $\phi>0$, the Lambert series satisfies
\begin{equation}
\label{eq:LambertResurgent}
\begin{split}
\mathcal{L}_{\E^{-\phi}}(s)=&\frac{\zeta(1+s)\Gamma(1+s)}{\phi^{1+s}}
    +\sum_{k=0}^{\Re(s)+1}\frac{(-\phi)^{k-1}}{\Gamma(k)}\zeta(1-k)\zeta(1-s-k)\\
    &+\mathcal{S}_{\pm}(s,\phi)+\left(\mp\I\frac{\phi}{2\pi}\right)^{-1-s}\mathcal{L}_{\E^{-4\pi^2/\phi}}(s),
\end{split}
\end{equation}
where $\mathcal{S}_{\pm}(s,\phi)$ is the resurgent completion of the Lambert series obtained
starting from its asymptotic expansion. In the second line of (\ref{eq:LambertResurgent}),
$\mathcal{S}_{\pm}(s,\phi)$ together with the Lambert series evaluated in $q=\E^{-4\pi^2/\phi}$
captures the non analytical terms in $\Re(\phi)=0$ of the Lambert series.
For an odd positive integer $s$, the completion $\mathcal{S}_{\pm}(s,\phi)$ is vanishing and the previous
equations reduces to (\ref{eq:funcrelL}).

\begin{proof}[Proof of theorem \ref{thm:funcrelS} with the summation theorem]
Here to prove the functional relations for the hyperbolic series (\ref{eq:baseserie})
\begin{equation*}
S_{2m+2}(\phi)=\sum_{n=1}^\infty \frac{\phi^{2m+2}}{\sinh^{2m+2}\left(n\frac{\phi}{2}\right)}
\end{equation*}
we follow the demonstration of theorem~\ref{thm:funcrelLambert}. In this case we do not
find any apparently divergent term and therefore we do not have to use an asymptotic expansion
to fix the missing constant. Since the hyperbolic series $S_{2m+2}(\phi)$ is an even function
on $\phi$ we can write it as
\begin{equation*}
\begin{split}
S_{2m+2}(\phi)&=\frac{1}{2}\sum_{n=1}^\infty \frac{\phi^{2m+2}}{\sinh^{2m+2}\left(n\frac{\phi}{2}\right)}
+\frac{1}{2}\sum_{n=-\infty}^{-1} \frac{\phi^{2m+2}}{\sinh^{2m+2}\left(n\frac{\phi}{2}\right)}\\
&=\frac{1}{2}\sum_{n\neq 0} \frac{\phi^{2m+2}}{\sinh^{2m+2}\left(n\frac{\phi}{2}\right)}
\equiv \sum_{n\neq 0} f(\phi,n),
\end{split}
\end{equation*}
where we denoted
\begin{equation*}
f(\phi,z)=\frac{\phi^{2m+2}}{2\sinh^{2m+2}\left(z\frac{\phi}{2}\right)}.
\end{equation*}
Notice that the function $f$ given above is a meromorphic function on $z$ with poles of order $2m+2$
located in
\begin{equation}
\label{eq:poli}
z=\frac{2\pi\,\I\, m}{\phi},\quad m\in\mathbb{Z}
\end{equation}
and with $z=0$ the only pole that lies on the real axis. 
As done in the proof of theorem~\ref{thm:funcrelLambert}, we can evaluate the sums using the
summation theorem. With the same notation, it yields
\begin{equation*}
S_{2m+2}(\phi)=\oint_C f(\phi,z) h(z)\frac{\D z}{2\pi\I}-\text{Res}\left[h(z)f(\phi,z),z=0\right].
\end{equation*}
The residue is computed in Appendix \ref{sec:residue} and gives the polynomial in Eq. (\ref{eq:PolynB}):
\begin{equation}
\label{eq:ResidueDef}
\begin{split}
\mathcal{B}_{2m+2}(\phi)&\equiv-\text{Res}\left[h(z)f(\phi,z),z=0\right]\\
&=-2^{2m+1}\sum_{k=0}^{m+1}(2\pi\I)^{2k}\frac{B_{2k}}{(2k)!}\frac{B_{2m+2-2k}^{(2m+2)}(m+1)}{(2m+2-2k)!}\, \phi^{2m+2-2k}.
\end{split}
\end{equation}
Now, we consider the integral
\begin{equation*}
\oint_C f(\phi,z) h(z)\frac{\D z}{2\pi\I}.
\end{equation*}
First, we notice that when the real part of $z$ is very large the integrand goes to zero
\begin{equation*}
\lim_{\Re(z)\to\pm\infty} f(\phi,z) h(z)=0,
\end{equation*}
therefore the integrals on the vertical line of $C$ are vanishing and we are left with
\begin{equation*}
\oint_C f(\phi,z)h(z)\frac{\D z}{2\pi\,\I}=\int_{-\infty-\I 0^+}^{\infty-\I 0^+} f(\phi,z)h(z) \frac{\D z}{2\pi\,\I}
+\int_{+\infty+\I 0^+}^{-\infty+\I 0^+} f(\phi,z)h(z) \frac{\D z}{2\pi\,\I}.
\end{equation*}
In the last integral we change variable $z\to -z$ and taking advantage of the reflection identities
\begin{equation*}
f(\phi,-z)=f(\phi,z),\qquad h(-z)=-2\pi\I-h(z)
\end{equation*}
we obtain
\begin{equation*}
\oint_C f(\phi,z)h(z)\frac{\D z}{2\pi\,\I}=\int_{-\infty-\I 0^+}^{\infty-\I 0^+} f(\phi,z)\,\D z
+\int_{-\infty-\I 0^+}^{\infty-\I 0^+}2f(\phi,z)h(z)\frac{\D z}{2\pi\,\I}\,.
\end{equation*}
Consider the integral in the first term:
\begin{equation*}
\begin{split}
I_\phi=&\int_{-\infty-\I 0^+}^{\infty-\I 0^+} f(\phi,z)\,\D z =
\int_{-\infty-\I 0^+}^{\infty-\I 0^+}\frac{\phi^{2m+2}}{2\sinh^{2m+2}\left(z\frac{\phi}{2}\right)}\,\D z\\
&=\frac{\phi^{2m+1}}{2}|\phi|\E^{\I\arg\phi}\int_{-\infty-\I 0^+}^{\infty-\I 0^+}
    \frac{\D z}{\sinh^{2m+2}\left(z\frac{|\phi|}{2}\E^{\I\arg\phi}\right)},
\end{split}
\end{equation*}
if we change variable to $\tilde{z}=z\exp(\I\arg\phi)$ first and then
to $y=\tilde{z}|\phi|/2$ we obtain
\begin{equation}
\label{eq:ArgIntDiag}
\begin{split}
I_\phi=&\frac{\phi^{2m+1}}{2}|\phi|\int_{(-\infty-\I 0^+)\E^{\I\arg\phi}}^{(\infty-\I 0^+)\E^{\I\arg\phi}} 
    \frac{\D \tilde{z}}{\sinh^{2m+2}\left(\tilde{z}\frac{|\phi|}{2}\right)}\\
    =&\phi^{2m+1}\int_{(-\infty-\I 0^+)\E^{\I\arg\phi}}^{(\infty-\I 0^+)\E^{\I\arg\phi}} 
    \frac{\D y}{\sinh^{2m+2}(y)}.
\end{split}
\end{equation}
\begin{figure}[t!hb]
\centering
\includegraphics[width=0.4\columnwidth]{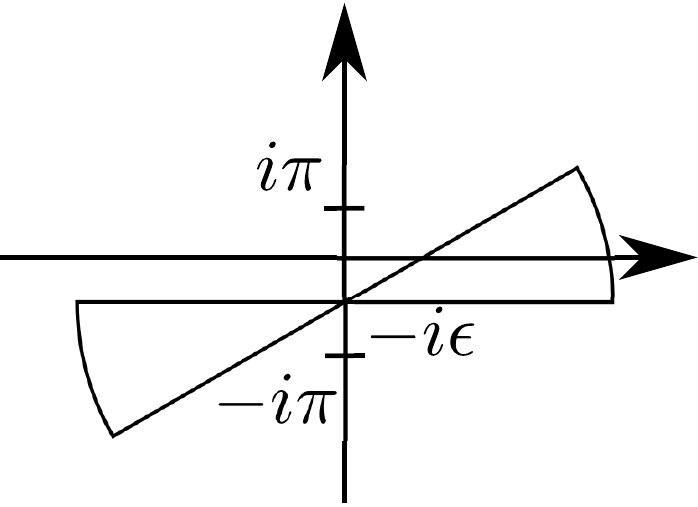}
\caption{The integral~(\ref{eq:ArgIntDiag}), corresponding to the diagonal line,is transformed
    into the horizontal path of the integral~(\ref{eq:ArgIntHor}) through this closed path which does not contains poles of the integrand.}
\label{fig:ArgPath}
\end{figure}
Since the integrand function has poles only in the imaginary axis of $y$, we can use the closed
path in Fig.~\ref{fig:ArgPath} to transform the integral path into the horizontal one.
Indeed, the integrand does not contain poles inside the path and is vanishing in the arcs.
Noticing that the direction of the path changes accordingly to the sign $\sigma=$sgn$(\Re(\phi))$,
we can write
\begin{equation}
\label{eq:ArgIntHor}
\begin{split}
I_\phi =&\phi^{2m+1}\sigma \int_{-\infty-\I 0^+}^{\infty-\I 0^+}\frac{\D y}{\sinh^{2m+2}(y)}.
\end{split}
\end{equation}
To obtain the result we iteratively use the formula \cite[Formula 1 \S 1.4.5, p. 146]{book:PrudVol1}
\begin{equation*}
\int \frac{\D x}{\sinh^p x}=-\frac{1}{p-1}\frac{\cosh x}{\sinh^{p-1}x}+\frac{2-p}{p-1}\int \frac{\D x}{\sinh^{p-2} x}
\end{equation*}
and we finally get
\begin{equation*}
I_\phi =\int_{-\infty-\I 0^+}^{\infty-\I 0^+} f(\phi,z)\,\D z=(-1)^{m+1}\frac{4\sigma}{m+2}\phi^{2m+1}.
\end{equation*}
At last we are left with the complex integral:
\begin{equation*}
I_{2m+2}(\phi)=\int_{-\infty-\I 0^+}^{\infty-\I 0^+}2f(\phi,z)h(z)\frac{\D z}{2\pi\,\I}
=\int_{-\infty-\I 0^+}^{\infty-\I 0^+}
    \frac{\phi^{2m+2}}{\sinh^{2m+2}\left(z\frac{\phi}{2}\right)}\frac{1}{\E^{2\pi\,\I\, z}-1}\D z.
\end{equation*}
Again, we can compute the integral with residue theorem by closing the path with a semicircle
in the lower half-plane which have vanishing contribution. In the lower-half complex plane the $f$
has infinite many poles and the residue theorem gives
\begin{equation*}
I_{2m+2}(\phi)=
\sum_{n=1}^\infty \text{Res}\left[2h(z)f(\phi,z),z=-\frac{2\pi\I n}{\phi}\right].
\end{equation*}
Evaluating the residue at fixed $n$ we realize that the sum over all $n$ reproduces
a linear combination the hyperbolic series with argument $4\pi^2/\phi$ and of order from $2m+2$ to $2$.
More precisely, the residue is the sums of these hyperbolic series each one weighted with a
an even polynomial $\mathcal{S}_i^{(m)}(\phi)$ of degree $2i+2$:
\begin{equation*}
I_{2m+2}(\phi)=\sum_{i=0}^m\mathcal{S}_i^{(m)}(\phi)S_{2i+2}\left(\frac{4\pi^2}{\phi}\right).
\end{equation*}
For instance, for $m=0$ we obtain
\begin{equation*}
I_{2}(\phi)=\sum_{n=1}^\infty \frac{2\pi^2}{\sinh^2(2\pi^2 n/\phi)}
=\frac{\phi^2}{4 \pi^2}S_2\left(\frac{4\pi^2}{\phi}\right),
\end{equation*}
hence the hyperbolic series $S_2$ satisfies the functional equation
\begin{equation*}
S_2(\phi)+\frac{\phi^2}{4 \pi^2}S_2\left(\frac{4\pi^2}{\phi}\right)
    =\frac{\phi^2}{6}+\frac{2 \pi^2}{3}-2\phi.
\end{equation*}
For $m=1$ we have
\begin{equation*}
\begin{split}
I_{4}(\phi)=&-\sum_{n=1}^\infty\left[ \frac{16\pi^4}{\sinh^4(2\pi^2 n/\phi)}
+\left(\frac{8\pi^2\phi^2}{3}+\frac{32\pi^4}{3} \right) \frac{1}{\sinh^2(2\pi^2 n/\phi)}\right]\\
=&-\frac{\phi^4}{16 \pi^4}S_4\left(\frac{4\pi^2}{\phi}\right)
    -\left(\frac{\phi^4}{6 \pi^2}+\frac{2\phi^2}{3}\right)S_2\left(\frac{4\pi^2}{\phi}\right),
\end{split}
\end{equation*}
which brings to the functional equation
\begin{equation*}
S_4(\phi)-\frac{\phi^4}{16 \pi^4}S_4\left(\frac{4\pi^2}{\phi}\right)
    -\left(\frac{\phi^4}{6 \pi^2}+\frac{2\phi^2}{3}\right)S_2\left(\frac{4\pi^2}{\phi}\right)
    =-\frac{11 \phi^4}{90}-\frac{4 \pi^2 \phi^2}{9}+\frac{8 \pi^4}{45}+\frac{4\phi^3}{3}.
\end{equation*}
\end{proof}
The previous proof of the functional relation (\ref{eq:funcrelS}) does not provide a good
method to derive the polynomials $\mathcal{S}_i^{(m)}$. The linearity between Lambert series
and the hyperbolic series provides a better method to derive those polynomials.
To establish the connection between the two methods, we first need to show that the polynomial derived with the
residue (\ref{eq:PolynB}) coincides with the polynomial obtained with the asymptotic
expansion of the hyperbolic series.
\begin{proposition}\label{prop:AsymS}
The asymptotic expansions in $\phi=0$ of the hyperbolic series are $S_{2m+2}(\phi)\sim A_{2m+2}(\phi)$
and $S^{(\sinh,1)}_{2m+2}(\phi)\sim A^{(\sinh,1)}_{2m+2}(\phi)$ where $A_{2m+2}(\phi)$ and
$A^{(\sinh,1)}_{2m+2}(\phi)$ are the polynomials
\begin{equation}
\label{eq:AsymS}
\begin{split}
A_{2m+2}(\phi)=&\sum_{i=0}^m \frac{2^{2m+1}B_{2i+2}}{(2i+2)!}
    \frac{B_{2m-2i}^{(2m+2)}(m+1)}{(2m-2i)!}\phi^{2(m-i)}\left(\phi^{2i+2}-(2\pi\I)^{2i+2}\right)
\end{split}
\end{equation}
and
\begin{equation}
\label{eq:AsymSSinh}
\begin{split}
A^{(\sinh,1)}_{2m+2}(\phi)=& -\frac{2^{2m+1}}{(2m)!}\sum_{i=0}^{m} d_{2i}^{(m)}
    \psi^{(2i)}(1)\phi^{2m+1-2i}\\
    &-\frac{2^{2m+1}}{(2m)!}\sum_{i=0}^{m}\sum_{k=0}^{\infty} d_{2i}^{(m)}
    \frac{B_{2i+2k}B_{2k}}{(2i+2k)(2k)!}\phi^{2m+2k+1}.
\end{split}
\end{equation}
\end{proposition}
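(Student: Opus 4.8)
The plan is to read off both expansions directly from the linear relations of Proposition~\ref{prop:LambertRel}, feeding in the known $\phi\to0$ behaviour of the Lambert series. Throughout I would work on the positive real axis ($\sigma=+1$) and recover the general case at the end from the fact that $S_{2m+2}$ is even and $S^{(\sinh,1)}_{2m+2}$ is odd in $\phi$.

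For $S_{2m+2}$ I would start from (\ref{eq:LambertRel_1}) and insert the odd-argument Lambert expansion already used in the proof of Theorem~\ref{thm:funcrelLambert}, namely $\mathcal{L}_{\E^{-\phi}}(2i+1)\sim \tfrac12\tfrac{B_{2i+2}}{2i+2}\bigl[1+(-1)^i(2\pi/\phi)^{2i+2}\bigr]-\delta_{i,0}/(2\phi)$. Multiplying the $i$-th term by $\tfrac{(2\phi)^{2m+2}}{(2m+1)!}c_{2i+1}^{(m)}$ and substituting $c_{2i+1}^{(m)}=\tfrac{(2m+1)!}{(2i+1)!}\tfrac{B^{(2m+2)}_{2m-2i}(m+1)}{(2m-2i)!}$ from Proposition~\ref{prop:relCBernGen}, the prefactor collapses to $\tfrac{2^{2m+1}B_{2i+2}}{(2i+2)!}\tfrac{B^{(2m+2)}_{2m-2i}(m+1)}{(2m-2i)!}$ while the bracket becomes $\phi^{2m+2}+(-1)^i(2\pi)^{2i+2}\phi^{2m-2i}$. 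Using the elementary identity $(-1)^i(2\pi)^{2i+2}=-(2\pi\I)^{2i+2}$ and factoring out $\phi^{2(m-i)}$, this is exactly the $i$-th summand of $A_{2m+2}$ in (\ref{eq:AsymS}); summing over $i$ reproduces $A_{2m+2}$. The $\delta_{i,0}/(2\phi)$ term survives only for $i=0$ and contributes $\propto\sigma\phi^{2m+1}=|\phi|^{2m+1}$, an even but non-smooth function that is not a two-sided power of $\phi$ and hence does not enter the polynomial $A_{2m+2}$; it is precisely the non-analytic term isolated in (\ref{eq:funcrelS}). For this half the same conclusion can also be read off directly from (\ref{eq:funcrelS}), since $S_{2i+2}(4\pi^2/\phi)\to0$ exponentially as $\phi\to0$.

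For $S^{(\sinh,1)}_{2m+2}$ I would argue analogously from (\ref{eq:LambertRel_2}), but the relevant input is now the asymptotics of the \emph{even}-argument series $\mathcal{L}_{\E^{-\phi}}(2i)$. Since the summation theorem is unavailable here, I would take this from the Mellin/resurgent representation (\ref{eq:LambertResurgent}); collecting the pole at $w=2i+1$ together with the residues at $w=1$ and at the non-positive integers (which, using $\zeta(1-2i)=-B_{2i}/2i$ and $B_0=1$, assemble into a single sum) gives the compact analytic part $\mathcal{L}_{\E^{-\phi}}(2i)\sim (2i)!\,\zeta(2i+1)\phi^{-2i-1}-\sum_{k\ge0}\tfrac{B_{2k}B_{2i+2k}}{(2k)!\,(2i+2k)}\phi^{2k-1}$, with $\mathcal{S}_\pm$ and the $\mathcal{L}_{\E^{-4\pi^2/\phi}}$ term carrying the non-analytic remainder. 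Multiplying by $\tfrac{(2\sigma\phi)^{2m+1}\phi}{(2m)!}d_{2i}^{(m)}$ and rewriting the leading coefficient through $\psi^{(2i)}(1)=-(2i)!\,\zeta(2i+1)$, the $\phi^{-2i-1}$ term reproduces the finite $\psi^{(2i)}(1)$ sum of (\ref{eq:AsymSSinh}) and the remaining sum reproduces its infinite $\tfrac{B_{2i+2k}B_{2k}}{(2i+2k)(2k)!}$ sum termwise.

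The main obstacle is the even-argument half. Because the summation-theorem derivation breaks down for $\mathcal{L}_{\E^{-\phi}}(2i)$, the whole expansion $A^{(\sinh,1)}_{2m+2}$ rests on an independently justified (Mellin or resurgent) asymptotics, and assembling its analytic part into the compact single-sum form above requires combining the $w=1$ pole with the infinitely many $\Gamma$-poles through the shift that turns $(2j+1)!(2j+2)$ into $(2j+2)!$, so that the final comparison is an identity between two infinite series rather than a finite check. A secondary point that must be handled carefully in both halves is the clean separation of the analytic polynomial part—what (\ref{eq:AsymS}) and (\ref{eq:AsymSSinh}) record—from the non-analytic, $\sigma$-dependent contributions (the $\delta_{i,0}$ term for $S_{2m+2}$, the completion $\mathcal{S}_\pm$ for $S^{(\sinh,1)}_{2m+2}$), which do not belong to $A_{2m+2}$ or $A^{(\sinh,1)}_{2m+2}$.
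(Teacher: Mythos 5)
Your proposal is correct in substance, and it is worth contrasting with the paper's proof, which routes everything through the $q$-polygamma function: the paper rewrites (\ref{eq:LambertRel_1}) as (\ref{eq:SqPolyg}) and applies the expansion (\ref{exppolygammaesp}) of \cite{2016Banerjee} uniformly, with $s=2i+1$ for $S_{2m+2}$ and $s=2i$ for $S^{(\sinh,1)}_{2m+2}$. For the first half you do precisely what the paper mentions as an alternative and then sidesteps — substitute the odd-argument Lambert asymptotics (the right-hand side of (\ref{eq:funcrelL}) minus its transformed Lambert term) directly into (\ref{eq:LambertRel_1}) — and your algebra reproducing the $i$-th summand of (\ref{eq:AsymS}) checks out. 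Your handling of the $-\delta_{i,0}/(2\phi)$ term is in fact cleaner than the paper's: in the $q$-polygamma route that term is the $k=0$ summand of (\ref{exppolygammaesp}) at $s=1$ (where $B_{s+k}B_k=B_1B_0\neq 0$), which the paper's simplification ``only $k=1$ survives'' silently discards, whereas you identify it as the non-analytic $(-1)^{m+1}\tfrac{4\sigma}{m+2}\phi^{2m+1}$ term of (\ref{eq:funcrelS}) and explain its exclusion from the even polynomial $A_{2m+2}$. For the second half your input is genuinely different: instead of citing (\ref{exppolygammaesp}) with even $s$, you collect Mellin poles at $w=2i+1$, $w=1$ and the negative odd integers, noting that the $w=1$ residue $\zeta(1-2i)/\phi=-B_{2i}/(2i\,\phi)$ supplies the $k=0$ term of the infinite sum; this matches (\ref{eq:AsymSSinh}) after multiplication by $(2\sigma\phi)^{2m+1}\phi\,d_{2i}^{(m)}/(2m)!$ and $\psi^{(2i)}(1)=-(2i)!\,\zeta(2i+1)$. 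The only caveat — which you flag yourself — is that (\ref{eq:LambertResurgent}) as displayed contains just a finite power sum (with a $0\cdot\infty$ limit at $k=0$), so the infinite Bernoulli tail must come from the asymptotics of the completion $\mathcal{S}_\pm$; a self-contained version of your argument should therefore run the Mellin contour-shift computation itself rather than lean on that displayed formula. In short: the paper's route buys brevity and uniformity from a single citation, while yours makes the analytic/non-analytic split explicit and avoids the $q$-polygamma detour, at the cost of extra pole-collection work in the even case.
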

\begin{proof}
We again use the linearity relation (\ref{eq:LambertRel_1}):
\begin{equation*}
S_{2m+2}(\phi)=\frac{(2\phi)^{2m+2}}{(2m+1)!}\sum_{i=0}^{m} c_{2i+1}^{(m)}\,\mathcal{L}_{\E^{-\phi}}(2i+1).
\end{equation*}
We could just replace the asymptotic expansion of the Lambert series~\cite{2016Banerjee}
in the previous equation but it is more convenient to use the asymptotic expansion of the q-polygamma instead.
From the relation of Lambert series and q-polygamma function
\begin{equation*}
\mathcal{L}_q(s)=\frac{1}{(\log q)^{s+1}} \psi^{(s)}_q(1)
\end{equation*}
the hyperbolic series is
\begin{equation}
\label{eq:SqPolyg}
S_{2m+2}(\phi)=\frac{2^{2m+2}}{(2m+1)!}\sum_{i=0}^{m} c_{2i+1}^{(m)}\,
    \psi^{(2i+1)}_{\E^{-\phi}}(1) \phi^{2m-2i}.
\end{equation}
The asymptotic expansion of q-polygamma at $\phi=0$ for $s\geq 1$ is~\cite{2016Banerjee}
\begin{equation}
\label{exppolygammaesp}
\psi^{(s)}_{\E^{-\phi}}(1)\sim\psi^{(s)}(1)-\sum_{k=0}^\infty (-1)^{k+1}\frac{B_{s+k}B_k}{(s+k)k!}\phi^{s+k}.
\end{equation}
If we choose $s$ as an odd integer $s=2i+1$, since the only non vanishing odd Bernoulli number is
$B_1$, we simply have:
\begin{equation*}
\psi^{(2i+1)}_{\E^{-\phi}}(1)\sim\psi^{(2i+1)}(1)+\frac{B_{2i+2}}{2(2i+2)}\phi^{2i+2}\quad i\geq 0.
\end{equation*}
The polygamma of argument $1$ is
\begin{equation*}
\begin{split}
\psi^{(2i+1)}(1)=&\int_0^1 \frac{(\log t)^{2i+1}}{t-1}\D t =(-1)^{2i+2}\Gamma(2i+2)\zeta(2i+2)\\
=&(-1)^{i+2}\frac{B_{2i+2}(2\pi)^{2i+2}}{2(2i+2)};
\end{split}
\end{equation*}
then the asymptotic expansion of a q-polygamma with odd integer argument is
\begin{equation}
\label{qpolygammadisp}
\psi^{(2i+1)}_{\E^{-\phi}}(1)\sim\frac{B_{2i+2}}{2(2i+2)}\left(\phi^{2i+2}-(2\pi\I)^{2i+2}\right).
\end{equation}
Plugging the (\ref{qpolygammadisp}) in the (\ref{eq:SqPolyg}) and using (\ref{eq:relBernGen}) we obtain the polynomial (\ref{eq:AsymS}).

The other hyperbolic series is related to q-polygamma functions of positive even integer whose
asymptotic expansion does not stop at a finite power of $\phi$.
From (\ref{eq:LambertRel_2}) the hyperbolic series is given by
\begin{equation*}
S^{(\sinh,1)}_{2m+2}(\phi)=-\frac{2^{2m+1}}{(2m)!}\sum_{i=0}^{m} d_{2i}^{(m)}
    \psi^{(2i)}_{\E^{-\phi}}(1)\phi^{2m+1-2i}.
\end{equation*}
Then using (\ref{exppolygammaesp}) we obtain (\ref{eq:AsymSSinh}).
\end{proof}
\begin{proposition}\label{prop:AltPolB}
The polynomials $A_{2m+2}$ in (\ref{eq:AsymS}) are exactly the same polynomials $\mathcal{B}_{2m+2}$
in (\ref{eq:PolynB}).
\end{proposition}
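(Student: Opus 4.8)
The plan is to expand the product $\phi^{2(m-i)}\left(\phi^{2i+2}-(2\pi\I)^{2i+2}\right)$ appearing in (\ref{eq:AsymS}), thereby splitting $A_{2m+2}(\phi)$ into a single term proportional to $\phi^{2m+2}$ and a remaining sum carrying the factors $(2\pi\I)^{2i+2}$. Writing $A_{2m+2}(\phi)=A_1+A_2$ with
\begin{equation*}
A_1 = 2^{2m+1}\phi^{2m+2}\sum_{i=0}^m \frac{B_{2i+2}}{(2i+2)!}\frac{B_{2m-2i}^{(2m+2)}(m+1)}{(2m-2i)!},
\end{equation*}
\begin{equation*}
A_2 = -2^{2m+1}\sum_{i=0}^m \frac{B_{2i+2}}{(2i+2)!}\frac{B_{2m-2i}^{(2m+2)}(m+1)}{(2m-2i)!}(2\pi\I)^{2i+2}\phi^{2(m-i)},
\end{equation*}
I would treat the two pieces separately and match each against a portion of the sum defining $\mathcal{B}_{2m+2}(\phi)$ in (\ref{eq:PolynB}).

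For $A_2$ the argument is purely a change of summation index. Setting $k=i+1$ sends $2i+2\mapsto 2k$, $2m-2i\mapsto 2m+2-2k$, $\phi^{2(m-i)}\mapsto\phi^{2m+2-2k}$, and $(2\pi\I)^{2i+2}\mapsto(2\pi\I)^{2k}$, so $A_2$ reproduces exactly the terms $k=1,\dots,m+1$ in (\ref{eq:PolynB}). This part is bookkeeping and carries no difficulty.

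The crux is $A_1$, where the entire $\phi$-dependence has collapsed onto $\phi^{2m+2}$ and the coefficient is the $\phi$-independent sum $\sum_{i=0}^m \frac{B_{2i+2}}{(2i+2)!}\frac{B_{2m-2i}^{(2m+2)}(m+1)}{(2m-2i)!}$. Applying the same shift $k=i+1$ rewrites it as $\sum_{k=1}^{m+1}\frac{B_{2k}}{(2k)!}\frac{B_{2m+2-2k}^{(2m+2)}(m+1)}{(2m+2-2k)!}$, which is precisely the left-hand side of the vanishing identity (\ref{eq:IdentZeros}) with the $k=0$ term deleted. Because $B_0=1$, that identity forces the truncated sum to equal $-\frac{B_{2m+2}^{(2m+2)}(m+1)}{(2m+2)!}$, and hence $A_1 = -2^{2m+1}\frac{B_{2m+2}^{(2m+2)}(m+1)}{(2m+2)!}\phi^{2m+2}$, which is exactly the $k=0$ term of $\mathcal{B}_{2m+2}(\phi)$.

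Adding $A_1$ and $A_2$ then reassembles the full range $k=0,1,\dots,m+1$ of (\ref{eq:PolynB}), giving $A_{2m+2}(\phi)=\mathcal{B}_{2m+2}(\phi)$. The only nontrivial ingredient is the identity (\ref{eq:IdentZeros}), which supplies the otherwise missing $k=0$ contribution; everything else reduces to reindexing and the splitting of the binomial factor $\phi^{2i+2}-(2\pi\I)^{2i+2}$. I expect this final matching to be routine once the vanishing identity is invoked, so the main point to verify carefully is that the reindexing bounds and the deleted $k=0$ term align as claimed.
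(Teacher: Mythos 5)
Your proof is correct and is essentially the paper's own argument run in reverse: the paper starts from $\mathcal{B}_{2m+2}$ and uses the vanishing identity (\ref{eq:IdentZeros}) to replace the $k=0$ coefficient $B_{2m+2}^{(2m+2)}(m+1)/(2m+2)!$ by the truncated sum, then factors to obtain $A_{2m+2}$, whereas you split $A_{2m+2}$ and use the same identity to regenerate the $k=0$ term. The key ingredient (identity (\ref{eq:IdentZeros})) and the reindexing $k=i+1$ are identical in both directions, so this is the same proof.
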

\begin{proof}
Using the identity (\ref{eq:IdentZeros}), the generalized Bernoulli polynomial appearing in the $k=0$ term
of (\ref{eq:PolynB}) can be written as
\begin{equation*}
\frac{B_{2m+2}^{(2m+2)}(m+1)}{(2m+2)!}=-\sum_{k=1}^{m+1}\frac{B_{2k}\,B_{2m+2-2k}^{(2m+2)}(m+1)}{(2k)!(2m+2-2k)!}
=-\sum_{i=0}^{m}\frac{B_{2i+2}\,B_{2m-2i}^{(2m+2)}(m+1)}{(2i+2)!(2m-2i)!}.
\end{equation*}
If we replace it in (\ref{eq:PolynB}) we obtain
\begin{equation*}
\begin{split}
\mathcal{B}_{2m+2}(\phi)=&\sum_{i=0}^m \frac{2^{2m+1}B_{2i+2}}{(2i+2)!}
    \frac{B_{2m-2i}^{(2m+2)}(m+1)}{(2m-2i)!}\phi^{2(m-i)}\left(\phi^{2i+2}-(2\pi\I)^{2i+2}\right)
\end{split}
\end{equation*}
which is $A_{2m+2}(\phi)$ in (\ref{eq:AsymS}).
\end{proof}
\begin{corollary}
For every integer $m\geq 0$, the polynomials $\mathcal{B}_{2m+2}(\phi)$ have two zeros
in $\phi=\pm 2\pi\I$.
\end{corollary}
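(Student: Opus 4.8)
The plan is to use the factored representation of $\mathcal{B}_{2m+2}$ established just above in Proposition~\ref{prop:AltPolB}, which identifies $\mathcal{B}_{2m+2}(\phi)=A_{2m+2}(\phi)$ with $A_{2m+2}$ given in~(\ref{eq:AsymS}). The decisive feature of that expression is that every summand carries the factor $\phi^{2i+2}-(2\pi\I)^{2i+2}$, so the polynomial manifestly vanishes at any $\phi$ for which this factor is zero for all $i$ simultaneously.

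First I would substitute $\phi=2\pi\I$ directly into~(\ref{eq:AsymS}). For each index $i$ the bracket becomes $(2\pi\I)^{2i+2}-(2\pi\I)^{2i+2}=0$, so every term of the sum drops out and $A_{2m+2}(2\pi\I)=0$. Next I would treat $\phi=-2\pi\I$: since the exponent $2i+2$ is even, $(-2\pi\I)^{2i+2}=(2\pi\I)^{2i+2}$, hence each bracket again vanishes and $A_{2m+2}(-2\pi\I)=0$. By Proposition~\ref{prop:AltPolB} these are precisely the values of $\mathcal{B}_{2m+2}$ at $\phi=\pm 2\pi\I$, which yields the two claimed zeros.

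There is essentially no obstacle here: once the factored form~(\ref{eq:AsymS}) is in hand, the conclusion is immediate and requires only the elementary observation that $2i+2$ is even. The one point worth flagging is that the whole argument leans on having already rewritten $\mathcal{B}_{2m+2}$ as $A_{2m+2}$. Working directly from the defining expression~(\ref{eq:PolynB}) the cancellation is not visible term by term; one would instead have to invoke the identity~(\ref{eq:IdentZeros}) to regroup the $k=0$ contribution, which is exactly the manipulation packaged into the proof of Proposition~\ref{prop:AltPolB}. Thus the entire difficulty of this corollary has been absorbed into that earlier proposition, and what remains is a one-line evaluation.
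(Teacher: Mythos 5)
Your proof is correct and is precisely the argument the paper intends: the corollary follows immediately from Proposition~\ref{prop:AltPolB}, since every summand of $A_{2m+2}(\phi)$ in~(\ref{eq:AsymS}) carries the factor $\phi^{2i+2}-(2\pi\I)^{2i+2}$, which vanishes at $\phi=\pm 2\pi\I$ because the exponent is even. You also correctly identify that the real work lives in Proposition~\ref{prop:AltPolB} (via the identity~(\ref{eq:IdentZeros})), exactly as in the paper.
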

%
\begin{figure}[t!hb]
\centering
\includegraphics[width=\columnwidth]{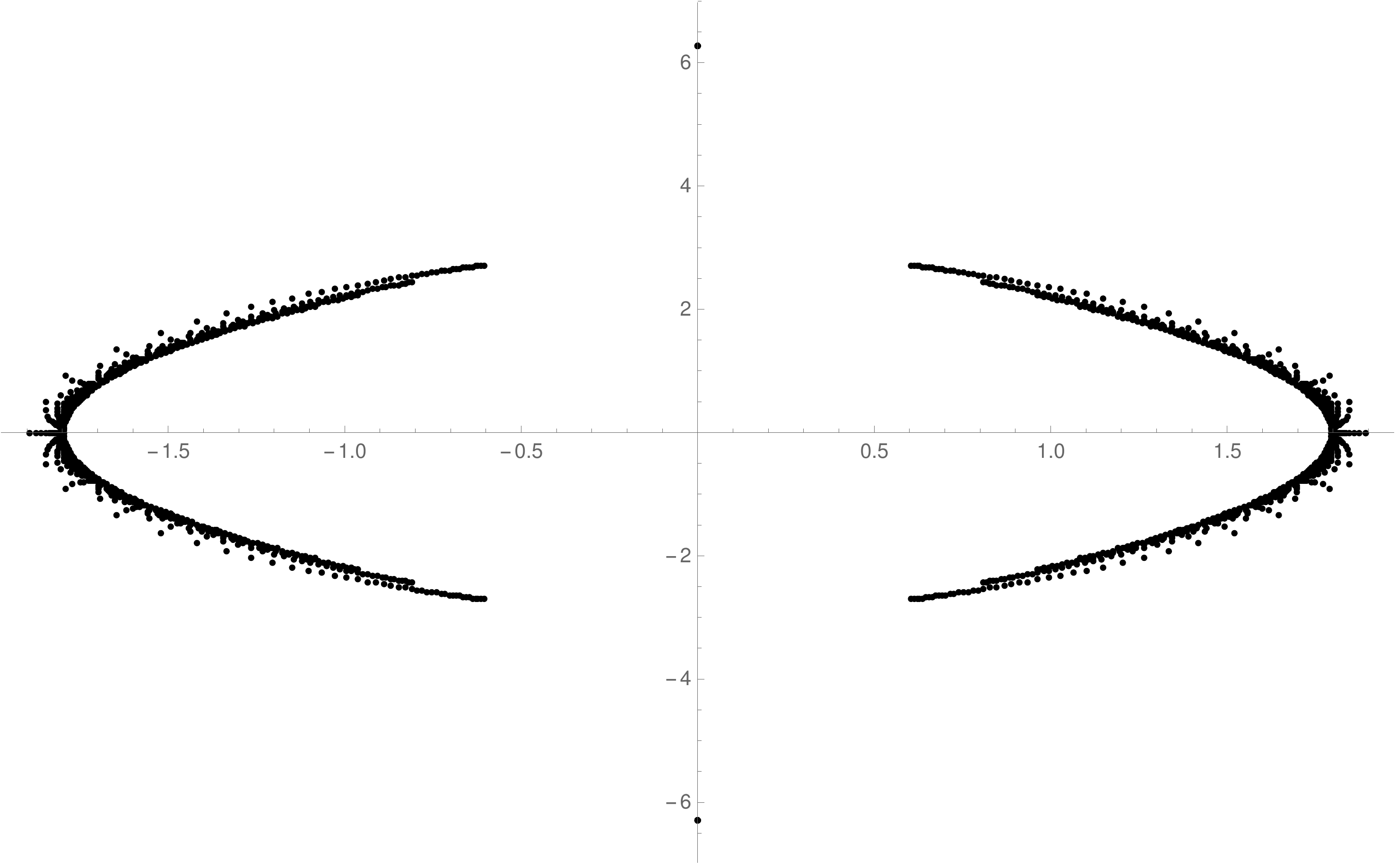}
\caption{The zeros of the $B_{2m+2}$ polynomials for $m$ from $0$ to $40$. }
\label{fig:Zeros}
\end{figure}
Contrary to Ramanujan polynomials, the $\mathcal{B}$ polynomials are not reciprocal and
because of that we expect that the study of their zeros would be more difficult than
those of Ramanujan polynomials. Furthermore, we can verify that contrary to Ramanujan's
polynomials~\cite{ZerosRam}, the zeros of $\mathcal{B}$ do not lye on the unitary circle
and they do not seem to have other imaginary zeros except for $\phi=\pm 2\pi\I$, see Fig.~\ref{fig:Zeros}.
In the physics of accelerating fluids, the zeros in $\phi=\pm 2\pi\I$ are a direct consequence
of the Unruh effect~\cite{Crispino:2007eb}. Indeed, the thermal functions of an accelerating
fluid composed by free Klein-Gordon massless spin $0$ particles are given by the $\mathcal{B}$
polynomials and are obtained by an analytic extraction of the hyperbolic series into the imaginary
axis~\cite{Becattini:2020qol}. The zeros in $\phi=\pm 2\pi\I$ correspond to the vanishing
of the thermal functions at the Unruh Temperature $T_U=a/2\pi$~\cite{Becattini:2020qol}.

We can now give the proof of theorem \ref{thm:funcrelS} that provides an easier method
to derive the polynomial $\mathcal{S}$.
\begin{proof}[Proof of theorem \ref{thm:funcrelS} from Lambert series]
To prove the functional relation (\ref{eq:funcrelS}) we take advantage of the
linearity relation between $S_{2m+2}$ and the Lambert series and the Lambert
functional equation (\ref{eq:funcrelL}). For the sake of clarity suppose $\Re(\phi)>0$,
then from proposition~\ref{prop:LambertRel} we have
\begin{equation}
\label{eq:proofSwithL}
S_{2m+2}(\phi)=\frac{(2\phi)^{2m+2}}{(2m+1)!}\sum_{i=0}^{m} c_{2i+1}^{(m)}\,\mathcal{L}_{\E^{-\phi}}(2i+1),
\end{equation}
where from proposition~\ref{prop:relCBernGen} the coefficient $c_{2i+1}$ is given by
\begin{equation*}
c_{2i+1}^{(m)}=\frac{(2m+1)!}{(2i+1)!}\frac{B_{2m-2i}^{(2m+2)}(m+1)}{(2m-2i)!}.
\end{equation*}
If we define
\begin{equation*}
L(2i+2)=\frac{1}{2}\frac{B_{2i+2}}{2i+2}\left[1+\frac{(-1)^i(2\pi)^{2i+2}}{\phi^{2i+2}}\right],
\end{equation*}
then the Lambert functional relation is written as
\begin{equation}
\label{eq:proofFunRelL}
\mathcal{L}_{\E^{-\phi}}(2i+1)=L(2i+2)-\frac{\delta_{i,0}}{2\phi}
-(-1)^i\left(\frac{2\pi}{\phi}\right)^{2i+2}\mathcal{L}_{\E^{-4\pi^2/\phi}}(2i+1).
\end{equation}
Then plugging (\ref{eq:proofFunRelL}) in (\ref{eq:proofSwithL}) we obtain
\begin{equation}
\label{eq:almostRel}
\begin{split}
S_{2m+2}(\phi)=&A_{2m+2}(\phi)+(-1)^{m+1}\frac{4}{m+2}\phi^{2m+1}\\ &-\frac{(2\phi)^{2m+2}}{(2m+1)!}
    \sum_{i=0}^m c_{2i+1}^{(m)}\frac{(-1)^i (2\pi)^{2i+2}}{\phi^{2i+2}}\mathcal{L}_{\E^{-4\pi^2/\phi}}(2i+1).
\end{split}
\end{equation}
Since the asymptotic expansion in $\phi=0$ of the Lambert series with positive odd argument coincide with
$L(2i+2)$~\cite{2016Banerjee}, then the polynomial $A_{2m+2}$ is exactly the one we calculated
in Proposition~\ref{prop:AsymS}. We already showed in Proposition~\ref{prop:AltPolB} that this polynomial
coincide with the one found with the residue: $A_{2m+2}(\phi)=\mathcal{B}_{2m+2}(\phi)$.
To write the (\ref{eq:almostRel}) only in terms of the hyperbolic series we invert the equation
(\ref{eq:proofSwithL}) to find the Lambert series appearing in (\ref{eq:almostRel}). The result is
\begin{equation}
\label{eq:InvertionL}
\mathcal{L}_{\E^{-4\pi^2/\phi}}(2i+1)=\sum_{k=0}^i \mathcal{B}_k^{(i)}\phi^{2k+2}
    S_{2k+2}\left(\frac{4\pi^2}{\phi}\right),
\end{equation}
where $\mathcal{B}_k^{(i)}$ are real numbers. Inserting the relation (\ref{eq:InvertionL}) in
(\ref{eq:almostRel}) we obtain
\begin{equation*}
S_{2m+2}(\phi)-\sum_{i=0}^m\mathcal{S}_i^{(m)}(\phi)S_{2i+2}\left(\frac{4\pi^2}{\phi}\right)=
\mathcal{B}_{2m+2}(\phi)+(-1)^{m+1}\frac{4\sigma}{m+2}\phi^{2m+1},
\end{equation*}
where the polynomials $\mathcal{S}$ are inferred by the equality
\begin{equation}
\label{eq:PolSDef}
\begin{split}
\sum_{i=0}^m\mathcal{S}_i^{(m)}(\phi)S_{2i+2}\left(\frac{4\pi^2}{\phi}\right)=&-\frac{(2\phi)^{2m+2}}{(2m+1)!}
    \sum_{i=0}^m \left[c_{2i+1}^{(m)}\frac{(-1)^i (2\pi)^{2i+2}}{\phi^{2i+2}}\right.\\
    &\left.\times\sum_{k=0}^i \mathcal{B}_k^{(i)}\phi^{2k+2} S_{2k+2}\left(\frac{4\pi^2}{\phi}\right)\right].
\end{split}
\end{equation}
\end{proof}
With this proof we have a simple procedure to derive the exact form of functional relation of hyperbolic
series (\ref{eq:funcrelS}) for any chosen $m$.
The numbers $\mathcal{B}^{(i)}_k$ in Eq.~(\ref{eq:InvertionL}) are derived inverting
Eq.~(\ref{eq:proofSwithL}):
\begin{align*}
\mathcal{B}^{(0)}_0=&\frac{1}{64\pi^4},\\
\mathcal{B}^{(1)}_0=&\frac{1}{64 \pi^4},\,\mathcal{B}^{(1)}_1=\frac{3}{2048 \pi^8};\\
\mathcal{B}^{(2)}_0=&\frac{1}{64 \pi^4},\,\mathcal{B}^{(2)}_1=\frac{15}{2048 \pi^8},\,
    \mathcal{B}^{(2)}_2=\frac{15}{32768 \pi^{12}};\\
\mathcal{B}^{(3)}_0=&\frac{1}{64 \pi^4},\,\mathcal{B}^{(3)}_1=\frac{63}{2048 \pi^8},\,
    \mathcal{B}^{(3)}_2=\frac{105}{16384 \pi^{12}},\,\mathcal{B}^{(3)}_3=\frac{315}{1048576 \pi^{16}}.
\end{align*}
Once we have the numbers $\mathcal{B}^{(i)}_k$, all the element of the r.h.s of Eq.~(\ref{eq:PolSDef})
are known and we can make the sums and consequently find the $\mathcal{S}$ polynomials:
\begin{align*}
\mathcal{S}^{(0)}_0(\phi)=&-\frac{\phi^2}{4 \pi^2},\\
\mathcal{S}^{(1)}_0(\phi)=&\frac{\phi^4}{6 \pi^2}+\frac{2\phi^2}{3},\,
    \mathcal{S}^{(1)}_1(\phi)=\frac{\phi^4}{16 \pi^4};\\
\mathcal{S}^{(2)}_0(\phi)=&-\frac{2\phi^6}{15\pi^6}-\frac{2\phi^4}{3}-\frac{8\pi^2\phi^2}{15},\,
    \mathcal{S}^{(2)}_1(\phi)=-\frac{\phi^6}{16\pi^4}-\frac{\phi^4}{4\pi^2},\,
    \mathcal{S}^{(2)}_2(\phi)=-\frac{\phi^6}{64 \pi^6};\\
\mathcal{S}^{(3)}_0(\phi)=&\frac{4\phi^8}{35\pi^2}+\frac{28\phi^6}{45}
    +\frac{32\pi^2\phi^4}{45}+\frac{64\pi^4\phi^2}{315},\,
    \mathcal{S}^{(3)}_1(\phi)=\frac{7 \phi^8}{120 \pi^4}+\frac{\phi^6}{3 \pi^2}+\frac{2\phi^4}{5},\\
    &\mathcal{S}^{(3)}_2(\phi)=\frac{\phi^8}{48 \pi^6}+\frac{\phi^6}{12 \pi^4},\,
    \mathcal{S}^{(3)}_3(\phi)=\frac{\phi^8}{256 \pi^8}.
\end{align*}
We also report the $\mathcal{B}_{2m+2}$ polynomials for $m=0,1,2,3$
\begin{align*}
\mathcal{B}_2(\phi)=&\frac{\phi^2}{6}+\frac{2 \pi^2}{3};\\
\mathcal{B}_4(\phi)=&-\frac{11 \phi^4}{90}-\frac{4 \pi^2 \phi^2}{9}+\frac{8 \pi^4}{45};\\
\mathcal{B}_6(\phi)=&\frac{191 \phi^6}{1890}+\frac{16 \pi^2 \phi^4}{45}
    -\frac{8 \pi^4 \phi^2}{45}+\frac{64 \pi^6}{945};\\
\mathcal{B}_8(\phi)=&-\frac{2497 \phi^8}{28350}-\frac{32 \pi^2 \phi^6}{105}
    +\frac{112 \pi^4 \phi^4}{675}-\frac{256 \pi^6 \phi^2}{2835}+\frac{128 \pi^8}{4725}.
\end{align*}
We can then explicitly write the functional equation (\ref{eq:funcrelS}) for a given $m$.
For instance, for $m=0,1$ we have:
\begin{equation*}
S_2(\phi)+\frac{\phi^2}{4 \pi^2}S_2\left(\frac{4\pi^2}{\phi}\right)
    =\frac{\phi^2}{6}+\frac{2 \pi^2}{3}-2\phi;
\end{equation*}
\begin{equation*}
S_4(\phi)-\frac{\phi^4}{16 \pi^4}S_4\left(\frac{4\pi^2}{\phi}\right)
    -\left(\frac{\phi^4}{6 \pi^2}+\frac{2\phi^2}{3}\right)S_2\left(\frac{4\pi^2}{\phi}\right)
    =-\frac{11 \phi^4}{90}-\frac{4 \pi^2 \phi^2}{9}+\frac{8 \pi^4}{45}+\frac{4\phi^3}{3}.
\end{equation*}

\appendix
\section{The residue \texorpdfstring{$\mathcal{B}_{2m+2}(\phi)$}{B 2m+2}}
\label{sec:residue}
In this appendix we evaluate the residue in Eq. (\ref{eq:ResidueDef}), that gives the polynomial $\mathcal{B}_{2m+2}(\phi)$.
From the definition, since the residues is evaluated in a $2m+2$th order pole, we have:
\begin{equation*}
\begin{split}
\mathcal{B}_{2m+2}(\phi)&=-\text{Res}\left[f(\phi,z)h(z)\right]_{z=0}\\
&=-\frac{1}{2(2m+2)!}\lim_{z\to0}\frac{\D^{2m+2}}{\D z^{2m+2}}
\left[ \frac{\phi^{2m+2}z^{2m+3}}{\sinh^{2m+2}\left(z\frac{\phi}{2}\right)}\frac{2\pi\I}{\E^{2\pi\,\I\, z}-1}\right].
\end{split}
\end{equation*}
We define the following functions
\begin{equation*}
F(\phi,z)=\frac{\phi^{2m+2}z^{2m+2}}{\sinh^{2m+2}\left(z\frac{\phi}{2}\right)};\quad
H(z)=\frac{2\pi\I\, z}{\E^{2\pi\,\I\, z}-1};
\end{equation*}
in this way the limit $z\to 0$ is well behaved separtly in each functions and their derivatives.
Next, to perform the derivative in the residue we use the generalized Leibnitz rule:
\begin{equation}
\label{derFH}
\frac{\D^{2m+2}}{\D z^{2m+2}}\left[F(\phi,z) H(z)\right]=
\sum_{k=0}^{2m+2}\genfrac(){0pt}{0}{2m+2}{k}F^{(2m+2-k)}(\phi,z)\, H^{(k)}(z).
\end{equation}

To derive $H(z)$ we recognize the generating functions of Bernoulli numbers in $H$, see Definition \ref{def:bernpoly}:
\begin{equation*}
H(z)=\frac{2\pi\I\, z}{\E^{2\pi\,\I\, z}-1}=\sum _{n=0}^{\infty } \frac{(2\pi\I)^n\, B_n }{n!} z^n
\end{equation*}
from which it immediately follows its derivatives
\begin{equation*}
H^{(k)}(z)=\sum _{n=k}^{\infty } \frac{B_n(2\pi\I)^n }{(n-k)!} z^{n-k}
\end{equation*}
and, particularly, in the limit $z\to 0$ we obtain:
\begin{equation}
\label{derH}
H^{(k)}(0)=B_k(2\pi\I)^k.
\end{equation}
The previous expression holds even for $k=0$ because $H(0)=1$.

Now we evaluate the $s$-th derivative of $F(\phi,z)$. This time we recognize the generating functions
of the generalized Bernoulli polynomials, see Definition \ref{def:bernpoly}:
\begin{equation*}
\begin{split}
F(\phi,z)&=\frac{\phi^{2m+2}z^{2m+2}}{\sinh^{2m+2}\left(z\frac{\phi}{2}\right)}
=\left(\frac{\phi z}{\sinh \left(z\frac{\phi}{2}\right)}\right)^{2m+2}
=\left(\frac{2z\,\phi}{\E^{z\phi}-1}\right)^{2m+2}\E^{z(m+1)\phi}\\
&=2^{2m+2}\sum _{n=0}^{\infty } \frac{B_n^{(2m+2)}(m+1) \phi^n}{n!} z^n.
\end{split}
\end{equation*}
Therefore, the derivatives of $F$ is
\begin{equation*}
F^{(s)}(\phi,z)= 2^{2m+2}\sum_{n=s}^{\infty } \frac{B_n^{(2m+2)}(m+1) \phi^n}{(n-s)!} z^{n-s}
\end{equation*}
and when $z\to 0$ only the term $n=s$ remains
\begin{equation}
\label{derF}
F^{(s)}(\phi,0)=2^{2m+2} B_s^{(2m+2)}(m+1) \phi^s.
\end{equation}
Finally, we plug Eq. (\ref{derH}) and Eq. (\ref{derF}) in Eq. (\ref{derFH}) and we obtain the following polynomial:
\begin{equation*}
\begin{split}
\mathcal{B}_{2m+2}(\phi)&=-\frac{1}{2(2m+2)!}\lim_{z\to0}\frac{\D^{2m+2}}{\D z^{2m+2}}
\left[ \frac{\phi^{2m+2}z^{2m+3}}{\sinh^{2m+2}\left(z\frac{\phi}{2}\right)}\frac{2\pi\I}{\E^{2\pi\,\I\, z}-1}\right]\\
&=-\frac{2^{2m+1}}{(2m+2)!} \sum_{k=0}^{2m+2}\genfrac(){0pt}{0}{2m+2}{k} (2\pi\I)^k \,B_k\,B_{2m+2-k}^{(2m+2)}(m+1)\, \phi^{2m+2-k}.
\end{split}
\end{equation*}
But $B_{2m+2-k}^{(2m+2)}(m+1)$ is the coefficient of a series of an even function, therefore is vanishing for odd $k$ and
we can sum only on even $k$. Furthermore, we write the binomial as
\begin{equation*}
\genfrac(){0pt}{0}{2m+2}{2k}=\frac{(2m+2)!}{(2k)!(2m+2-2k)!}
\end{equation*}
and we obtain
\begin{equation*}
\mathcal{B}_{2m+2}(\phi)=-2^{2m+1}\sum_{k=0}^{m+1}(2\pi\I)^{2k}\frac{B_{2k}}{(2k)!}\frac{B_{2m+2-2k}^{(2m+2)}(m+1)}{(2m+2-2k)!}
	 \, \phi^{2m+2-2k},
\end{equation*}
that is Eq. (\ref{eq:PolynB}).

\subsection{Another form of the residue}
Going back to Eq.~(\ref{derFH}), we can obtain the derivatives of $F(z)$ with an alternative method.
We can look at the function $F$ as the result of two composite functions $F(z)=q(g(z))$, where
\begin{equation*}
q(y)=y^{2m+2};\quad
g(z)=\frac{\phi z }{\sinh \left(z\frac{\phi}{2}\right)}.
\end{equation*}
From this observation, we can use the Faà di Bruno formula for the derivatives of $F$:
\begin{equation*}
F^{(n)}(z)=\frac{\D^n}{\D z^n}q(g(z))=\sum_{s=1}^n q^{(s)}(g(z))
	Y_{n,s}\left(g^{(1)}(z),g^{(2)}(z),\dots,g^{(n-s+1)}(z)\right),
\end{equation*}
where $Y_{n,s}$ is the incomplete Bell polynomial.
The derivative of $q$ is simply the derivative of a power
\begin{equation*}
q^{(s)}(y)=\frac{\D^s}{\D y^s}y^{2m+2}=\frac{(2m+2)!}{(2m+2-s)!}y^{2m+2-s}
\end{equation*}
and hence we find
\begin{equation*}
q^{(s)}(g(z))=\frac{(2m+2)!}{(2m+2-s)!}\frac{\phi^{2m+2-s} z^{2m+2-s} }{\sinh^{2m+2-s} \left(z\frac{\phi}{2}\right)}.
\end{equation*}
In the limit $z\to 0$ this yields
\begin{equation}
\label{derq}
q^{(s)}(g(0))=\frac{2^{2m+2}(2m+2)!}{(2m+2-s)!}.
\end{equation}
The derivatives of $g^{(s)}(z)$ are found with the Bernoulli polynomials
\begin{equation*}
\frac{x\,\E^{t\, x}}{\E^x-1}=\sum _{n=0}^{\infty } \frac{B_n(t) x^n}{n!}
\quad |x|<2\pi
\end{equation*}
which yields
\begin{equation*}
g(z)=\frac{z\,\phi}{\sinh \left(z\frac{\phi}{2}\right)}=\frac{2z\,\phi}{\E^{z\phi/2}-\E^{-z\phi/2}}
=\frac{2z\,\phi\,\E^{z\phi/2}}{\E^{z\phi}-1}
=\sum_{n=0}^\infty \frac{2B_n(1/2)\phi^n}{n!}z^n.
\end{equation*}
The derivatives immediately follow
\begin{equation*}
g^{(s)}(z)=\sum_{n=0}^\infty \frac{2B_n(1/2)\phi^n}{(n-s)!}z^{n-s}.
\end{equation*}
In the limit $z\to 0$, the only non vanishing terms are those with $n=s$ and we have
\begin{equation*}
g^{(s)}(0)=2 B_s\left(1/2\right)\phi^s.
\end{equation*}
Plugging the derivatives of $q$ and $g$ in the Faà di Bruno formula, we obtain the derivative of $F$
\begin{equation}
\label{derFver2}
\begin{split}
F^{(n)}(0)&=\sum_{s=1}^n \frac{2^{2m+2-s}(2m+2)!}{(2m+2-s)!}
	Y_{n,s}\left(g^{(1)}(0),g^{(2)}(0),\dots,g^{(n-s+1)}(0)\right)\\
    &=\phi^n\sum_{s=1}^n \frac{2^{2m+2}(2m+2)!}{(2m+2-s)!}
	Y_{n,s}\left(B_1\left(1/2\right), \dots,B_{n-s+1}\left(1/2\right)\right),    
\end{split}
\end{equation}
where we took advantage of the Bell polynomials properties.
Equating Eq.~(\ref{derFver2}) with Eq.~(\ref{derF}) we establish a relation between
generalized Bernoulli polynomial and ordinary Bernoulli polynomials:
\begin{equation}
\label{eq:AppGenBer}
B_n^{(2m+2)}(m+1)=\sum_{s=1}^n \frac{(2m+2)!}{(2m+2-s)!}
	Y_{n,s}\left(B_1(1/2),B_2(1/2),\dots,B_{n-s+1}(1/2)\right).
\end{equation}
We can also use Eq. (\ref{derFH}) to find an alternative expression of the polynomial $\mathcal{B}$
\begin{equation}
\label{resFaaBruno}
\begin{split}
\mathcal{B}_{2m+2}(\phi)=&-\frac{1}{2}\text{Res}\left[f(z)h(z)\right]_{z=0}\\
=&-\frac{1}{2(2m+2)!}\lim_{z\to0}\frac{\D^{2m+2}}{\D z^{2m+2}}
\left[ \frac{\phi^{2m+2}z^{2m+3}}{\sinh^{2m+2}\left(z\frac{\phi}{2}\right)}\frac{2\pi\I}{\E^{2\pi\,\I\, z}-1}\right]\\
=&- \sum_{k=0}^{2m+2}\sum_{s=1}^{2m+2-k}\genfrac(){0pt}{0}{2m+2}{k}
	\frac{2^{2m+1}(2\pi\I)^k B_k}{(2m+2-s)!}\phi^{2m+2-k}\times\\
    &\times Y_{2m+2-k,s}\left(B_1\left(1/2\right), \dots,B_{2m+2-k-s+1}\left(1/2\right)\right).
\end{split}
\end{equation}
This is used to prove the Proposition~\ref{prop:RelOrdGenBer}.

\providecommand{\bysame}{\leavevmode\hbox to3em{\hrulefill}\thinspace}
\providecommand{\MR}{\relax\ifhmode\unskip\space\fi MR }
\providecommand{\MRhref}[2]{%
	\href{http://www.ams.org/mathscinet-getitem?mr=#1}{#2}
}
\providecommand{\href}[2]{#2}

\end{document}